\numberwithin{equation}{section}
\newtheorem{theorem}{Theorem}[section]
\newtheorem{proposition}[theorem]{Proposition}
\newtheorem{lemma}[theorem]{Lemma}
\theoremstyle{definition}
\newtheorem{definition}[theorem]{Definition}
\newtheorem{remark}[theorem]{Remark}
\newcommand{\M}{\mathcal{M}}
\newcommand{\A}{\mathcal{A}}
\newcommand{\E}{\mathbb{E}}
\newcommand{\R}{\mathbb{R}}
\newcommand{\Z}{\mathbb{Z}}
\newcommand{\be}{\begin{eqnarray*}}
\newcommand{\ee}{\end{eqnarray*}}
\newcommand{\beq}{\begin{equation}}
\newcommand{\eeq}{\end{equation}}
\newcommand{\N}{\mathcal{N}}
\numberwithin{equation}{section}
\numberwithin{theorem}{section}
\def\F{\mathcal{F}}
\def\Z{\mathbb Z}
\def\H{\mathcal{H}}
\def\ch{\raise 0.5ex \hbox{$\chi$}}
\def\E{\mathbb{E}}
\let\phi\varphi
\let\epsilon\varepsilon
\def\1{\mathbf{1}}
\begin{document}

\title[Weighted norm estimates of noncommutative Calder\'{o}n-Zygmund operators]
{Weighted norm estimates of noncommutative Calder\'{o}n-Zygmund operators}

\authors

\author[ W. Fan]{Wenfei Fan}
\address{School of Statistics and Information, Shanghai University of International Business and Economics, Shanghai 201620, China}
\email{fanwenfei@suibe.edu.cn}

\author[ Y. Jiao]{Yong Jiao}
\address{School of Mathematics and Statistics, Central South University, Changsha 410075, China}
\email{jiaoyong@csu.edu.cn}

\author[ L. Wu]{Lian Wu}
\address{School of Mathematics and Statistics, Central South University, Changsha 410075, China}
\email{wulian@csu.edu.cn}

\author[ D. Zhou]{Dejian Zhou}
\address{School of Mathematics and Statistics,
Central South University, Changsha 410075, China}
\email{zhoudejian@csu.edu.cn}

\subjclass[2020]{Primary: 46L52; Secondary: 42B20}

\keywords{Noncommutative weighted $L_p$-space, Calder\'{o}n-Zygmund operators, maximal inequalities, square functions, endpoint estimates}

\thanks{This paper is supported by the National Key R$\&$D Program of China (No.2023YFA1010800) and the NSFC (No.12125109, No.12361131578, No.12001541).}

%
\begin{abstract}
This paper is devoted to studying weighted endpoint estimates of operator-valued singular integrals. Our main results include weighted weak-type $(1,1)$ estimate of noncommutative maximal Calder\'{o}n-Zygmund operators, corresponding version of square functions and a weighted $H_1- L_1$ type inequality. All these results are obtained under the condition that the weight belonging to the Muchenhoupt $A_1$ class and certain regularity assumptions imposed on kernels which are weaker than the Lipschitz condition.

\end{abstract}

\maketitle

%
%

\section{Introduction}

The theory of singular integrals, introduced by Calder\'{o}n and Zygmund \cite{CZ1952} in the 1950s, plays a central role in harmonic analysis due to its wide and deep applications in mathematical physics, partial differential equations and other branches of mathematics. The boundedness theory of singular integrals now has been well understood, see for instance the monograph by Grafakos \cite{GL2014} for a detailed exposition of this subject.

The study of weighted estimates related to singular integrals began in the 1970s, soon after Muckenhoupt  \cite{Mu1972} introduced the $A_p$ condition. In \cite{HMW1973}, Hunt, Muckenhoupt and Wheeden proved that Muckenhoupt's $A_p$ condition can be used to characterize the weighted strong-type $(p,p)$ and the weighted weak-type $(p,p)$ estimates of Hilbert transforms. Coifman and Fefferman \cite{CF1874} further extended this result to general Calder\'{o}n-Zygmund operators. The seminar work \cite{HTP2012} due to Hyt\"{o}nen resolves the famous $A_2$ conjecture for general Calder\'{o}n-Zygmund operators. The reader might also consult \cite{GJ1985,ST1989} and references therein for more information on the weighted theory of singular integrals.

On the other hand, noncommutative harmonic analysis has received wide concerns in recent years. A fundamental work in this area is due to Mei \cite{Mei2009}, where the theory of operator-valued Hardy spaces was established. Later, based on the noncommutative probabilistic approaches and the analytic methods originated from the operator space theory, fruitful results related to Calder\'{o}n-Zygmund operators have been transferred to the noncommutative setting (operator-valued context), see for instance \cite{Ca2018,CCP2022,CGPT2023,HLX2020,JMP2014,Pa2009}. However, to the best of our knowledge, the issue about the weighted boundedness of noncommutative Calder\'{o}n-Zygmund operators is almost open. Here, we should mention that very recently, Ga\l{\c{a}}zka et al. \cite{GJOW2022} obtained the sharp weighted Doob inequality for operator-valued martingales.  As applications, they established sharp weighted estimates for the noncommutative Hardy-Littlewood maximal operator and weighted bounds for noncommutative maximal truncations of a certain class of singular integrals.

The purpose of this paper is to investigate the weighted endpoint inequalities of operator-valued Calder\'{o}n-Zygmund operators. To present the results from an appropriate perspective, let us recall some related backgrounds. We refer the reader to the next section for any unexplained terminology. Let $\mathcal{M}=L_\infty(\R^d)\overline\otimes \mathcal{N}$, where $\mathcal{N}$ is a semifinite von Neumann algebra. Let $\Delta$ denote the diagonal of $\R^{d}\times \R^d$. The Calder\'{o}n-Zygmund operator associated with the kernel $K :\R^{d}\times \R^d\setminus \Delta \to \mathbb{C}$ is defined as follows:
\begin{equation*}\label{def-CZO}
Tf(x)=\int_{\R^d}K(x,y)f(y)dy,\quad x\notin \overrightarrow{\mathrm{supp}}(f),
\end{equation*}
whenever $f$ is a $L_1(\M)\cap \M$-valued compactly supported measurable function and $\overrightarrow{\mathrm{supp}}(f)$ denotes the support of $f$ as an operator-valued function in $\R^d$.
For any $\varepsilon>0$, the truncated singular integrals $T_{\varepsilon} f$ associated with the above Calder\'{o}n-Zygmund operator is defined by setting
\begin{equation*}
T_{\varepsilon} f(x)=\int_{|x-y|>\varepsilon} K(x, y) f(y) d y.
\end{equation*}
To study the boundedness of Calder\'{o}n-Zygmund operators $T$ and $T_{\varepsilon}$, one usually needs to employ the following size condition of $K$:
 \begin{equation}\label{sizec}
     	\big|{K}(x,y)\big|\lesssim \frac{1}{|x-y|^d},\quad x,y\in \R^d.
 \end{equation}
Different smoothness conditions are also considered, for example, the Lipschitz regularity condition or the H\"{o}rmander condition:
 \begin{enumerate}[{\rm (i)}]
\item (Lipschitz regularity condition) there exists $0<\gamma\leq 1$ such that
\begin{equation}\label{smoc}
 \begin{split}
 \big|{K}(x,y)-{K}(x',y)\big|&\lesssim \frac{|x-x'|^{\gamma}}{|x-y|^{{d}+\gamma}},\quad |x-x'|\leq \frac{1}{2}|x-y|,\\
 	\big|{K}(x,y)-{K}(x, y')\big|&\lesssim \frac{|y-y'|^{\gamma}}{|x-y|^{{d}+\gamma}},\quad |y-y'|\leq \frac{1}{2}|x-y|.
 \end{split}
 \end{equation}
\item (Classical $L_1$-H\"{o}rmander condition)
\begin{equation}\label{Hor-condition}
	\sup_{y_1,y_2\in \R^d}\int_{|x-y_1|\geq2|y_1-y_2|} \big|{K}(x,y_1)-{K}(x,y_2)\big|dx <\infty.
\end{equation}
\end{enumerate}

In this paper, we always assume that the kernel $K$ satisfies the following regular condition, which is weaker than \eqref{smoc}, and stronger than \eqref{Hor-condition}.

\begin{definition}
Let $1 \leq r < \infty$. We say that the kernel ${K}$ satisfies the $L_r$-H\"{o}rmander condition (write ${K}\in \H_r$), if the following inequality holds
\begin{equation}\label{Lr-Hc}
\sup\limits_{Q \text { dyadic }} \sum\limits_{j \geq 1} \sup \limits_{y \in Q}(2^{j} \ell(Q))^d\left(\frac {1}{2^{j d} \ell(Q)^d} \int_{2^j \ell(Q) \leq\left|x-c_Q\right| \leq 2^{j+1} \ell(Q)}{\big|{K}_Q(x,y)\big|}^r d x\right)^{\frac{1}{r}}<\infty,
 \end{equation}
 where ${K}_Q(x,y)={K}(x, y)-{K}\left(x, c_Q\right)$, and $c_Q$ denotes the centre of $Q$.
\end{definition}
\begin{remark}
Denote by $\mathcal{L}$ the class of kernels satisfying the Lipschitz condition \eqref{smoc} and denote by $\H$ the class of kernels satisfying \eqref{Hor-condition}. It is easy to check that for $1\leq r_1<r_2<\infty$,
\begin{equation*}
\mathcal{L}\subset\H_{r_2}\subset\H_{r_1}\subset\H.
\end{equation*}
\end{remark}
By virtue of Cuculescu projections (\cite{Cuc}), Parcet \cite{Pa2009} formulated the noncommutative Calder\'{o}n-Zygmund decomposition for operator-valued functions and proved that if the kernel $T$ satisfies the Lipschitz condition then the associated
Calder\'{o}n-Zygmund operator is of weak-type $(1,1)$. Since his pioneering work, the study of noncommuative Calder\'{o}n-Zygmund operators has obtained substantial achievements. For instance, Mei and Parcet \cite{Mei2009} established weak-type inequalities for a large class of noncommutative square functions associated with Calder\'{o}n-Zygmund operators. Cadilhac \cite{Ca2018} greatly simplified Parcet's original proof \cite{Pa2009}. Later, with the help of a new noncommutative Calder\'{o}n-Zygmund decomposition, Cadilhac, Conde-Alonso and Parcet \cite{CCP2022} provides a significant improvement on the result obtained in \cite{Pa2009}, by relaxing the {regularity} assumption of kernels from the Lipschitz condition to the $L_2$-H\"{o}rmander condition. We present the result from \cite[Theorem A]{CCP2022} here. Suppose that $K\in \H_2$. Let $T$ be the associated Calder\'{o}n-Zygmund operator which is bounded from $L_2(\mathcal M)$ to $L_2(\mathcal M)$. Then the following inequality holds:
\begin{equation}\label{CCP}
\|Tf\|_{L_{1,\infty}(\M)}\lesssim \|f\|_{L_1(\M)}.
\end{equation}
Recently, Hong, Lai and Xu \cite{HLX2020} obtained the corresponding version of \eqref{CCP} for noncommutative maximal Calder\'{o}n-Zygmund operators. Their result asserts that if the kernel $K\in \H_2$ satisfies the size condition \eqref{sizec}, and there exists some $p_0\in (1,\infty)$ such that $(T_{\varepsilon})_{{\varepsilon}>0}$ is of strong-type $(p_0,p_0)$, then $(T_{\varepsilon})_{{\varepsilon}>0}$ is of weak-type $(1,1)$, that is
\begin{equation}\label{Hong-maximal}
\|(T_\varepsilon f)_{\varepsilon>0}\|_{\Lambda_{1, \infty}\left(\mathcal{M} ; \ell_{\infty}\right)}\lesssim \|f\|_{L_1(\M)}.
\end{equation}
(see Section \ref{sec: 2.3} for the notion of ${\Lambda_{1, \infty}\left(\mathcal{M} ; \ell_{\infty}\right)}$). It is natural to ask whether the estimates \eqref{CCP} and \eqref{Hong-maximal} still hold true when the $L_2$-H\"{o}rmander condition is replaced by the $L_1$-H\"{o}rmander condition (just like their commutative counterparts, see {e.g.} \cite[Chapter V: Corollary 4.11]{GJ1985}). This question remains unknown and was explicitly proposed in \cite[Remark 2.4]{CCP2022}. More results and applications related to noncommutative Calder\'{o}n-Zygmund theory can be found in \cite{Ca2018,CPSZ2019,GAJP2021,HLMP2014,HLM2020,HX2021,JMP2014,JMPX2021} and the references therein.

Our first aim of this paper is to provide weighted analogues of \eqref{CCP} and \eqref{Hong-maximal}. Following \cite{GJOW2022}, we will always consider weights of the form $w\otimes {\bf 1}_{\mathcal{N}}$, where $w$ is a classical weight on $\R^d$ and ${\bf 1}_{\mathcal N}$ stands for the unit of $\mathcal N$. It is obvious that such weights commute with every element of $\mathcal{M}$. However, as evidenced in \cite{GJOW2022}, it is still full of challenges to consider noncommutative weighted estimates in such a special context. In the sequel, with no risk of confusion, we will always identify $w\otimes {\bf 1}_{\mathcal N}$ with $w$. The first main result of this paper reads as follows; in particular, it extends \eqref{Hong-maximal} to the weighted setting.

\begin{theorem}\label{main-maximal}
Let $T$ be a Calder\'{o}n-Zygmund operator associated to a kernel $K$ satisfying \eqref{sizec}. Given a weight ${w}\in A_1$, suppose that $K\in \H_{2r_w'}$, where $r_w$ comes from Lemma \ref{weightprop}(i) and $r_w'$ denotes the conjugate number of $r_w$. If there exists some $p_0>1$ such that $(T_\varepsilon)_{\varepsilon>0}$ is bounded from $L_{p_0}^w(\mathcal M)$ to $L_{p_0}^w(\mathcal M;\ell_\infty)$, then for any $f \in L_1^w(\mathcal{M})$ and $\lambda>0$, there exists a projection $e \in \mathcal{M}$ such that
$$
\sup _{\varepsilon>0}\left\|e\left(T_{\varepsilon} f\right) e\right\|_{\infty} \leq \lambda \quad \text{and} \quad {\lambda}\varphi^w\left(e^{\perp}\right) \lesssim \|f\|_{L_1^w(\M)};
$$
or equivalently, we have
\begin{equation}\label{weighted-main-1}
\|(T_\varepsilon f)_{\varepsilon>0}\|_{\Lambda_{1, \infty}^w\left(\mathcal{M} ; \ell_{\infty}\right)}\lesssim \|f\|_{L_1^w(\M)}.
\end{equation}
\end{theorem}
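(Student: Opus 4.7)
The plan is to follow the strategy of Hong--Lai--Xu establishing \eqref{Hong-maximal}, but with the Calder\'on--Zygmund decomposition and all intermediate estimates adapted to the weight $w$, using the reverse H\"older property from Lemma \ref{weightprop}(i) to pass from the $\H_{2r_w'}$-H\"ormander condition to genuinely weighted kernel estimates. Fix $\lambda>0$ and $f \in L_1^w(\M)_+$ with $\|f\|_{L_1^w(\M)}=1$. First I would construct Cuculescu projections $(q_k)_{k \in \Z}$ associated to the dyadic filtration of $\R^d$ (possibly with conditional expectations weighted by $w$, in the spirit of Coifman--Fefferman) to produce a noncommutative Calder\'on--Zygmund decomposition $f = g + b$ at level $\lambda$ in the manner of \cite{Pa2009,CCP2022,HLX2020}. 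Here $g$ is a ``good'' part satisfying $\|g\|_{L_{p_0}^w(\M)}^{p_0} \lesssim \lambda^{p_0-1}\|f\|_{L_1^w(\M)}$, and $b = \sum_i b_i$ is a sum of operator-valued ``atoms'' supported in dyadic cubes $Q_i$, each with vanishing mean relative to its parent, where $\sum_i w(Q_i) \lesssim \lambda^{-1}\|f\|_{L_1^w(\M)}$.

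For the good part, the hypothesis that $(T_\varepsilon)_{\varepsilon>0}$ is bounded from $L_{p_0}^w(\M)$ to $L_{p_0}^w(\M;\ell_\infty)$, combined with a noncommutative Chebyshev argument, yields a projection $e_1 \in \M$ with $\sup_{\varepsilon>0}\|e_1 T_\varepsilon(g) e_1\|_\infty \leq \lambda/2$ and $\varphi^w(e_1^\perp) \lesssim \lambda^{-p_0}\|g\|_{L_{p_0}^w}^{p_0} \lesssim \lambda^{-1}$. For the bad part, take $E=\bigcup_i c Q_i$ with a suitable dilation constant $c$; by $w \in A_1$ and the resulting doubling property, $w(E) \lesssim \sum_i w(Q_i) \lesssim \lambda^{-1}$, so $e_2 := \1_{E^c}$ satisfies $\varphi^w(e_2^\perp) \lesssim \lambda^{-1}$.

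The technical core is to estimate $\sup_{\varepsilon>0} e_2 T_\varepsilon(b) e_2$ uniformly in $\varepsilon$. Using the cancellation of each $b_i$ and writing $K(x,y)=K_{Q_i}(x,y)+K(x,c_{Q_i})$, the action of $T_\varepsilon$ on $b_i$ at points $x \in E^c$ reduces (modulo a truncation error from the annulus $\{|x-y|\sim\varepsilon\}$ meeting $Q_i$, which is absorbed by an $L_2$-type pseudo-localization as in \cite{HLX2020}) to an integral against $K_{Q_i}$. Summing over dyadic annuli $A_{i,j}=\{2^j\ell(Q_i)\leq|x-c_{Q_i}|<2^{j+1}\ell(Q_i)\}$ and applying H\"older with exponents $2r_w'$ and $(2r_w')'$ in the space variable gives
\begin{equation*}
\int_{A_{i,j}} |K_{Q_i}(x,y)|\, w(x)\,dx \leq \Bigl(\int_{A_{i,j}} |K_{Q_i}(x,y)|^{2r_w'}\, dx\Bigr)^{1/(2r_w')} \Bigl(\int_{A_{i,j}} w(x)^{(2r_w')'}\, dx\Bigr)^{1/(2r_w')'},
\end{equation*}
where the second factor is controlled via the reverse H\"older inequality from Lemma \ref{weightprop}(i) together with the $A_1$ bound, and the first factor is summable in $j$ by the $\H_{2r_w'}$ condition. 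Assembling these ingredients and integrating against the atoms $b_i$ yields the desired weighted control of the bad part.

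Taking $e = e_1 \wedge e_2$ gives $\varphi^w(e^\perp) \lesssim \lambda^{-1}\|f\|_{L_1^w(\M)}$ and $\sup_{\varepsilon>0} \|e T_\varepsilon(f) e\|_\infty \lesssim \lambda$, which is the claim after adjusting constants; the equivalent formulation \eqref{weighted-main-1} then follows from the definition of $\Lambda^w_{1,\infty}(\M;\ell_\infty)$. The main obstacle is the uniform handling of $\varepsilon>0$ in the bad-part estimate: a single projection must dominate $T_\varepsilon b$ simultaneously for all $\varepsilon$, which forces the $L_2$-pseudo-localization strategy and ultimately the stronger $\H_{2r_w'}$ assumption (rather than $\H_{r_w'}$, which would suffice in a non-maximal setting). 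Threading the weight $w$ through the noncommutative maximal estimates via the weighted trace $\varphi^w$ is the principal novelty.
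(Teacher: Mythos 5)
Your proposal transplants the commutative maximal-CZ argument but misses the two points where the noncommutative and weighted structure actually bite. First, you treat the bad part as a single sum of mean-zero ``atoms'' $b_i$ with $\sum_i\|b_i\|_{L_1^w}$ (and $\sum_i w(Q_i)$) under control. In the noncommutative Calder\'on--Zygmund decomposition of \cite{CCP2022} the bad part splits as $b_{\rm d}+b_{\rm off}$ with $b_{\rm off}=\sum_k p_kfq_k+q_kfp_k$, and only the diagonal part has summable $L_1^w$-norms (Lemma \ref{NewCZ}(ii)); the off-diagonal part has no such bound, and handling it is the central noncommutative difficulty. In the paper's proof it is treated by the operator H\"older inequality together with $q_kf_kq_k\le\lambda q_k$, producing the factor $(\lambda\nu(p_Q)w(Q))^{1/2}$, then Cauchy--Schwarz and the weighted kernel estimate of Lemma \ref{keylemma2} with $p=2$ --- and this is precisely why the hypothesis is $K\in\H_{2r_w'}$ rather than $\H_{r_w'}$. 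Your diagnosis that the exponent $2r_w'$ is forced by an ``$L_2$-pseudo-localization'' needed for uniformity in $\varepsilon$ is incorrect: neither \cite{HLX2020} nor this paper uses pseudo-localization; the uniformity in $\varepsilon$ is handled by writing $T_\varepsilon f=T_{j_\varepsilon}f+T_{\varepsilon,j_\varepsilon}f$ as in \eqref{reduction}, dominating $T_{\varepsilon,j_\varepsilon}$ by Hardy--Littlewood averaging operators and invoking the weighted maximal inequality of \cite{GJOW2022} (Proposition \ref{keyprop}), which reduces everything to the lacunary family $(T_j)_{j\in\Z}$.

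Second, your exceptional projection $e_2=\1_{E^c}$ with $E=\bigcup_i cQ_i$ and the bound $w(E)\lesssim\sum_i w(Q_i)\lesssim\lambda^{-1}\|f\|_{L_1^w}$ is a purely commutative construction that does not survive here: what Cuculescu's lemma controls is $\varphi^w(\1_\M-q)=\sum_Q\nu(p_Q)w(Q)$, with the operator weight $\nu(p_Q)$ attached, and the sum of the $w(Q)$ alone is not controlled (the same spatial cube can carry many generations of projections). The correct object is $\zeta=\1_\M-\bigvee_Q p_Q\chi_{5Q}$ of Lemma \ref{lem1}, which mixes the spatial indicator with the operator projection $p_Q$ and satisfies $\lambda\varphi^w(\1_\M-\zeta)\lesssim\|f\|_{L_1^w}$. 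Moreover, even after compressing by $\zeta$, the cancellation only yields pointwise domination $-\zeta G\zeta\le\zeta T_jb_{\rm d}\zeta\le\zeta G\zeta$ (and similarly with $H$ for $b_{\rm off}$) by explicit majorants whose $L_1^w$-norms are then estimated via Lemma \ref{keylemma2}; the final projection must be $\chi_{(0,\lambda]}(\zeta G\zeta)\wedge\zeta$, i.e.\ a further spectral projection of the majorant, since an $L_1^w$ bound does not by itself give the required $L_\infty$ bound after compression. Your H\"older-with-reverse-H\"older step for the weighted kernel integrals is the right idea (it is Lemma \ref{keylemma2}), but without the diagonal/off-diagonal split, the projection $\zeta$, and the spectral truncation of the majorants, the argument as proposed does not close.
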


\begin{remark}
The result above depends on a priori condition which related to the weight $w$, while in the classical case we only need to assume that $(T_\varepsilon)_{\varepsilon>0}$ is of strong-type $(p_0,p_0)$ (see \cite{GJ1985,ST1989}). 
We believe that this is also the case in the noncommutative setting, and we will further explore it in somewhere else. Moreover, according to \cite[Theorem 5.2]{GJOW2022}, a class of maximal singular integrals are bounded from $L_{p_0}^w(\mathcal M)$ to $L_{p_0}^w(\mathcal M;\ell_\infty)$, which illustrates that our priori condition here is meaningful.
\end{remark}

Next, we will switch to the weighted version of \eqref{CCP} and more generally, we are going to work on square functions which are fundamental subjects to the Littlewood-Paley theory.  In 2009, under the Lipschitz regularity assumption, Mei and Parcet \cite{Mei2009} established weak-type inequalities for a large class of noncommutative square functions associated with  Calder\'{o}n-Zygmund operators. This work lays a foundation on the noncommutative Littlewood-Paley theory. Let us briefly recall their result below. Instead of one single kernel $K$, consider a sequence of functions $\vec{K} =(K_k )_{k\geq1}$, where $K_k: \R^{d}\times \R^d\setminus \Delta \to \mathbb{C}$. Define the associated Calder\'{o}n-Zygmund operator by
\begin{equation}\label{def-cz}
\vec{T}f(x)=(T_kf)_{k\geq1},\end{equation}
where
\begin{equation*}
T_kf(x)=\int_{\R^d}K_k(x,y)f(y)dy,\quad x\notin \overrightarrow{\mathrm{supp}}(f).
\end{equation*}
We introduce the following size condition:
\begin{equation*}
 	\|\vec{K}(x,y)\|_{\ell_2}\lesssim \frac{1}{|x-y|^{{d}}},\quad x,y\in \R^d.
\end{equation*}
Similarly, replacing $|\cdot|$ by $\|\cdot\|_{\ell_2}$ on the left side of the inequalities \eqref{smoc} and \eqref{Hor-condition} respectively, one {gets} the corresponding smoothness conditions. It was proved in \cite[Theorem B1]{Mei2009} that 
\begin{equation}\label{Mei-Parcet}
\|\vec{T}f\|_{L_{1,\infty}(\M;\ell_2^{cr})}\lesssim \|f\|_{L_{1}(\M)}
\end{equation}
provided that the associated kernel $K$ satisfies the size condition and the Lipschitz regularity condition (see Section \ref{sec: 2.3} for the definition of ${L_{1,\infty}(\M;\ell_2^{cr})}$).

Our second result of this paper extends \eqref{Mei-Parcet} (and in particular, \eqref{CCP}) to the weighted case. Moreover, unlike the paper \cite{Mei2009} where the Lipschitz condition was employed, our result holds true under the following weaker assumption:
\begin{equation}\label{Lr-Hc-square}
\sup\limits_{Q \,\ \rm { dyadic }} \sum\limits_{j \geq 1} \sup \limits_{y \in Q}(2^{j} \ell(Q))^d\left(\frac {1}{2^{j d} \ell(Q)^d} \int_{2^j \ell(Q) \leq\left|x-c_Q\right| \leq 2^{j+1} \ell(Q)}{\|\vec{K}_{Q}(x,y)\|}_{\ell_2}^r d x\right)^{\frac{1}{r}}<\infty,
 \end{equation}
where $\vec{K}_{Q}(x,y)=\big({K}_k(x, y)-{K}_k\left(x, c_Q\right)\big)_{k\geq 1}$ and $c_Q$ denotes the centre of $Q$.

\begin{theorem}\label{main-square}
Let $T$ be a Calder\'{o}n-Zygmund operator with the kernel $\vec{K} =(K_n)_{n\geq1}$ satisfying the size condition. Given a weight ${w}\in A_1$, if the kernel $\vec{K}$ satisfies \eqref{Lr-Hc-square} with $r=2r_w'$ (where $r_w$ is as given in Lemma \ref{weightprop}(i) and $r_w'$ denotes the conjugate number of $r_w$) and
$\vec T=(T_k)_{k\geq1}$ is bounded from $L_2^w(\M)$ to $L_2^w(\M;\ell_2)$, then
\begin{equation}\label{weighted-main-2}
\|\vec{T}f\|_{L^w_{1,\infty}(\M;\ell_2^{cr})}\lesssim \|f\|_{L^w_{1}(\M)},\quad f\in L^w_{1}(\M).
\end{equation}
\end{theorem}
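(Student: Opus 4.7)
The plan is to extend the noncommutative Calder\'on-Zygmund scheme of Mei-Parcet \cite{Mei2009} (used for the unweighted square-function version) to the weighted setting, replacing the Lipschitz regularity by the $L_{2r_w'}$-H\"ormander condition \eqref{Lr-Hc-square} and systematically invoking the reverse H\"older property supplied by Lemma \ref{weightprop}(i). Fix $\lambda>0$ and, after the usual reduction to the positive case, $f\in L_1^w(\M)_+$. The first step is to apply a noncommutative weighted Calder\'on-Zygmund decomposition at height $\lambda$, in the same spirit as in the proof of Theorem \ref{main-maximal}. This produces $f=g+b$, where the good part $g$ obeys $\|g\|_{L_2^w(\M)}^2\lesssim \lambda\|f\|_{L_1^w(\M)}$ and the bad part $b=\sum_Q b_Q$ is a sum of atoms supported on pairwise disjoint dyadic cubes, each with vanishing mean, such that $\sum_Q w(Q)\lesssim \lambda^{-1}\|f\|_{L_1^w(\M)}$. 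Column and row exceptional projections $q_c,q_r$ are built from the associated Cuculescu sequence and absorb the contribution of the enlarged set $E=\bigcup_Q 9Q$, with $\varphi^w(\1-q_c\wedge q_r)\lesssim \lambda^{-1}\|f\|_{L_1^w(\M)}$.

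The good part is handled directly: the a priori boundedness $\vec T: L_2^w(\M)\to L_2^w(\M;\ell_2)$ combined with noncommutative Chebyshev gives
$$ \lambda^2\,\varphi^w\!\left(\1_{\{|\vec T g|_{\ell_2^{cr}}>\lambda\}}\right)\lesssim \|\vec T g\|_{L_2^w(\M;\ell_2)}^2\lesssim \|g\|_{L_2^w(\M)}^2\lesssim \lambda\|f\|_{L_1^w(\M)}, $$
where the $\ell_2^{cr}$-norm is treated by a symmetric column/row argument.

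The core of the proof is the estimate for the bad part off the enlarged set $E$. Using the cancellation $\int b_Q=0$, one writes $\vec T b_Q(x)=\int \vec K_Q(x,y)\,b_Q(y)\,dy$. Passing to the target norm $L_{1,\infty}^w(\M;\ell_2^{cr})$ forces a noncommutative Cauchy-Schwarz on the column/row blocks, which naturally produces the squared kernel norm $\|\vec K_Q(x,y)\|_{\ell_2}^2$ in the analysis. On each dyadic annulus $A_j(Q)=\{2^j\ell(Q)\leq|x-c_Q|\leq 2^{j+1}\ell(Q)\}$, the weight is then separated from the kernel by the scalar H\"older inequality with conjugate exponents $(r_w,r_w')$:
$$ \int_{A_j(Q)} w(x)\,\|\vec K_Q(x,y)\|_{\ell_2}^2\,dx \leq \left(\int_{A_j(Q)} w^{r_w}\right)^{\!1/r_w} \left(\int_{A_j(Q)} \|\vec K_Q(x,y)\|_{\ell_2}^{2r_w'}\,dx\right)^{\!1/r_w'}. $$
By the reverse H\"older inequality of Lemma \ref{weightprop}(i) together with the $A_1$ property, the first factor is dominated by $C\,|A_j(Q)|^{1/r_w}\,\inf_{y\in Q} w(y)$; the second factor is exactly the quantity appearing in \eqref{Lr-Hc-square}. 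Summing first over $j\geq 1$ (where convergence is guaranteed by the H\"ormander hypothesis with exponent $r=2r_w'$) and then over the selected cubes $Q$ produces the required bound $\lesssim \lambda^{-1}\|f\|_{L_1^w(\M)}$.

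The main obstacle I anticipate is assembling a noncommutative weighted Calder\'on-Zygmund decomposition that is compatible with the row/column asymmetry of $L_{1,\infty}^w(\M;\ell_2^{cr})$: one needs simultaneously the $L_2^w$ control on the good part, the atomic structure of the bad part, and exceptional projections of controlled $w$-trace, all interacting correctly with the Cuculescu sequence and with the reverse H\"older exponent $r_w$. This subtle interplay -- already nontrivial in the unweighted case treated in \cite{Mei2009} -- is precisely where the extra flexibility afforded by the choice $r=2r_w'$ in the H\"ormander scale becomes indispensable.
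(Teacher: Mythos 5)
Your overall skeleton (Calder\'on--Zygmund decomposition at height $\lambda$, good part via the a priori $L_2^w\to L_2^w(\M;\ell_2)$ bound and Chebyshev, bad part off an enlarged set via H\"older with exponents $(r_w,r_w')$ plus reverse H\"older and the H\"ormander hypothesis) matches the paper's strategy, and your good-part estimate is essentially the paper's. But there is a genuine gap in the bad part, which is exactly where the noncommutative difficulty lives. You describe the bad part as ``$b=\sum_Q b_Q$, a sum of atoms supported on pairwise disjoint dyadic cubes, each with vanishing mean,'' with $\sum_Q w(Q)\lesssim \lambda^{-1}\|f\|_{L_1^w(\M)}$ --- this is the commutative picture. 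In the noncommutative decomposition of \cite{CCP2022} used in the paper, $f=g+b_{\rm d}+b_{\rm off}$ with $b_{\rm off}=\sum_k p_kfq_k+q_kfp_k$, and the off-diagonal part has \emph{no} $L_1^w$ control: only the diagonal part satisfies $\sum_k\|b_{{\rm d},k}\|_{L_1^w(\M)}\lesssim\|f\|_{L_1^w(\M)}$ (Lemma \ref{NewCZ}(ii)). With the atomic structure you posit, the off-the-exceptional-set estimate would go through by taking $L_1(\N)$-norms inside, using the scalar Khintchine inequality, and Lemma \ref{keylemma-square} with $p=1$ --- which only needs the H\"ormander condition with $r=r_w'$, not $2r_w'$. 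So your own justification of the hypothesis $r=2r_w'$ (``Cauchy--Schwarz on column/row blocks produces $\|\vec K_Q\|_{\ell_2}^2$'') is not coherent with the decomposition you assume: the squared kernel genuinely arises only in the off-diagonal term, where one applies the operator-valued H\"older inequality to $\int_Q p_Qf(y)q_Q K_{\e,Q}(x,y)\,dy$, uses Cuculescu's bound $\int_Q q_Qf(y)q_Q\,dy\leq\lambda|Q|\,q_Q$ to detach a factor $(\lambda|Q|\nu(p_Q))^{1/2}$, applies Lemma \ref{keylemma-square} with $p=2$, and closes with a Cauchy--Schwarz over cubes against $\lambda\varphi^w({\bf 1}_\M-q)\lesssim\|f\|_{L_1^w(\M)}$. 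None of this machinery appears in your sketch, and without it the term $b_{\rm off}$ is not estimated.

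A secondary but real omission: you never say how the quasi-norm $\|\cdot\|_{L_{1,\infty}^w(\M;\ell_2^{cr})}$ is actually handled. The paper reduces it via the weak-type noncommutative Khintchine inequality \eqref{khin1} to a single weak $L_1$ estimate for $\widetilde Tf=\sum_k\varepsilon_kT_kf$ in $\A^w=L_\infty(0,1)\overline\otimes\M$, after which the diagonal bad part is treated with the classical Khintchine inequality ($p=1$ in Lemma \ref{keylemma-square}) and the off-diagonal part as above. Your ``symmetric column/row argument'' is not spelled out and is not automatic for the sum quasi-norm defining $L_{1,\infty}^w(\M;\ell_2^{cr})$; some device of this kind (Khintchine or an explicit column/row splitting of the bad part) is needed and should be supplied.
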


\begin{remark}
Note that when $K_1=K$ and $K_n=0$ for all $n\geq 2$, the above result can be viewed as a weighted version of \eqref{CCP}. Moreover, if $w\equiv1$, then our results \eqref{weighted-main-1} and \eqref{weighted-main-2} reduce to \eqref{Hong-maximal} and \eqref{Mei-Parcet}, respectively.
\end{remark}

The final aim of this paper is to deal with a weighted $H_1-L_1$ type inequality for noncommutative Calder\'{o}n-Zygmund operators under the H\"{o}rmander condition. Besides \eqref{Lr-Hc-square}, we need the following condition as well:
\begin{equation}\label{Hormander-two}
 \begin{split}
&\sup\limits_{Q \text { dyadic }} \sum\limits_{j \geq 1} \sup \limits_{y \in Q}(2^{j} \ell(Q))^d\left(\frac {1}{2^{j d} \ell(Q)^d} \int_{2^j \ell(Q) \leq\left|x-c_Q\right| \leq 2^{j+1} \ell(Q)}{\|\vec{K}_Q(y,x)\|}_{\ell_2}^r d x\right)^{\frac{1}{r}}<\infty.
 \end{split}
\end{equation}

\begin{theorem}\label{HL1}
Let $T$ be a Calder\'{o}n-Zygmund operator with the kernel $K$ satisfying the size condition. Given a weight ${w}\in A_1$,
if the kernel $K$ satisfies \eqref{Lr-Hc-square} and \eqref{Hormander-two} with $r=r_w'$ (where $r_w'$ is as given in Lemma \ref{weightprop}(i)) and $T:L_2^w(\M)\rightarrow L_2^w(\M;\ell_2)$ is bounded, then
\begin{equation}
\|Tf\|_{L_1^w(\M;\ell_2^{cr})}\lesssim \|f\|_{H^w_{1}(\M)}.
\end{equation}	
\end{theorem}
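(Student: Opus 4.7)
The plan is to prove the theorem via atomic decomposition of the weighted operator-valued Hardy space. Using $H_1^w(\M) = H_1^{w,c}(\M) + H_1^{w,r}(\M)$, decompose $f = \sum_i\lambda_i a_i$ with $\sum_i|\lambda_i|\lesssim\|f\|_{H_1^w(\M)}$, where each $a_i$ is a column or row atom, supported in some cube $Q$, with vanishing mean $\int_Q a_i\,dy = 0$ and normalization $\|a_i\|_{L_2^w(\M,c/r)}\leq w(Q)^{-1/2}$. By linearity it suffices to show $\|Ta\|_{L_1^w(\M;\ell_2^{cr})}\lesssim 1$ uniformly for each atom $a$.

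Fix such an atom $a$ supported on $Q$ and split $\R^d = 2Q \cup (2Q)^c$. For the local part, Hölder's inequality (valid because $w$ is central) combined with the assumed $L_2^w$-boundedness of $T$ yields
$$\|\mathbf{1}_{2Q}Ta\|_{L_1^w(\M;\ell_2^{cr})} \leq w(2Q)^{1/2}\|Ta\|_{L_2^w(\M;\ell_2)} \lesssim w(2Q)^{1/2}\|a\|_{L_2^w(\M)} \lesssim 1,$$
the last step using the doubling $w(2Q)\lesssim w(Q)$ of $A_1$ weights.

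For the far part, exploit the cancellation of $a$ to replace $\vec K(x,y)$ by $\vec K_Q(x,y) = \vec K(x,y) - \vec K(x,c_Q)$, and partition $(2Q)^c$ into the dyadic annuli $A_j = \{2^j\ell(Q)\leq|x-c_Q|<2^{j+1}\ell(Q)\}$. An operator-valued Minkowski inequality for the $\ell_2^c$-column norm of $(T_k a(x))_k$ yields the pointwise bound $(\sum_k T_k a(x)^\ast T_k a(x))^{1/2} \leq \int_Q\|\vec K_Q(x,y)\|_{\ell_2}\,|a(y)|\,dy$. Applying $\tau_\N$, integrating against $w(x)\,dx$, and swapping the order of integration reduce the problem to proving
$$\sum_{j\geq 1}\sup_{y\in Q}\int_{A_j}\|\vec K_Q(x,y)\|_{\ell_2}\,w(x)\,dx \;\lesssim\; w(Q)/|Q|.$$
This follows by Hölder in $x$ with conjugate exponents $(r_w,r_w')$: the kernel moment is bounded annulus by annulus via \eqref{Lr-Hc-square} with $r = r_w'$, while the weight moment is controlled by the reverse Hölder inequality of Lemma~\ref{weightprop}(i) together with the $A_1$ estimate $w(B_j)\lesssim(|B_j|/|Q|)w(Q)$. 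Coupling this with the atomic bound $\int_Q\tau_\N(|a(y)|)\,dy \lesssim |Q|/w(Q)$ (obtained via Cauchy-Schwarz in $y$ against $w^{-1}$, the pointwise $A_1$-comparability $w^{-1}(y)\lesssim|Q|/w(Q)$ on $Q$, and the $L_2^w$-atomic normalization) closes the estimate. For a row atom, the symmetric argument applies; the complementary smoothness condition \eqref{Hormander-two} enters when the analysis is transferred to the first variable of the kernel, as arises through $(Ta)^\ast$ or through a dual $L_\infty$-$\mathrm{BMO}$ analysis.

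The chief obstacle will be the atomic $L_1(\M)$-type control: when $\N$ is only semifinite, $\|a\|_{L_1(\M)}$ may fail to be finite for a generic $L_2^w$-atom, so the far-part argument must be carried out at the level of positive $\N$-valued operators, with the trace taken only after the kernel integration, exploiting the centrality of $w$ throughout. The pointwise $A_1$-comparability $w(y)\sim w(Q)/|Q|$ on $Q$ is what ultimately makes the scalar bound $\int_Q\tau_\N(|a(y)|)\,dy\lesssim|Q|/w(Q)$ go through and closes the argument.
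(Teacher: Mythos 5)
Your overall skeleton (atomic decomposition, local part via the assumed $L_2^w$-boundedness, far part via cancellation plus the weighted H\"ormander condition and H\"older against the reverse-H\"older exponent) is the same as the paper's, but there is a genuine gap at the very first step: you \emph{assume} an atomic decomposition of $H_1^w(\M)$ into atoms supported on a single cube $Q$ with $\int_Q a\,dy=0$ and $\|a\|_{L_2^w}\le w(Q)^{-1/2}$. Recall that $H_1^w(\M)$ here is the \emph{noncommutative martingale} Hardy space, and no such decomposition is available off the shelf; in the operator-valued setting the correct atoms are the $(1,2)_c^w$-atoms of Definition \ref{def-atom}, whose ``support'' is a one-sided, operator-valued projection $e=\sum_{Q\in D_k}p_Q\chi_Q\in\M_k$ (so $a=ae$, with $p_Q\in\mathcal P(\N)$, possibly spread over many dyadic cubes), with normalization $\|a\|_{L_2^w(\M)}\le\varphi(ew)^{-1/2}$ and cancellation $\E_k(a)=0$. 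Establishing that $H_{1w}^{0,c}(\M)=H_{1,\rm at}^{cw}(\M)$ is the main new content of Section \ref{sec6}: it requires $H_{1w}^c=h_{1w}^c$ for $A_1$ weights (Lemma \ref{use-lem}, using regularity of the dyadic filtration and \eqref{A1weight}), a weighted $L_2$ martingale-transform estimate proved via the UMD property of $L_2(\N)$ (Lemma \ref{L2}), and the crude-atom/algebraic-atom reductions in the Appendix. None of this is supplied or replaceable by a citation in your argument, and with the classical single-cube atoms you posit the decomposition is simply not the one that holds.

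This is not only a matter of provenance: your far-part estimate genuinely breaks for the atoms you chose, for exactly the reason you flag and do not resolve. After the kernel reduction, the annular estimate forces the quantity $\int_Q\tau_\N(|a(y)|)\,dy$ (the kernel is scalar, so the $L_1(\N)$-norm of $a(y)$ is unavoidable), and your proposed bound via Cauchy--Schwarz against $w^{-1}$ still needs $\tau_\N(|a(y)|)\lesssim\tau_\N(|a(y)|^2)^{1/2}$, which fails when $\N$ is semifinite and $a(y)$ has no finite-trace support projection; e.g.\ $a=\chi_Q\otimes b$ with $b\in L_2(\N)\setminus L_1(\N)$ satisfies all your atom conditions but has $\int_Q\tau_\N(|a|)\,dy=\infty$. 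In the paper this is resolved by the atom structure itself: $a=ae$ with $\varphi(ew)<\infty$ gives $\|a\|_{L_1^w(\M)}\le 1$ (Remark \ref{use-rem}(i)), one decomposes $a=\sum_{Q\in D_k}ap_Q\chi_Q$ and controls each far piece by $\|a\chi_Q\|_{L_1^w(\M)}$ through Lemma \ref{keylemma-square} with $p=1$ (which transfers the weight from $w(x)$ to $w(y)$ via $A_1$, rather than using $w(y)\sim w(Q)/|Q|$), and these quantities sum to $\|a\|_{L_1^w(\M)}\le 1$. A smaller issue: your ``pointwise'' operator inequality $(\sum_k|T_ka(x)|^2)^{1/2}\le\int_Q\|\vec K_Q(x,y)\|_{\ell_2}|a(y)|\,dy$ is not valid as an operator inequality (the operator triangle inequality fails); it must be stated after taking the $L_1(\N\overline\otimes B(\ell_2))$-norm, as in \eqref{IIQ} of the paper, though this part is repairable.
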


Even in the unweighted setting, we could not find a concrete proof for the above result. A surprising but reasonable phenomenon of this result is that in the unweighted case the smoothness condition can be relaxed to the $L_1$-H\"{o}rmander condition $\H_1$. Namely, let $T$ be a Calder\'{o}n-Zygmund operator with the kernel $\vec{K} =(K_n )_{n\geq1}$ satisfying the size condition and \eqref{Lr-Hc-square}, \eqref{Hormander-two} with $r=1$, then for $f\in L_{1}(\M)$,
\begin{equation*}\|Tf\|_{L_{1}(\M;\ell_2^{cr})}\lesssim \|f\|_{H_1(\M)}.
\end{equation*}
We think this result is of special interests.

Finally, let us say a few words about the proofs of the main results.
Theorem \ref{main-maximal} and Theorem \ref{main-square} mainly depend on the improved noncommutative Calder\'{o}n-Zygmund decompositions established in \cite{CCP2022}.  The proof of Theorem \ref{HL1} is based on a completely new tool: the weighted atomic decomposition theorem. To establish this tool, we use a constructive approach, which was invented in \cite{CRX2023} and developed to more general case in \cite{RWZ2020}.  The weighted atomic decomposition theorem can be also applied to describe the duality of weighted martingale Hardy spaces and establish new noncommutative martingale inequalities. These applications will be discussed in our future paper.

The paper is organized as follows. In the next section, we present the necessary backgrounds and some basic facts that will be used. In section \ref{sec3}, we recall the noncommutative Calder\'{o}n-Zygmund decomposition established in \cite{CCP2022} and also provide several essential lemmas related to weights for further use. Section \ref{sec4} and Section \ref{sec5} are dedicated to the proofs of Theorem \ref{main-maximal} and Theorem \ref{main-square} separately. In the last section, by means of the new atomic decompositions for weighted martingale Hardy spaces, we prove the third result stated in Theorem \ref{HL1}, and we end this paper with {an open problem}.

\medskip

\section{Preliminaries}\label{sec2}

Throughout this paper, we use $C$ to denote some positive constant which may be different at each occurrence. Without other specific illustration, $C$ will be related with the dimension $d$ or the weight $w$. In addition, we write $A\lesssim B$ if there is a constant $C$ such that the inequality $A \leq C B$ is satisfied, and write $A\approx B$ if both $A\lesssim B$ and $B \lesssim A$ hold.

\subsection{Noncommutative $L_p$ spaces}
In this subsection, we introduce noncommutative $L_p$ spaces. We use standard operator algebra notion and refer to \cite{KRII,Take79} for background on von Neumann algebra theory.
Let $(\mathcal{M}, \tau)$ be a semi-finite von Neumann algebra equipped with a normal faithful trace $\tau$.
All $\tau$-measurable operators are denoted by $L_0(\mathcal{M})$.
Let $1\leq p<\infty$. Define the noncommutative Lebesgue spaces $L_p(\mathcal{M})$ as
$$L_p(\M)=\{a\in L_0(\M): \tau(|a|^p)<\infty\},$$
where $|a|=(a^*a)^{1/2}$.
These are Banach spaces when endowed with the norm given by
$$\|a\|_{L_p(\M)}=[\tau(|a|^p)]^{1/p}.$$
As usual, $L_{\infty}(\mathcal{M})$ is just $\mathcal{M}$ with the usual operator norm. 

Suppose that $a$ is a self-adjoint $\tau$-measurable operator and let $a=\int_{-\infty}^{\infty} \lambda d e_{\lambda}$ stand for its spectral decomposition. For any Borel subset $B$ of $\mathbb{R}$, the spectral projection of $a$ corresponding to the set $B$ is defined by $\chi_B(a)=\int_{-\infty}^{\infty} \chi_B(\lambda) d e_{\lambda}$.
The noncommutative weak $L_p$-space, denoted by $L_{p,\infty}(\M)$, is defined as the collection
of all $a\in L_0(\M)$ for which the quasi-norm
$$\|a\|_{L_{p,\infty}(\M)}=\sup_{\lambda>0}\lambda [\tau(\chi_{(\lambda,\infty)}(|a|))]^{\frac{1}{p}}<\infty.$$

\subsection{Vector-valued noncommutative $L_p$ spaces}\label{sec: 2.3}
We recall the definitions of the column/row spaces. Let $1\leq p<\infty$. Let $(a_n)_{n\geq 1}$ be a finite sequence in $L_p(\M)$. Define
$$
\|a\|_{L_p(\M;\ell_2^c)}=\big\|\big(\sum_{n\geq 1}|a_n|^2\big)^{1/2}\big\|_{L_p(\M)}, \quad
\|a\|_{L_p(\M;\ell_2^r)}=\big\|\big(\sum_{n\geq 1}|a_n^*|^2\big)^{1/2}\big\|_{L_p(\M)}.
$$
Then, it can be checked that $
\|\cdot\|_{L_p(\M;\ell_2^c)}$ and $
\|\cdot\|_{L_p(\M;\ell_2^r)}$ are norms. We define $L_p(\M;\ell_2^c)$ (resp. $L_p(\M;\ell_2^r)$) as the completion of all finite sequences in $L_p(\M)$ with respect to $\|\cdot\|_{L_p(\M;\ell_2^c)}$ (resp. $\|\cdot\|_{L_p(\M;\ell_2^r)}$). In particular, if $p=2$, then by the trace property, it follows that $L_2(\M;\ell_2^c)=L_2(\M;\ell_2^r)$, and we denote it by $L_2(\M;\ell_2)$.

	Next, we define the spaces $L_p(\M;\ell^{cr}_2)$ as follows:
	if $p\geq 2$, define
	$$
	L_p(\M;\ell^{cr}_2)=L_p(\M;\ell_2^c)\cap L_p(\M;\ell_2^r)
	$$
	equipped with the intersection norm
	$$\|a\|_{L_p(\M;\ell^{cr}_2)}=\max\{\|a\|_{L_p(\M;\ell_2^c)}, \|a\|_{L_p(\M;\ell_2^r)}\};$$
	if $1\leq p<2$, define
	$$
	L_p(\M;\ell^{cr}_2)=L_p(\M;\ell_2^c)+ L_p(\M;\ell_2^r)
	$$
	equipped with the sum norm
	$$\|a\|_{L_p(\M;\ell^{cr}_2)}=\inf\{\|b\|_{L_p(\M;\ell_2^c)}+ \|c\|_{L_p(\M;\ell_2^r)}\},$$
	where the infimum is taken over all $b$, $c $ in $L_p(\M)$ such that $a=b+c$.
	
	Replacing $L_p(\M)$ by $L_{p,\infty}(\M)$ in above definitions, we obtain the corresponding weak column/row spaces $L_{p,\infty}(\M;\ell_2^c)$, $L_{p,\infty}(\M;\ell_2^r)$ and $L_{p,\infty}(\M;\ell^{cr}_2)$.
	
The following noncommutative Khintchine inequality established in \cite[Corollary 3.2]{Ca2019} will be used in our later discussions.

	\begin{lemma}\label{Khin}
 Let $\left(\varepsilon_k\right)_{k \geq 1}$ be a sequence of Rademacher functions defined on $(0,1)$, and $a=(a_k)_{k\geq 1}$ be a sequence in $\M$. Then
 $$
\|(a_k)_{k\geq 1}\|_{L_p\left(\mathcal{M};\ell_2^{cr}\right)}\approx\Big\|\sum_{k \geq 1} \varepsilon_k a_k\Big\|_{L_p\left(\mathcal{A}\right)}
$$
holds for all $1\leq p<\infty$, and
$$
\|(a_k)_{k\geq 1}\|_{L_{1, \infty}\left(\mathcal{M}; \ell_2^{c r}\right)} \lesssim\Big\|\sum_{k \geq 1} \varepsilon_k a_k\Big\|_{L_{1, \infty}\left(\mathcal{A}\right)},
$$
where $\mathcal{A}=L_{\infty}(0,1) \overline{\otimes} \mathcal{M}$ equipped with the trace $\widetilde{\varphi}=\int_0^1 \otimes \varphi$.
\end{lemma}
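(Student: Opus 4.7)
The plan is to handle the $L_p$-norm equivalence for $1\le p<\infty$ and the one-sided weak-type endpoint separately, since the latter cannot be reached by the same duality argument.

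I would start with $p=2$, where the orthogonality relation $\int_0^1\varepsilon_j\varepsilon_k=\delta_{jk}$ combined with a tracial expansion of $\bigl|\sum_k\varepsilon_k a_k\bigr|^2$ immediately gives $\bigl\|\sum_k\varepsilon_k a_k\bigr\|_{L_2(\mathcal{A})}^2=\sum_k\tau(a_k^*a_k)=\|(a_k)\|_{L_2(\M;\ell_2^{cr})}^2$, since at $p=2$ the column, row, and intersection/sum norms all agree. For $p>2$, I would invoke the Lust--Piquard--Pisier inequality: the lower bound by the row and column $L_p$-norms follows by taking the $\M$-valued conditional expectation of $\bigl|\sum\varepsilon_k a_k\bigr|^2$ (and of its transpose) together with operator concavity of $x\mapsto x^{1/2}$; the upper bound is obtained via the standard self-adjointification $a_k\mapsto\bigl(\begin{smallmatrix}0&a_k\\a_k^*&0\end{smallmatrix}\bigr)$, which reduces the mixed row/column estimate to a purely column one, and that case can then be handled by a noncommutative Burkholder--Gundy type inequality with respect to the Rademacher filtration. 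The range $1\le p<2$ is then obtained by duality: since $(L_p(\M;\ell_2^{cr}))^{*}=L_{p'}(\M;\ell_2^{cr})$ with the sum norm on one side and the intersection norm on the other, a direct pairing against an optimising dual vector and the already-established $p'$ case yield both halves of the equivalence.

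The weak-type statement is the main obstacle, since $L_{1,\infty}$ is not a dual Banach space and the argument above collapses; only a one-sided bound can hold. I would follow the strategy of Cadilhac: fix a level $\lambda\gtrsim\bigl\|\sum\varepsilon_k a_k\bigr\|_{L_{1,\infty}(\mathcal{A})}$ and perform a noncommutative Gundy-type decomposition of $(a_k)$ adapted to the sub-Gaussian structure of Rademacher sums, splitting $a_k=g_k+b_k+c_k$ so that the good part $(g_k)$ is controlled in $L_2(\M;\ell_2)$ while the column-bad and row-bad parts $(b_k),(c_k)$ live on projections of trace $\lesssim\lambda^{-1}\bigl\|\sum\varepsilon_k a_k\bigr\|_{L_{1,\infty}(\mathcal{A})}$. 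Chebyshev applied to $(g_k)$ via the $p=2$ identity already established, combined with the support estimates on $(b_k)$ and $(c_k)$, will then express $(a_k)$ in the sum norm of $L_{1,\infty}(\M;\ell_2^{cr})$ with the required control. The delicate point, and where I expect the bulk of the technical work to concentrate, is the simultaneous construction of the column and row projections realising this decomposition along the Rademacher filtration; this requires an iterated Cuculescu-type construction that respects both structures at once, and is the genuine innovation that makes the endpoint argument go through.
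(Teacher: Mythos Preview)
The paper does not prove this lemma at all: it is simply quoted as a known result, with the citation ``established in \cite[Corollary 3.2]{Ca2019}'' and no further argument. So there is no in-paper proof to compare against.

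That said, your outline is essentially the route taken in the literature the paper is citing. The $L_p$ equivalence for $1\le p<\infty$ is the Lust-Piquard and Lust-Piquard--Pisier noncommutative Khintchine inequality, and your sketch (orthogonality at $p=2$, conditional-expectation/concavity lower bound and a matricial reduction for the upper bound when $p>2$, then duality for $1\le p<2$ using $(L_p(\M;\ell_2^{cr}))^*=L_{p'}(\M;\ell_2^{cr})$) is the standard path. For the weak-type endpoint you correctly identify that duality fails and that only the one-sided bound survives; the argument you describe---a Gundy-type splitting of $(a_k)$ into a good $L_2$-controlled part and column/row bad parts supported on small projections, built via an iterated Cuculescu construction along the Rademacher filtration---is precisely the content of Cadilhac's paper \cite{Ca2019} that the authors cite. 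In short: your proposal is correct and aligned with the actual proofs in the cited sources, but the present paper treats the lemma as a black box and offers nothing to compare your argument to.
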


Now, we introduce maximal function spaces in the noncommutative setting. Let $1\leq p\leq \infty$. Following \cite{Ju2002, JX2007}, we define $L_p\left(\mathcal{M}; \ell_\infty\right)$ as the space of all sequences $a = (a_n)_{n \geq 1} \subset L_p\left(\mathcal{M}\right)$ which admit the decomposition: for all $n \geq 1$,
\begin{displaymath}
a_n = ub_n v,
\end{displaymath}
where $u,\,v \in L_{2p}\left(\mathcal{M}\right)$ and $b = \left(b_n\right)_{n \geq 1} \subset L_\infty\left(\mathcal{M}\right)$. We equip this space  with the norm
\begin{displaymath}
\|a\|_{L_p\left(\mathcal{M}; \ell_\infty\right)} = \inf \left\lbrace\|u\|_{L_{2p}(\mathcal{M})} \sup_{n \geq 1} \|b_n\|_{\infty} \|v\|_{L_{2p}(\mathcal{M})}\right\rbrace,
\end{displaymath}
where the infimum runs over all factorizations of $a$ as above.

Furthermore, there is a simpler characterization of $L_p\left(\mathcal{M}; \ell_\infty\right)$ when restricted to self-adjoint operators. Let $a=(a_n)_{n\geq 1}$ be a sequence of self-adjoint operators in $L_p(\M)$. Then $a$ belongs to $L_p\left(\mathcal{M}; \ell_\infty\right)$ if and only if there exists a positive operator $ b\in L_p\left(\mathcal{M}\right)$ such that $-b\leq a_n \leq b$ for all $n \geq 0$. In addition,
\begin{equation}\label{eqiv-space}
\|a\|_{ L_p\left(\mathcal{M}; \ell_\infty\right)} = \inf\lbrace \|b\|_{L_p(\mathcal{M})} : -b\leq a_n \leq b,\quad\forall n\geq 1\rbrace.
\end{equation}
The definition of $L_p(\mathcal{M};\ell_\infty)$ extends easily to the case in which the sequences are indexed by an arbitrary set $I$, the relevant factorization also makes sense. Denote the corresponding space by $L_p(\mathcal{M};\ell_\infty(I))$.  We will omit the index set $I$ if there has no confusion. 

Next, we switch to the weak maximal function space $\Lambda_{p,\infty}(\M;\ell_\infty)$. {Denote by $ \mathcal{P}(\M)$ the set of all projections in $\M$.} Set $e^{\perp}={\bf 1}_\M-e$ for any $e\in \mathcal{P}(\M)$. The space $\Lambda_{p,\infty}(\M;\ell_\infty)$ is defined as the set of all sequences $a=(a_k)$ in $L_{p,\infty}(\M)$ such that the quasi-norm
\begin{equation*}\label{weakspace}
\left\|\left(a_k\right)\right\|_{\Lambda_{p, \infty}\left(\mathcal{M} ; \ell_{\infty}\right)}=\sup _{\lambda>0} \lambda \inf _{e \in \mathcal{P}(\mathcal{M})}\left\{\left(\varphi\left(e^{\perp}\right)\right)^{\frac{1}{p}}:\left\|e a_k e\right\|_{\infty} \leq \lambda, \text{ for all } k\right\}
\end{equation*}
is finite. In particular, we have the following equivalent characterization of $\left\|\left(a_k\right)\right\|_{\Lambda_{p, \infty}\left(\mathcal{M} ; \ell_{\infty}\right)}$ in the case that a sequence $a=(a_k)$ consisting of self-adjoint operators,
\begin{equation*}\label{eqiv-weakspaces}
\left\|\left(a_k\right)\right\|_{\Lambda_{p, \infty}\left(\mathcal{M} ; \ell_{\infty}\right)}=\sup _{\lambda>0} \lambda \inf _{e \in \mathcal{P}(\mathcal{M})}\left\{\left(\varphi\left(e^{\perp}\right)\right)^{\frac{1}{p}}:-\lambda\leq ea_ke\leq \lambda, \text{ for all } k\right\}.
\end{equation*}

\subsection{Weights}\label{sec2.1}
In this subsection, we record some basic information on the classical weighted theory in $\R^d$. A weight is a nonnegative locally integrable function on $\R^d$. Given a weight $w$ and a measurable set $A$ in $\R^d$, we will always use the following notation:
$$w(A)=\int_A w(x){d}x.$$
Given $1<p<\infty$ and a weight $w$ on $\mathbb R^d$, we say that $w$ satisfies Muckenhoupt's condition $A_p$ if its $A_p$ characteristic
\begin{equation*}\label{Ap}
[w]_{A_p} := \sup_Q \left(\frac{1}{|Q|} \int_Q w(x){d}x \right)\left(\frac{1}{|Q|}\int_Q {w(x)}^{\frac{1}{1-p}}dx\right)^{p-1}<\infty,
\end{equation*}
where the supremum is taken over all cubes $Q\subset \R^d$ with sides parallel to the axes and $|Q|$ denotes the measure of $Q$. A weight $w$ belongs to the class $A_1$, if there is a constant $C>0$ such that for all cubes $Q\subset \R^d$ with sides parallel to the axes,
$$ \frac{1}{|Q|}\int_{Q}w(x)dx \leq C\operatorname*{essinf}_{x\in Q}w(x).$$
The smallest $C$ with the above property is denoted by $[w]_{A_1}$ and called the $A_1$ characteristic of $w$. A weight $w$ is said to satisfy the reverse H\"{o}lder inequality for some $q\in(1,\infty)$, denoted by $w\in {\rm{RH}}_q$, if for all cube $Q$ in $\R^d$ we have
$$
\left(\frac{1}{|Q|} \int_Q w(x)^{q} d x\right)^{1 / q} \lesssim \frac{1}{|Q|} \int_Q w(x) d x .$$

\begin{remark}
\begin{enumerate}[{\rm (i)}]
\item It can be checked that ${\rm{RH}}_{q_2}\subset {\rm{RH}}_{q_1}$ for any $1<q_1\leq q_2<\infty$.
\item For $w\in{\rm{RH}}_q$, using the H\"{o}lder inequality, we find
\begin{equation}\label{RH-condition}
\left(\frac{1}{|Q|} \int_Q w(x)^{q} d x\right)^{1 / q} \approx \frac{1}{|Q|} \int_Q w(x) d x .
\end{equation}
\end{enumerate}
\end{remark}

We now state the following useful facts of $A_1$ weights. The proof can be found in \cite[Section 7]{GL2014}.

\begin{lemma}\label{weightprop}
Let $w$ be a weight on $\R^d$ with $w \in A_1$. We have the following assertions.
\begin{enumerate}[{\rm (i)}]
 		\item There exists $ r_w>1$ such that $w\in {\rm{RH}}_{r_w}$. 		
		\item For any cube $Q$ in $\mathbb{R}^d$ and any measurable subsets $S$ of $Q$, we have
$$\frac{|S|}{|Q|} \leq C\frac{w(S)}{w(Q)},$$
where $C$ denotes some positive constant which is independent of $S$, $Q$.
\end{enumerate}
\end{lemma}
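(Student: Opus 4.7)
The plan is to establish the two classical self-improvement properties of $A_1$ weights stated in the lemma.

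Part (ii) is essentially immediate from the definition of $A_1$. For any measurable $S\subset Q$, the elementary bound $\operatorname*{essinf}_{x\in Q} w(x)\leq w(S)/|S|$ combined with the defining inequality $w(Q)/|Q|\leq [w]_{A_1}\operatorname*{essinf}_{x\in Q} w(x)$ yields $|S|/|Q|\leq [w]_{A_1}\,w(S)/w(Q)$. So the real work lies in part (i).

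For (i), my approach is the standard $A_1$ reverse Hölder self-improvement via a Calderón--Zygmund stopping time on $w$ itself. Fix a cube $Q_0$ and set $\alpha_0=w(Q_0)/|Q_0|$. For each $\alpha>\alpha_0$, I would run the dyadic Calderón--Zygmund decomposition of $w\chi_{Q_0}$ at height $\alpha$ inside $Q_0$, producing a maximal family of pairwise disjoint dyadic subcubes $\{Q_j^\alpha\}_j\subset Q_0$ such that $\alpha<w(Q_j^\alpha)/|Q_j^\alpha|\leq 2^d\alpha$ and $w(x)\leq\alpha$ for a.e.\ $x\in Q_0\setminus\bigcup_j Q_j^\alpha$. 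The $A_1$ hypothesis enters critically here: applied to each $Q_j^\alpha$ it forces $\operatorname*{essinf}_{Q_j^\alpha}w\geq w(Q_j^\alpha)/([w]_{A_1}|Q_j^\alpha|)>\alpha/[w]_{A_1}$, so $\bigcup_j Q_j^\alpha\subset\{x\in Q_0:w(x)\geq \alpha/[w]_{A_1}\}$ modulo a null set. Combining the doubling control on $w(Q_j^\alpha)$ with this inclusion yields the pivotal level-set estimate
\[
w\bigl(\{x\in Q_0:w(x)>\alpha\}\bigr)\leq 2^d\alpha\,\bigl|\{x\in Q_0:w(x)\geq \alpha/[w]_{A_1}\}\bigr|,\qquad \alpha>\alpha_0.
\]

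Next I would plug this into the layer-cake representation $\int_{Q_0}w^{1+\epsilon}\,dx=\epsilon\int_0^\infty\alpha^{\epsilon-1}w(\{w>\alpha\}\cap Q_0)\,d\alpha$, split the integral at $\alpha_0$ (the contribution from $\alpha\leq\alpha_0$ being bounded trivially by $w(Q_0)\alpha_0^\epsilon$), and then make the change of variables $\alpha\mapsto [w]_{A_1}\alpha$ on the remaining integral. After routine manipulation this produces, for every $\epsilon>0$,
\[
\int_{Q_0}w^{1+\epsilon}\,dx\leq w(Q_0)\alpha_0^\epsilon+\frac{2^d[w]_{A_1}^{1+\epsilon}\epsilon}{1+\epsilon}\int_{Q_0}w^{1+\epsilon}\,dx.
\]
Choosing $\epsilon>0$ small enough that $2^d[w]_{A_1}^{1+\epsilon}\epsilon/(1+\epsilon)<1/2$ (possible since this quantity tends to $0$ as $\epsilon\to 0^+$), I can absorb the last term into the left side, obtaining $|Q_0|^{-1}\int_{Q_0}w^{1+\epsilon}\,dx\lesssim (w(Q_0)/|Q_0|)^{1+\epsilon}$. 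Extracting $(1+\epsilon)$-th roots yields exactly $w\in\mathrm{RH}_{r_w}$ with $r_w=1+\epsilon$.

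The main technical obstacle is the absorption step, since a priori one does not know $\int_{Q_0}w^{1+\epsilon}<\infty$ and hence is not allowed to subtract it. I would sidestep this by first carrying the argument out for the truncation $w_N:=\min(w,N)$---bounded and hence trivially integrable to any power on $Q_0$---proving the desired inequality with constants independent of $N$, and then passing to the limit $N\to\infty$ via monotone convergence. One must also track the $[w]_{A_1}$-dependence of $2^d[w]_{A_1}^{1+\epsilon}\epsilon/(1+\epsilon)$ so as to be sure some admissible $\epsilon$ exists; once the level-set inequality and the $A_1$-driven lower bound on $\operatorname*{essinf}_{Q_j^\alpha}w$ are in hand, the remaining computation is elementary.
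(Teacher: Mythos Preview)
Your argument is correct: part (ii) follows immediately from the definition of $A_1$ with constant $C=[w]_{A_1}$, and your proof of (i) is the standard Calder\'on--Zygmund stopping-time argument for the reverse H\"older self-improvement, carried out carefully (including the truncation $w_N=\min(w,N)$ to justify the absorption step; note that $[w_N]_{A_1}\leq [w]_{A_1}$, so the constants are uniform in $N$). The paper itself does not give a proof of this lemma but simply cites \cite[Section~7]{GL2014}, where exactly this classical argument appears; so your write-up is more detailed than the paper's treatment but follows the same route as the cited reference.
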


\subsection{Noncommutative weighted $L_p$ spaces}\label{sec: 2.4}
Suppose that $\mathcal{N}$ is a given semifinite von Neumann algebra with a faithful normal trace $\nu$. In the rest of the paper, we always assume that $\mathcal{M}=L_{\infty}(\mathbb{R}^d)\overline{\otimes}\mathcal{N}$ endowed with the standard  trace $\varphi=\int_{\R^d}\mbox{d}x \otimes\nu$.

In our considerations below, a weight will be a positive operator of the form $w\otimes {\bf 1}_{\mathcal N}$, where $w$ is a classical weight on $\R^d$ and ${\bf 1}_{\mathcal N}$ stands for the unit of $\mathcal N$.  It is clear that such operators commute with all elements of $\mathcal{M}$. For $1\leq p<\infty$, we say that $ w\otimes {\bf 1}_{\mathcal N}$ satisfies Muckenhoupt's condition $A_p$ (or belongs to the $A_p$ class), if its scalar version $w$ enjoys this property. Furthermore, in this case, we set 
\begin{equation*}
[w\otimes {\bf 1}_{\mathcal N}]_{A_p}=[w]_{A_p}.
\end{equation*}
In the following, with no danger of confusion, we simply write $w$ instead of $w\otimes {\bf 1}_{\mathcal N}$.

Given $1\leq p<\infty$ and $w$ as above, the associated noncommutative weighted $L_p$ space is defined by
$$ L_p^ { {{w}}}(\mathcal{M})=\left\{a\in L_0(\mathcal{M})\,:\,a { {w}}^{1/p}\in L_p(\mathcal{M})\right\},$$
{equipped with the norm
\begin{equation*}
\|a\|_{L_p^w(\M)}=\varphi\big(|aw^{1/p}|^p\big)^{\frac 1 p}=\varphi\big(|a|^pw\big)^{\frac 1 p}.
\end{equation*}
In the following, we denote
\begin{equation*}
\varphi^{w}(a):=\varphi(aw), \quad a\in L_1^w(\M).
\end{equation*}}

By replacing the trace $\varphi$ by its weighted version $\varphi^w$ in section \ref{sec: 2.3}, we get the corresponding weighted vector-valued spaces $L_p^w(\M;\ell^{cr}_2)$, $L_{p,\infty}^w(\M;\ell^{cr}_2)$ and weighted maximal function spaces $L_p^w(\mathcal{M};\ell_\infty)$, $\Lambda_{p,\infty}^w(\mathcal{M};\ell_\infty)$ respectively. Moreover, by Lemma \ref{Khin}, we have the weighted version of noncommutative Khintchine inequality stated below.

\begin{remark}
Let $\left(\varepsilon_k\right)_{k \geq 1}$ be a sequence of Rademacher functions defined on $(0,1)$, and $a=(a_k)_{k\geq 1}$ be a sequence in $\M$. Then
\begin{equation}\label{khin2}
\|(a_k)_{k\geq 1}\|_{L_2^w\left(\mathcal{M};\ell_2\right)}=\Big\|\sum_{k \geq 1} \varepsilon_k a_k\Big\|_{L_2^w\left(\mathcal{A}^w\right)}
\end{equation}
and
\begin{equation}\label{khin1}
\|(a_k)_{k\geq 1}\|_{L_{1, \infty}^w\left(\mathcal{M}; \ell_2^{c r}\right)} \lesssim\Big\|\sum_{k \geq 1} \varepsilon_k a_k\Big\|_{L_{1, \infty}^w\left(\mathcal{A}^w\right)},
\end{equation}
where $\mathcal{A}^w=L_{\infty}(0,1) \overline{\otimes} \mathcal{M}$ equipped with the trace $\widetilde{\varphi^w}=\int_0^1 \otimes \varphi^w$.
\end{remark}

\subsection{Noncommuative (martingale) Hardy spaces}\label{sec2.5}
We recall some basic definitions of noncommutative martingales and Hardy spaces in this subsection. For more details, we refer the reader to \cite{Cuc,Me2007,PX1997}. For each $n\geq 1$, let
	$$D_n=\left\{D_n^{k}: k=(k_1,k_2,\cdots,k_d)\in \mathbb{Z}^d\right\}$$
and $\mathcal{D}_n$ be the $\sigma$-algebra generated by $(D_n^k)_{k}$,
where the dyadic cubes $D_n^{k}$ are defined as
	$$D_n^{k}=2^{-n}\cdot(k_1, k_1+1]\times (k_2, k_2+1]\times\cdots (k_d, k_d+1].$$
Set
\begin{equation}\label{def-filtration}
\M_{n}=L_{\infty}(\R^d, \mathcal{D}_{n},dx)\overline{\otimes} \mathcal{N},\quad n\geq 1
\end{equation}
Let us denote the associated conditional
	expectations as
	$$\mathbb{E}_{n}: \M\to \M_{n}.$$
In the present setting,
		 for {$f \in L_1(\M)+\M$}, we have
\begin{equation}\label{Emn}
	\mathbb{E}_{n}(f)(x)=\sum_{Q\in D_{n}}\frac{1}{|Q|} \int_Q f(y) dy \chi_Q(x), \quad x\in \R^d.
\end{equation}
and for any positive {$f \in L_1(\M)+\M$}
\begin{equation}\label{regular}
\E_n (f)\leq 2^d\E_{n-1}(f),\end{equation}
which implies the regularity of the filtration $(\M_n)_{n\geq 1}$ (see \cite[page 561]{Pa2009}).

A noncommutative martingale with respect to the filtration $(\M_n)_{n\geq 1}$ is a sequence $f = \left(f_n\right)_{n \geq 1}$ in $L_1(\M)+\M$ such that
$$\E_{n}\left(f_{n+1}\right) = f_{n}, \quad \text{for all } n \geq 1.$$
Given a martingale $f = \left(f_n\right)_{n \geq 1}$, the martingale difference sequence $(df_n)_{n\geq1}$ is defined by $df_1=f_1$ and $$df_n=f_n-f_{n-1},\quad \forall n\geq2.$$
In addition, given a weight $w$ and $1\leq p<\infty$, we say that a martingale $f$ is $L_p^w(\M)$-bounded if $\|f\|_{L_p^w(\M)}=\sup_{n\geq 1} \|f_n\|_{L_p^w(\M)}<\infty$.

The column square functions (resp. column conditional square functions) are defined by
	$$
	S_{c,n}(f)=\Big(\sum_{k=1}^n|df_k|^2\Big)^{1/2}, \quad S_{c}(f)=\Big(\sum_{k\geq1}|df_k|^2\Big)^{1/2}.$$
	$$
	\Big(\text{resp.} \quad s_{c,n}(f)=\Big(\sum_{k=1}^n\E_{k-1}|df_k|^2\Big)^{1/2}, \quad s_{c}(f)=\Big(\sum_{k\geq1}\E_{k-1}|df_k|^2\Big)^{1/2}.\quad\Big)
	$$
	The row versions $S_{r,n}(f)$, $S_r(f)$, $s_{r,n}(f)$ and $s_r(f)$ can be defined by taking adjoints.	
Define the weighted martingale Hardy spaces as follows
	$$H_{1w}^c(\M)=\{f\in L_1^w(\M):\|f\|_{H_{1w}^c(\M)}=\|S_c(f)\|_{L_1^w(\M)}<\infty\}$$
	and
		$$H_{1w}^r(\M)=\{f\in L_1^w(\M):\|f\|_{H_{1w}^r(\M)}=\|S_r(f)\|_{L_{1}^w(\M)}<\infty\}.$$
Let
	$$H_{1}^w(\M)=H_{1w}^c(\M)+H_{1w}^r(\M)$$
	equipped with the norm
	$$\|f\|_{H_{1}^w(\mathcal{M})}=\inf\left\{\|g\|_{H_{1w}^c(\mathcal{M})}+\|h\|_{H_{1w}^r(\mathcal{M})}\right\},$$
	where the infimum is taken over all $g\in H_{1w}^c(\M)$ and $h\in H_{1w}^r(\M)$ such that $f=g+h$. Replacing $S_c(f)$ and $S_r(f)$ by $s_c(f)$ and $s_r(f)$ respectively in the above definitions, we obtain the noncommutative martingale spaces $h_{1w}^c(\M)$ and $h_{1w}^r(\M)$ as well.
	
{We assert that the Hardy spaces $H_{1w}^c(\M)$ and $h_{1w}^c(\M) $ are equivalent under some certain condition. To this end, we introduce the martingale $A_1$ condition. A weight $w$ satisfies the martingale $A_1$ condition, if its characteristic
$$\sup_{n\geq 1} \left\|\E_n(w)/w\right\|_{\infty}<\infty.$$
It should be emphasized that if $w$ is a weight on $\R^d$ and $w\in A_1$, then it can be verified that $w$ is a martingale $A_1$ weight with respect to the filtration $(\mathcal M_n)_{n\geq 1}$ (see \cite{MK1977}). More specifically, if $w\in A_1$, then for any $n\geq 1$, there is a constant $C>0$ such that for all cubes $Q\subset \R^d$,
\begin{equation}\label{A1weight}
\sup_{n\geq1}\E_n(w)\leq Cw.
\end{equation}}

\noindent We also need the following lemma, the proof of which is same to that of \cite[Lemma 7.3]{Ju2003}.

\begin{lemma}\label{usefullem}
Let $a$ be an invertible positive operator with bounded inverse. If $b\geq 0$ and $b^2\leq a^2$, then
$$
\varphi^w(a^{-1}(a^2-b^2))\leq 2\varphi^w(a-b).
$$
\end{lemma}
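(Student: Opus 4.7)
The plan is to mirror the proof of the unweighted analogue \cite[Lemma 7.3]{Ju2003}, carefully exploiting the key structural feature of the weights considered in this paper: namely, since $w=w\otimes \mathbf{1}_{\mathcal N}$, it commutes with every element of $\mathcal M$, and in particular with $a$, $b$, $a^{-1}$, and all their functional calculi. This allows the full strength of the cyclicity of the trace $\varphi$ to be transferred to $\varphi^w$.

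First I would write the elementary algebraic identity
\begin{equation*}
a^2-b^2=a(a-b)+(a-b)b,
\end{equation*}
so that, after left-multiplying by $a^{-1}$,
\begin{equation*}
a^{-1}(a^2-b^2)=(a-b)+a^{-1}(a-b)b.
\end{equation*}
Applying $\varphi^w$ and noting that the first summand already gives exactly $\varphi^w(a-b)$, it remains to show
\begin{equation*}
\varphi^w\bigl(a^{-1}(a-b)b\bigr)\le \varphi^w(a-b).
\end{equation*}

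Next I would expand $b=a-(a-b)$ in the remaining term to obtain
\begin{equation*}
a^{-1}(a-b)b=a^{-1}(a-b)a-a^{-1}(a-b)^2.
\end{equation*}
For the first piece, since $w$ commutes with $a$ and $a^{-1}$, cyclicity of $\varphi$ gives
\begin{equation*}
\varphi^w\bigl(a^{-1}(a-b)a\bigr)=\varphi\bigl(a^{-1}(a-b)a\,w\bigr)=\varphi\bigl(a\,a^{-1}(a-b)w\bigr)=\varphi^w(a-b).
\end{equation*}
For the second piece, since $a^{-1}$ and $w$ are positive commuting operators, the element $(a^{-1}w)^{1/2}$ is well-defined, and setting $c=(a^{-1}w)^{1/2}(a-b)$ (with $a-b$ self-adjoint) yields, via cyclicity,
\begin{equation*}
\varphi^w\bigl(a^{-1}(a-b)^2\bigr)=\varphi\bigl((a-b)\,a^{-1}w\,(a-b)\bigr)=\varphi(c^*c)\ge 0.
\end{equation*}
Combining these two computations gives $\varphi^w(a^{-1}(a-b)b)\le \varphi^w(a-b)$, and hence the stated bound with constant $2$.

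There is essentially no genuine obstacle: the assumption that $a$ is invertible with bounded inverse ensures all products are $\tau$-measurable and that the cyclic manipulations of $\varphi$ are legitimate, while the hypothesis $b^2\le a^2$ is never actually used beyond confirming that all quantities involved are finite (in particular, $\varphi^w(a-b)$ is a real number, not necessarily nonnegative, so we do not need $b\le a$, which would fail in general in the noncommutative setting). The only subtle point to verify carefully in the write-up is the integrability/measurability of the intermediate expressions; this is guaranteed by the boundedness of $a^{-1}$ together with the assumption that $a-b\in L_1^w(\mathcal M)$ (implicit in the statement, since otherwise the right-hand side is trivially infinite).
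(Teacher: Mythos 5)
Your argument is correct and is essentially the paper's proof: the paper just invokes the unweighted lemma of Junge--Xu, and the point is precisely that $w$ is central in $\mathcal M$, so $\varphi^w$ is tracial on the relevant products and the identity $2\varphi^w(a-b)-\varphi^w\bigl(a^{-1}(a^2-b^2)\bigr)=\varphi^w\bigl(a^{-1}(a-b)^2\bigr)\ge 0$ carries over verbatim, which is exactly your computation (your factorization through $c=(a^{-1}w)^{1/2}(a-b)$ is the same positivity step as writing $a^{-1/2}(a-b)^2a^{-1/2}\ge 0$). One small correction to an aside: by operator monotonicity of the square root, $b^2\le a^2$ does imply $b\le a$ (it is the converse implication that fails in the noncommutative setting), so $\varphi^w(a-b)\ge 0$ automatically; this does not affect your proof.
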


\begin{lemma}\label{use-lem}
Given a weight $w\in A_1$, we have
$$
H_{1w}^c(\M)=h_{1w}^c(\M) $$
with equivalent norms.
\end{lemma}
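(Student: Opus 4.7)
\emph{Plan.} The equivalence is established via two inclusions. The inclusion $h_{1w}^c(\M) \hookrightarrow H_{1w}^c(\M)$ follows directly from the regularity \eqref{regular} of the dyadic filtration: for each $n \geq 1$, since $|df_n|^2 \in \M_n^+$, we have
\[
|df_n|^2 = \E_n(|df_n|^2) \leq 2^d \E_{n-1}(|df_n|^2).
\]
Summing over $n$ gives $S_c(f)^2 \leq 2^d\, s_c(f)^2$, and by operator monotonicity of the square root we obtain the pointwise bound $S_c(f) \leq 2^{d/2} s_c(f)$. Integrating against $w$ then yields $\|f\|_{H_{1w}^c(\M)} \leq 2^{d/2}\|f\|_{h_{1w}^c(\M)}$.

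For the reverse inclusion $H_{1w}^c(\M) \hookrightarrow h_{1w}^c(\M)$, the plan is to combine Lemma \ref{usefullem} (Junge's trick) with the martingale $A_1$ property \eqref{A1weight}. Fix $\epsilon > 0$ and set $a_n := (s_{c,n}(f)^2 + \epsilon\mathbf{1}_{\M})^{1/2}$. Since $s_{c,n}(f)^2 = \sum_{k=1}^n \E_{k-1}|df_k|^2 \in \M_{n-1}$, the operator $a_n$ lies in $\M_{n-1}^+$ and is invertible with bounded inverse, and one has the identity $a_n^2 - a_{n-1}^2 = \E_{n-1}|df_n|^2 \geq 0$. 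Applying Lemma \ref{usefullem} with $(a,b) = (a_n,a_{n-1})$ and telescoping over $n$ yields
\[
\sum_{n\geq 1}\varphi^w\bigl(a_n^{-1}\E_{n-1}|df_n|^2\bigr) \leq 2\varphi^w\bigl((s_c(f)^2 + \epsilon\mathbf{1}_{\M})^{1/2}\bigr) - 2\varphi^w(a_0).
\]
Next, using the bimodule property of $\E_{n-1}$, the trace identity $\varphi\circ\E_{n-1} = \varphi$, and the centrality of $w = w \otimes \mathbf{1}_{\mathcal N}$ in $\M$, one rewrites $\varphi^w(a_n^{-1}\E_{n-1}|df_n|^2) = \varphi\bigl(a_n^{-1}|df_n|^2\E_{n-1}(w)\bigr)$; the bound $\E_{n-1}(w) \leq Cw$ from \eqref{A1weight} and the commutativity of $w$ with every element of $\M$ then give $\varphi^w(a_n^{-1}\E_{n-1}|df_n|^2) \leq C\varphi^w(a_n^{-1}|df_n|^2)$. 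A noncommutative Cauchy--Schwarz estimate on the sum $\sum_n\varphi^w(a_n^{-1}|df_n|^2)$, together with the pointwise bound $a_n \leq (s_c(f)^2 + \epsilon\mathbf{1}_\M)^{1/2}$, controls the left-hand side by a constant multiple of $\|S_c(f)\|_{L_1^w(\M)}^{1/2}\|s_c(f)\|_{L_1^w(\M)}^{1/2}$ (up to $\epsilon$-corrections). Combining with the telescoped inequality and performing an absorption on the common factor $\|s_c(f)\|_{L_1^w(\M)}^{1/2}$ yields, after letting $\epsilon \to 0$, the desired bound $\|s_c(f)\|_{L_1^w(\M)} \lesssim \|S_c(f)\|_{L_1^w(\M)}$.

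The main obstacle is the non-commutativity of $a_n^{-1}$ and $|df_n|^2$: since $a_n \in \M_{n-1}$ while $|df_n|^2 \in \M_n$, these operators do not commute in general, and all manipulations must be carried out at the trace level rather than pointwise. The decisive feature that makes the argument close is the centrality of $w$ in $\M$, which permits the martingale $A_1$ bound \eqref{A1weight} to be transferred freely past conditional expectations and enables the final Cauchy--Schwarz absorption step to succeed.
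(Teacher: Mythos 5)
Your easy inclusion (regularity $\Rightarrow S_c^2(f)\le 2^d s_c^2(f)$) is exactly the paper's. The reverse inclusion, however, has a genuine gap, and it is structural. Applying Lemma \ref{usefullem} to $(a_n,a_{n-1})$ with $a_n=(s_{c,n}(f)^2+\epsilon\mathbf 1)^{1/2}$ and telescoping gives an \emph{upper} bound of the auxiliary quantity $X:=\sum_{n}\varphi^w\bigl(a_n^{-1}\E_{n-1}|df_n|^2\bigr)$ by $2\|s_c(f)\|_{L_1^w(\M)}$ (up to $\epsilon$-terms), i.e.\ you bound $X$ by the very norm you need to control; this is of no use for $\|s_c(f)\|_{L_1^w(\M)}\lesssim\|S_c(f)\|_{L_1^w(\M)}$. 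Worse, $X$ is in fact always comparable to $\|s_c(f)\|_{L_1^w(\M)}$ (the matching lower bound comes from $a_n^{-1}\ge (s_c(f)^2+\epsilon\mathbf 1)^{-1/2}$ and telescoping), so your key claimed step, that a Cauchy--Schwarz argument plus $a_n\le (s_c(f)^2+\epsilon\mathbf 1)^{1/2}$ yields $X\lesssim \|S_c(f)\|_{L_1^w(\M)}^{1/2}\|s_c(f)\|_{L_1^w(\M)}^{1/2}$, is (up to constants) equivalent to the inequality you are trying to prove; the absorption scheme is circular. That step also cannot follow from generic Cauchy--Schwarz: $a_n\le (s_c(f)^2+\epsilon\mathbf 1)^{1/2}$ gives $a_n^{-1}\ge(s_c(f)^2+\epsilon\mathbf 1)^{-1/2}$, the wrong direction for an upper bound, and classically $\|s(f)\|_1\lesssim\|S(f)\|_1$ is \emph{false} for general filtrations (a martingale with a single increment equal to $\pm M$ with probability $M^{-2}$ has $\|s\|_1=1$ but $\|S\|_1=1/M$), so any correct proof must use the regularity of the dyadic filtration in this direction --- and your argument for the hard inclusion never invokes it.

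For comparison, the paper's proof runs: $\|f\|_{h_{1w}^c(\M)}=\sum_n\varphi\bigl(s_c^{-1}(f)(s_{c,n}^2(f)-s_{c,n-1}^2(f))w\bigr)\le\sum_n\varphi\bigl(s_{c,n}^{-1}(f)|df_n|^2\E_{n-1}(w)\bigr)$ by predictability; then $\E_{n-1}(w)\le Cw$ from \eqref{A1weight} (this $A_1$-transfer step you do have); then --- crucially --- the regularity consequence $S_{c,n}(f)\le 2^{d/2}s_{c,n}(f)$, i.e.\ $s_{c,n}^{-1}(f)\lesssim S_{c,n}^{-1}(f)$, replaces the conditional square function by the ordinary one, and only \emph{then} is Lemma \ref{usefullem} applied, to the pair $(S_{c,n}(f),S_{c,n-1}(f))$, so that the telescoping produces $\|S_c(f)\|_{L_1^w(\M)}$. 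In short: you apply Junge's trick on the $s_{c,n}$ side, where the telescoping returns $\|s_c(f)\|_{L_1^w(\M)}$, instead of on the $S_{c,n}$ side, and you omit the second use of regularity that bridges the two; that bridge is the missing idea, and without it the proposal does not close.
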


\begin{proof}
The regularity of the filtration presented in \eqref{regular} gives us
\begin{equation}\label{regularity}
S_c^2(f)\leq 2^ds_c^2(f),
\end{equation}
which implies
$${\|f\|}_{H_{1w}^c(\mathcal{M})}\leq \sqrt{2^d}{\|f\|}_{h_{1w}^c(\mathcal{M})}. $$
Thus, it suffices to prove the converse side. By approximation, we can assume that $S_{c,n}(f)$ and $s_{c,n}(f)$ are invertible with bounded inverses. Then, we rewrite
 \begin{align*}
\|f\|_{h_{1w}^c(\mathcal{M})}&=\varphi(s_c(f)w)=\sum_{n\geq 1}\varphi\big(s_c^{-1}(f)(s^2_{c,n}(f)-s^2_{c,n-1}(f))w\big)\\
&\leq \sum_{n\geq 1}\varphi\big(s_{c,n}^{-1}(f)(s^2_{c,n}(f)-s^2_{c,n-1}(f))w\big)\\
&=\sum_{n\geq 1}\varphi\big(s_{c,n}^{-1}(f)|df_n|^2\E_{n-1}(w)\big).
 \end{align*}
The last equality is due to the fact that the sequence $(s_{c,n}(f))_{n\geq1}$ is predictable. 
Since $w\in A_1$, it follows from \eqref{A1weight} and the fact $
s_{c,n}^{-1}(f)\leq 2^{-\frac d 2}S_{c,n}^{-1}(f)$
that
\begin{align*}
\|f\|_{h_{1w}^c(\mathcal{M})}&\lesssim\sum_{n\geq 1}\varphi\big(S_{c,n}^{-1}(f)|df_n|^2w\big)\\&\lesssim
\sum_{n\geq 1}\varphi\Big(S_{c,n}^{-1}(f)\big(S_{c,n}^2(f)-S^2_{c,n-1}(f)\big)w\Big)\\&\lesssim
\sum_{n\geq 1}\varphi\Big(\big(S_{c,n}(f)-S_{c,n-1}(f)\big)w\Big)= \|f\|_{H_{1w}^c(\mathcal{M})},
\end{align*}
where the last inequality is due to Lemma \ref{usefullem}.
\end{proof}
\medskip

\section{Noncommutative Calder\'{o}n-Zygmund decomposition}\label{sec3}
Our proof of Theorem \ref{main-maximal} and Theorem \ref{main-square} mainly depends on the noncommutative Calder\'{o}n-Zygmund decomposition established in \cite{CCP2022}. To state the decomposition and collect some new properties associated with weights properly, we consider the dyadic filtration $\{\mathcal M_n\}_{n\geq 1}$ defined in \eqref{def-filtration}. This result below can be considered as a weighted version of Cuculescu's seminar work \cite{Cuc}.

 \begin{lemma}[{\cite[Theorem 3.4]{GJOW2022}}]\label{Cuculescu}
 Let $w\in A_1$ and $f\in L_1^w(\mathcal{M})$ be positive. Consider the martingale $(f_n)_{n\geq1}$ with $f_n=\mathbb{E}_n(f)$, $n\geq1$. 	For any fixed $\lambda>0$, there exists a sequence of decreasing projections $(q_n^{(\lambda)})_{n\geq 1}$ in $\mathcal{M}$
 	satisfying the following properties:
 	\begin{enumerate}[{\rm (i)}]
 		\item For every $n\geq1 $, $q_n^{(\lambda)}\in \mathcal{M}_n $;
 		
 		\item For every $n\geq1 $, $q_n^{(\lambda)}$ commutes with $q_{n-1}^{(\lambda)}f_n q_{n-1}^{(\lambda)}$;
 		
 		\item For every $n\geq1 $, $q_n^{(\lambda)}f_nq_n^{(\lambda)}\leq \lambda q_n^{(\lambda)}$;
 		
 		\item If we set $$q^{\lambda}= \bigwedge_{n=1}^{\infty} q_n^{(\lambda)},$$ then
 $$\lambda \varphi^w({\bf 1}_\M -q)\lesssim {[w]}_{A_1}{\|f\|}_{L_1^w(\M)}.$$
 	\end{enumerate}
 \end{lemma}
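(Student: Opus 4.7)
The plan is to adapt Cuculescu's classical construction to the weighted setting, with the $A_1$ hypothesis entering only at the final distributional estimate. Setting $q_0^{(\lambda)} = {\bf 1}_{\M}$, I would define inductively
\begin{equation*}
q_n^{(\lambda)} = \chi_{[0,\lambda]}\bigl(q_{n-1}^{(\lambda)} f_n q_{n-1}^{(\lambda)}\bigr),
\end{equation*}
interpreted as the spectral projection of the self-adjoint operator $q_{n-1}^{(\lambda)} f_n q_{n-1}^{(\lambda)} \in \mathcal{M}_n$ on $[0,\lambda]$. Because $q_{n-1}^{(\lambda)} \in \mathcal{M}_{n-1} \subseteq \mathcal{M}_n$ and $f_n \in \mathcal{M}_n$, property (i) is immediate; (ii) is a standard fact about spectral projections; and (iii) is the defining inequality of the cut-off. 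The monotonicity $q_n^{(\lambda)} \leq q_{n-1}^{(\lambda)}$ is built in: the complement of $q_{n-1}^{(\lambda)}$ annihilates $q_{n-1}^{(\lambda)} f_n q_{n-1}^{(\lambda)}$ and is therefore included in the spectral projection above.

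For the distributional bound (iv), set $p_n = q_{n-1}^{(\lambda)} - q_n^{(\lambda)}$, so that ${\bf 1}_{\M} - q = \sum_{n \geq 1} p_n$ and each $p_n$ is the spectral projection of $q_{n-1}^{(\lambda)} f_n q_{n-1}^{(\lambda)}$ on $(\lambda,\infty)$. Then
\begin{equation*}
\lambda p_n \leq p_n\bigl(q_{n-1}^{(\lambda)} f_n q_{n-1}^{(\lambda)}\bigr) p_n = p_n f_n p_n,
\end{equation*}
the last identity coming from $p_n \leq q_{n-1}^{(\lambda)}$. Passing to $\varphi^w$, using cyclicity together with the centrality of $w$, and exploiting that $f_n = \mathbb{E}_n(f)$, the module property of $\mathbb{E}_n$, and $\varphi \circ \mathbb{E}_n = \varphi$, I would chain
\begin{equation*}
\lambda \varphi^w(p_n) \leq \varphi(p_n f_n p_n w) = \varphi\bigl(\mathbb{E}_n(p_n f)\, w\bigr) = \varphi\bigl(p_n f\, \mathbb{E}_n(w)\bigr).
\end{equation*}
At this stage I invoke the martingale $A_1$ bound \eqref{A1weight}, which says $\mathbb{E}_n(w) \leq [w]_{A_1} w$ as central positive operators; since $p_n f p_n \geq 0$ and both $w, \mathbb{E}_n(w)$ commute with $\M$, this yields $\varphi(p_n f\, \mathbb{E}_n(w)) \leq [w]_{A_1}\varphi(p_n f w) = [w]_{A_1} \varphi^w(p_n f)$. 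Summing in $n$, using $\sum_n p_n = {\bf 1}_{\M} - q$ and the elementary inequality $\varphi^w(p a) \leq \varphi^w(a)$ valid for $a \geq 0$ and any projection $p$ (a consequence of cyclicity of $\varphi^w$), one arrives at
\begin{equation*}
\lambda \varphi^w({\bf 1}_{\M} - q) \leq [w]_{A_1}\, \varphi^w\bigl(({\bf 1}_{\M} - q) f\bigr) \leq [w]_{A_1}\, \|f\|_{L_1^w(\M)}.
\end{equation*}

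The main obstacle is the identity $\varphi(p_n f_n w) = \varphi(p_n f\, \mathbb{E}_n(w))$: because $w$ is not $\mathcal{D}_n$-measurable one cannot simply exchange it with the conditional expectation, as one would do in the unweighted case. The saving is that $w$ is central, which allows us to shuttle it across the trace, and that the $A_1$ condition---translated via \eqref{A1weight} into the pointwise-operator bound $\mathbb{E}_n(w) \leq [w]_{A_1} w$---absorbs $\mathbb{E}_n(w)$ back into $w$ at the cost of a single multiplicative constant. Once this step is in place, the rest of the argument is routine telescoping and positivity, and the whole estimate reduces to its classical unweighted shape.
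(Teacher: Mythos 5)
The paper offers no proof of this lemma at all---it is imported verbatim from \cite[Theorem 3.4]{GJOW2022}---so your proposal has to stand on its own as a weighted adaptation of Cuculescu's construction. The genuinely weighted part, item (iv), you handle correctly and by what is surely the intended mechanism: $\lambda\varphi^w(p_n)\leq\varphi(p_nf_np_nw)$, then trace-duality of $\mathbb{E}_n$ together with centrality of $w$ to reach $\varphi(p_nf\,\mathbb{E}_n(w))$, then the dyadic $A_1$ bound \eqref{A1weight} (with constant $[w]_{A_1}$) to absorb $\mathbb{E}_n(w)$ into $w$, and finally telescoping and $\varphi^w(({\bf 1}_\M-q)f)\leq\|f\|_{L_1^w(\M)}$. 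That is the only new ingredient beyond the classical unweighted statement, and it is sound.

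There is, however, a concrete flaw in the construction itself. With your definition $q_n^{(\lambda)}=\chi_{[0,\lambda]}\bigl(q_{n-1}^{(\lambda)}f_nq_{n-1}^{(\lambda)}\bigr)$ the sequence is \emph{not} decreasing, and your justification is backwards: since ${\bf 1}_\M-q_{n-1}^{(\lambda)}$ lies in the kernel of $q_{n-1}^{(\lambda)}f_nq_{n-1}^{(\lambda)}$ and $0\in[0,\lambda]$, the spectral projection \emph{contains} ${\bf 1}_\M-q_{n-1}^{(\lambda)}$, i.e.\ $q_n^{(\lambda)}\geq {\bf 1}_\M-q_{n-1}^{(\lambda)}$, which is the opposite of $q_n^{(\lambda)}\leq q_{n-1}^{(\lambda)}$. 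A two-point commutative example makes this explicit: if $q_{n-1}^{(\lambda)}=(1,0)$ and $f_n=(\lambda/2,\,2\lambda)$, then $q_{n-1}^{(\lambda)}f_nq_{n-1}^{(\lambda)}=(\lambda/2,0)$ and $q_n^{(\lambda)}=({1},{1})\not\leq q_{n-1}^{(\lambda)}$, so $p_n=q_{n-1}^{(\lambda)}-q_n^{(\lambda)}$ is not even positive; consequently the identification of $p_n$ with $\chi_{(\lambda,\infty)}\bigl(q_{n-1}^{(\lambda)}f_nq_{n-1}^{(\lambda)}\bigr)$ and the inequality $\lambda p_n\leq p_nf_np_n$ collapse as written. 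The standard repair is to set $q_n^{(\lambda)}:=q_{n-1}^{(\lambda)}\,\chi_{[0,\lambda]}\bigl(q_{n-1}^{(\lambda)}f_nq_{n-1}^{(\lambda)}\bigr)$, which is a projection because $q_{n-1}^{(\lambda)}$ commutes with $q_{n-1}^{(\lambda)}f_nq_{n-1}^{(\lambda)}$ and hence with all of its spectral projections. With this definition (i)--(iii) and monotonicity hold, one gets $p_n=\chi_{(\lambda,\infty)}\bigl(q_{n-1}^{(\lambda)}f_nq_{n-1}^{(\lambda)}\bigr)\leq q_{n-1}^{(\lambda)}$, and the remainder of your argument goes through verbatim. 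A last small point worth a sentence in a complete write-up: for $f\in L_1^w(\M)$ one should note that $\mathbb{E}_n(f)$ is well defined via the averaging formula \eqref{Emn}, since the $A_1$ condition forces local integrability of $f$.
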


 In what follows, we simply write $(q_n)_{n\geq 1}$ for the sequence of Cuculescu's projections $(q_n^{(\lambda)})_{n\geq 1}$ and $q$ for the corresponding $q^\lambda$. Set
 \begin{equation}\label{pn}
 	p_n = q_{n-1}-q_n, \quad n\geq 1,
 \end{equation}
 with the convention that $q_0=\bf 1_\M$.
 Then
 \begin{equation}\label{sump}
 	\sum_{n\geq1} p_n={\bf 1_\mathcal M}-q.
 \end{equation}

Let $w\in A_1$ and $f\in L_1^w(\mathcal{M})$ be positive. Consider the following noncommutative Calder\'{o}n-Zygmund decomposition of $f$ as given in \cite{CCP2022}:
\begin{equation}\label{dec}
 		f=g+b_{\rm d}+b_{\rm off}
 	\end{equation}
with
$$g=qfq+\sum_{k\geq1} p_kf_kp_k,$$
$$b_{\rm d}=\sum_{k\geq1}b_{{\rm d},k}=\sum_{k\geq1} p_k(f-f_k)p_k$$
and
$$b_{\rm off}=\sum_{k\geq1}b_{{\rm off},k}+b_{{\rm off},k}^*=\sum_{k\geq1} p_k(f-f_k)q_k+q_k(f-f_k)p_k.$$
Note that by Cuculescue's commutation relation (see Lemma \ref{Cuculescu}(ii)), we also have
$$b_{\rm off}=\sum_{k\geq1}b_{{\rm off},k}+b_{{\rm off},k}^*=\sum_{k\geq1} p_kfq_k+q_kfp_k.$$

Similar to the unweighted setting, using Lemma \ref{Cuculescu}, we may prove the following elementary properties of $g$, $b_{\rm d}$ and $b_{\rm off}$.

 \begin{lemma}\label{NewCZ}
 The decomposition $\eqref{dec}$ satisfies
 \begin{enumerate}[\rm (i)]
 \item $\|g\|_{L_1^w(\mathcal{M})}\lesssim {[w]}_{A_1} \|f\|_{L_1^w(\mathcal{M})}$
		and
$\|g\|_{L_{\infty}(\mathcal{M})}\lesssim \lambda$;

 \item $$\sum_{k\geq 1}\|b_{{\rm d},k}\|_{L^w_1(\mathcal{M})}=\sum_{k\geq1}\|p_k(f-f_k)p_k\|_{L^w_1(\mathcal{M})}\lesssim [w]_{A_1}\|f\|_{L^w_1(\mathcal{M})};
		$$
	\item $\mathbb E_k(b_{{\rm d}, k})=\mathbb E_k(b_{{\rm off}, k})=0$, for each $k\geq 1$.
\end{enumerate}
 \end{lemma}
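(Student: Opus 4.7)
The plan is to adapt the unweighted noncommutative Calder\'{o}n--Zygmund computations to the present weighted framework, exploiting three features: the centrality of $w$ inside $\mathcal{M}$ (so that $w$ commutes with every $p_k$, $q_k$, and $q$), the regularity of the dyadic filtration recorded in \eqref{regular}, and the weighted Cuculescu bound in Lemma \ref{Cuculescu}(iv). Part (iii) is immediate: since $p_k,q_k\in \mathcal{M}_k$ and $\mathbb{E}_k$ is a bimodule map over $\mathcal{M}_k$ with $\mathbb{E}_k(f)=f_k$, we obtain
$$
\mathbb{E}_k(b_{d,k})=p_k\,\mathbb{E}_k(f-f_k)\,p_k=0,
$$
and similarly $\mathbb{E}_k(b_{\mathrm{off},k})=0$.

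For the $L_\infty$ estimate in (i), I would first derive the operator inequality $p_k f_k p_k\leq 2^d\lambda\,p_k$ from $p_k\leq q_{k-1}$, the regularity bound $f_k\leq 2^d f_{k-1}$, and Lemma \ref{Cuculescu}(iii) applied at level $k-1$. Together with $qfq\leq\lambda q$ and the mutual orthogonality of $q,p_1,p_2,\dots$ summing to $\mathbf{1}_{\mathcal{M}}$, this gives $g\leq 2^d\lambda\,\mathbf{1}_{\mathcal{M}}$, hence $\|g\|_\infty\lesssim\lambda$. For the $L_1^w$ bounds, the common computational device is the identity
$$
\varphi^w(p_k\,a\,p_k)\;=\;\varphi(p_k\,a\,w),\qquad a\in L_1(\mathcal{M}),
$$
(with the analogous identity for $q$ in place of $p_k$), which follows from $wp_k=p_k w$ and the tracial property of $\varphi$. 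Applied to $a=f_k$ together with $p_k f_k p_k\leq 2^d\lambda p_k$, it yields
$$
\sum_k\|p_k f_k p_k\|_{L_1^w(\mathcal{M})}\;\leq\;2^d\lambda\sum_k\varphi^w(p_k)\;=\;2^d\lambda\,\varphi^w(\mathbf{1}_{\mathcal{M}}-q)\;\lesssim\;[w]_{A_1}\|f\|_{L_1^w(\mathcal{M})}
$$
by Lemma \ref{Cuculescu}(iv). For the corresponding sum with $f$ in place of $f_k$, I would collapse the sum \emph{before} invoking positivity:
$$
\sum_k\|p_k f p_k\|_{L_1^w(\mathcal{M})}\;=\;\sum_k\varphi(p_k fw)\;=\;\varphi\big((\mathbf{1}_{\mathcal{M}}-q)fw\big)\;\leq\;\varphi(fw)\;=\;\|f\|_{L_1^w(\mathcal{M})},
$$
using $fw=w^{1/2}fw^{1/2}\geq 0$ and $\mathbf{1}_{\mathcal{M}}-q\leq\mathbf{1}_{\mathcal{M}}$. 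The same centrality manipulation gives $\varphi^w(qfq)\leq\|f\|_{L_1^w(\mathcal{M})}$, which combined with the first sum completes the $L_1^w$-half of (i). For (ii), the Banach-space triangle inequality in $L_1^w(\mathcal{M})$,
$$
\|p_k f p_k - p_k f_k p_k\|_{L_1^w(\mathcal{M})}\;\leq\;\|p_k f p_k\|_{L_1^w(\mathcal{M})}+\|p_k f_k p_k\|_{L_1^w(\mathcal{M})},
$$
together with the two sums above delivers $\sum_k\|b_{d,k}\|_{L_1^w(\mathcal{M})}\lesssim[w]_{A_1}\|f\|_{L_1^w(\mathcal{M})}$.

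The only subtle point is the centrality-plus-trace shuffling that underpins $\varphi^w(p_k a p_k)=\varphi(p_k a w)$; once this identity is at hand, the rest of the argument is a routine Cuculescu bookkeeping, with the $[w]_{A_1}$ factor entering solely through Lemma \ref{Cuculescu}(iv). I do not anticipate a genuine obstacle.
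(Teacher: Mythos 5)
Your argument is correct in substance and reaches all three assertions, but the mechanism you use for the key $L_1^w$ estimate differs from the paper's. The paper handles $\sum_{k}\|p_kf_kp_k\|_{L_1^w(\M)}$ in one line by moving the conditional expectation onto the weight: since $p_k\in\M_k$ and $w$ is central,
$\varphi^w(p_kf_kp_k)=\varphi(p_kf_kw)=\varphi(p_kf\,\E_k(w))\le [w]_{A_1}\varphi(p_kfw)=[w]_{A_1}\varphi^w(p_kf)$,
using the martingale $A_1$ bound \eqref{A1weight}; summing and collapsing $\sum_k p_k=\1_\M-q$ (see \eqref{sump}) gives the bound $[w]_{A_1}\|f\|_{L_1^w(\M)}$ directly, with no reference to $\lambda$, to the regularity constant $2^d$, or to the weak-type estimate in Lemma \ref{Cuculescu}(iv). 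You instead use the operator inequality $p_kf_kp_k\le 2^d\lambda\,p_k$ together with Lemma \ref{Cuculescu}(iv); this is also legitimate and yields the same conclusion up to a dimensional constant, since Lemma \ref{Cuculescu}(iv) already carries the factor $[w]_{A_1}$. Your remaining steps (the collapse $\sum_k\varphi^w(p_kfp_k)=\varphi^w((\1_\M-q)f)\le\|f\|_{L_1^w(\M)}$, the bound for $qfq$, the triangle inequality for (ii), and the module argument for (iii)) coincide with what the paper does implicitly, the paper deferring the $L_\infty$ bound and (iii) to \cite[Lemma 1.1]{CCP2022}.

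One caveat: your inequality $p_kf_kp_k\le 2^d\lambda\,p_k$ invokes Lemma \ref{Cuculescu}(iii) at level $k-1$, which the paper states only for $n\ge1$; with the convention $q_0=\1_\M$ the case $k=1$ is not covered, and indeed $p_1f_1p_1$ need not be $\lesssim\lambda\,p_1$ when the first-generation averages already exceed $\lambda$. The same first-generation issue is latent in the cited $L_\infty$ bound for $g$ (in the source the dyadic filtration is indexed over $\Z$, so the construction starts at scales where the averages fall below $\lambda$), so it is not specific to your write-up; but note that the paper's route for the $L_1^w$ estimates in (i) and (ii) is insensitive to it, whereas your route inherits it through the $k=1$ term. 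Handling $k=1$ separately (or adopting the indexing convention of the cited construction) closes this gap; otherwise your proposal is sound.
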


\begin{proof}
Observe that for each $k\geq 1$, we have
\begin{align*}
\| p_k f_k p_k\|_{L^w_1(\mathcal{M})}&=\varphi^w(p_kf_k)=\varphi(p_kf_kw)=\varphi(p_kf \mathbb E_k(w))\\
& \leq [w]_{A_1}\varphi(p_kf w)=[w]_{A_1} \varphi^w(p_k f).
\end{align*}
The desired estimates of $g$ and $b_{\rm d}$ follow. Other statements can be found in \cite[Lemma 1.1]{CCP2022}.
\end{proof}

 According to \eqref{Emn}, we can then express the projections $q_n$ and $p_n$ as
 \begin{equation}\label{q}
 	q_n=\sum_{Q\in D_n} q_Q\chi_Q
 \end{equation}
 and
 \begin{equation}\label{pe}
 	p_n=\sum_{Q\in D_{n-1}} q_Q\chi_Q- \sum_{Q\in D_{n}} q_Q\chi_Q=\sum_{Q\in D_{n}}( q_{\widehat{Q}}-q_Q)\chi_Q=:\sum_{Q\in D_{n}}p_Q\chi_Q,
 \end{equation}
 where $p_Q\in \mathcal{P}(\N)$ for each $Q\in D_n$ and $\widehat{Q}$ is the unique cube in $D_{n-1}$ such that $Q\subset \widehat{Q}$.
 One can easily verify that for each $n\geq1$ and $Q\in D_{n}$,
 \begin{equation*}\label{pq}
 	p_Qq_Q=q_Qp_Q=0.
 \end{equation*}
 Based on \eqref{q} and \eqref{pe}, we can rewrite $b_{\rm d}$ and $b_{\rm off}$ as follows:
 \begin{eqnarray}\label{bd}
 	b_{\rm d}=\sum_{k\geq1} \sum_{Q\in D_k} p_Q(f-f_Q)p_Q\chi_Q
 \end{eqnarray}
 and
 \begin{eqnarray}\label{boff}
 	b_{\rm off}=\sum_{k\geq1} \sum_{Q\in D_{k}} p_Qfq_Q\chi_Q+q_Qfp_Q\chi_Q.
 \end{eqnarray}
Now set
\begin{equation}\label{def-zeta}
\zeta={\bf 1_\M}-\bigvee_{Q} p_Q\chi_{5Q}.
\end{equation}
Then, we have the following crucial facts.

\begin{lemma}\label{lem1}
The projection $\zeta$ satisfies: 
\begin{enumerate}[\rm (i)]
\item The estimate:
$$
\lambda \varphi^w\left(\mathbf{1}_{\mathcal{M}}-\zeta\right) \lesssim [w]_{A_1}\|f\|_{L_1^w(\mathcal{M})};
$$

\item The cancellation property:
$$\zeta(x) p_n(y)=p_n(y) \zeta(x)=0$$
whenever $y \in Q \in {D}_n$ and $x \in 5 Q$;

\item The cancellation properties:
$$\zeta(x) b_{\emph{d},n}(y)\zeta(x)=0,\quad \zeta(x) (b_{\emph{off},n}(y)+b_{{\rm off},n}^*(y))\zeta(x)=0$$
whenever $x,y\in \R^d$ such that $y\in 5\sqrt{d} Q_{x,n}$. Here and below, $Q_{x,n}\in D_n$ stands for the cube containing $x$.
\end{enumerate}
 \end{lemma}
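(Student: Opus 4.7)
The plan is to prove the three parts of Lemma~\ref{lem1} in order: (i) via subadditivity of $\varphi^w$ on projections, the $A_1$ self-improvement of $w$, and Lemma~\ref{Cuculescu}(iv); (ii) by direct inspection of \eqref{def-zeta}; and (iii) by combining (ii) with an elementary geometric fact about dyadic cubes at the same level.

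For (i), I would first rewrite $\mathbf{1}_{\mathcal M}-\zeta=\bigvee_Q p_Q\chi_{5Q}$ and use subadditivity of $\varphi^w$ on projections to bound
$$\varphi^w(\mathbf{1}_{\mathcal M}-\zeta)\leq\sum_Q\varphi^w(p_Q\chi_{5Q})=\sum_Q\nu(p_Q)\,w(5Q).$$
The weight enters through the standard $A_1$ self-improvement: since $w\in A_1$,
$$w(5Q)\leq [w]_{A_1}|5Q|\operatorname{ess\,inf}_{5Q}w\leq 5^d[w]_{A_1}|Q|\operatorname{ess\,inf}_Q w\leq 5^d[w]_{A_1}w(Q).$$
Combined with \eqref{sump} and \eqref{pe}, this yields $\varphi^w(\mathbf{1}_{\mathcal M}-\zeta)\lesssim[w]_{A_1}\varphi^w(\mathbf{1}_{\mathcal M}-q)$, and Lemma~\ref{Cuculescu}(iv) closes the argument.

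For (ii), the definition \eqref{def-zeta} gives $\zeta(x)\leq \mathbf{1}_{\mathcal N}-p_Q$ whenever $x\in 5Q$, so $\zeta(x)p_Q=p_Q\zeta(x)=0$; for $y\in Q\in D_n$, formula \eqref{pe} identifies $p_n(y)$ with $p_Q$. For (iii), using \eqref{bd} and \eqref{boff}, only the unique cube $Q_{y,n}\in D_n$ containing $y$ contributes at the point $y$, so
\begin{align*}
b_{\mathrm{d},n}(y)&=p_{Q_{y,n}}\bigl(f(y)-f_{Q_{y,n}}\bigr)p_{Q_{y,n}},\\
b_{\mathrm{off},n}(y)+b_{\mathrm{off},n}^*(y)&=p_{Q_{y,n}}f(y)q_{Q_{y,n}}+q_{Q_{y,n}}f(y)p_{Q_{y,n}}.
\end{align*}
A triangle-inequality computation, using that $Q_{x,n}$ and $Q_{y,n}$ share the same side length $2^{-n}$, shows that the hypothesis $y\in 5\sqrt{d}\,Q_{x,n}$ forces $x$ into a fixed dilate of $Q_{y,n}$; part (ii) then yields $\zeta(x)p_{Q_{y,n}}=p_{Q_{y,n}}\zeta(x)=0$, which annihilates both right-hand sides above when multiplied by $\zeta(x)$ on either side.

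The main obstacle I anticipate is the constant bookkeeping in (iii): the dilation factor used in \eqref{def-zeta} must be chosen large enough (roughly at least $5\sqrt{d}+2$) so that the geometric implication $y\in 5\sqrt{d}\,Q_{x,n}\Rightarrow x\in C Q_{y,n}$ for the appropriate $C$ actually holds in every dimension. Apart from this, the weighted hypothesis enters only in (i), via the self-improvement $w(cQ)\lesssim_c[w]_{A_1}w(Q)$, which is the natural weighted replacement for the Lebesgue-measure estimate used in the unweighted proof from \cite{CCP2022}.
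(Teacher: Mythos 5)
Your treatment of (i) and (ii) is correct and coincides with the paper: the paper only writes out (i), and it runs exactly as you do (subadditivity over the projections $p_Q\chi_{5Q}$, the bound $w(5Q)\lesssim w(Q)$ — obtained there from Lemma~\ref{weightprop}(ii), while you use the $A_1$ definition directly, which is equivalent — and then Lemma~\ref{Cuculescu}(iv)); your version is in fact slightly more careful, since the paper writes an equality where the supremum of projections only gives an inequality. Part (ii) is immediate from \eqref{def-zeta} and \eqref{pe}, as you say; the paper defers (ii) and (iii) to the unweighted references.

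For (iii), however, the obstacle you flag at the end is not harmless ``constant bookkeeping'': with \eqref{def-zeta} exactly as written (dilate $5$), your argument does not close, and no choice of reading rescues it. Writing $\ell=2^{-n}$, the hypothesis $y\in 5\sqrt{d}\,Q_{x,n}$ only gives $\|y-c_{Q_{x,n}}\|_\infty\le \tfrac{5\sqrt{d}}{2}\ell$, whence by the triangle inequality $\|x-c_{Q_{y,n}}\|_\infty\le \tfrac{5\sqrt{d}+2}{2}\ell$, i.e.\ $x\in(5\sqrt{d}+2)Q_{y,n}$ and nothing better; the implication $y\in 5\sqrt{d}\,Q_{x,n}\Rightarrow x\in 5Q_{y,n}$ that you need in order to invoke (ii) fails for every $d\ge 1$ (already in $d=1$ one can place $y$ at the far end of $5Q_{x,1}$ and $x$ at the opposite edge of its interval, so that $x\notin 5Q_{y,1}$), and for $d\ge 2$ it fails by a factor of order $\sqrt{d}$, not just additively. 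Since $\zeta(x)$ annihilates only those $p_{Q'}$ with $x\in 5Q'$, there is no alternative route to $\zeta(x)p_{Q_{y,n}}=0$. So the real issue is an inconsistency between \eqref{def-zeta} and the statement of (iii) in the paper (inherited from adapting the unweighted sources, where the auxiliary projection carries a dimension-dependent dilate): the fix you suggest — defining $\zeta$ with $\chi_{cQ}$ for $c\ge 5\sqrt{d}+2$, which changes nothing in (i) beyond the constant and makes (ii) and hence (iii) go through verbatim — is the correct one, and it is also what the application in Section~\ref{sec4} genuinely requires, since there the cancellation is used for $|x-y|\le 2^{-n+1}\sqrt{d}$. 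In a final write-up you should therefore not leave this as an anticipated obstacle but state the enlarged dilate explicitly (or restate the hypothesis of (iii) accordingly); with that modification your proof of (iii), via \eqref{bd}, \eqref{boff} and part (ii), is complete.
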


\begin{proof}
The proof is similar to the unweighted case (\cite[Lemma 3.4, Lemma 3.8]{Ca2018} or \cite[Lemma 2.1]{CCP2022}). We include a short proof of (i) for the convenience of the reader. By Lemma \ref{weightprop}(ii), we have
\begin{align*}
\lambda \varphi^w\left(\mathbf{1}_{\mathcal{M}}-\zeta\right)&=
\lambda \varphi^w\Big(\sum_{Q} p_Q\chi_{5Q}\Big)
=\lambda \sum_Q\nu(p_Q)w(5Q)\\
&\lesssim \lambda \sum_Q\nu(p_Q)w(Q)=\lambda \varphi^w({\bf 1_\M}-q)\lesssim [w]_{A_1}\|f\|_{L_1^w(\M)},
\end{align*}
where the last inequality is due to Lemma \ref{Cuculescu}(iv).
\end{proof}

\medskip

\section{Weighted estimate for maximal singular integral operators}\label{sec4}

In this section, we provide the proof of the weighted weak-type estimate for maximal singular integrals, namely Theorem \ref{main-maximal}.  Before that, we state several observations first.

Let $\psi$ be a smooth radial positive function supported in the annulus $[1, 2]$ such that
$$
\sum_{i\in\Z}\psi_i(x):=\sum_{i\in\Z}\psi\big(\frac{2^i x}{\sqrt{d}}\big)=1, \quad \forall x\neq 0.
$$
Such $\psi$ will be fixed throughout this section. For any $i\in\Z$, set
$$
K_i(x,y)=K(x,y)\psi_i(x-y).
$$
Then, given a reasonable $f\in L_1^w(\M)$, we rewrite
$$
T_\varepsilon f(x)=\sum_{i\in\Z} \int_{|x-y|>\varepsilon}K_i(x,y)f(y)dy.$$
Set $j_\varepsilon=[\rm{log}_2(\frac {2\sqrt{d}} {\varepsilon})]$. Then, it is easy to see that, for any fixed $x\in \R^d\setminus\{0\}$ and $i\geq j_\varepsilon+1$,
$$
 \{y\in \R^d: K_i(x,y)\neq 0\} \cap \{y\in\R^d: |x-y|>\varepsilon\}= \emptyset;
$$
and for $i\leq j_\varepsilon-1$, we have
$$
 \{y\in \R^d: K_i(x,y)\neq 0\} \subset \{y\in\R^d: |x-y|>\varepsilon\}.
$$
Therefore, we may write
\begin{equation}\label{reduction}
T_\varepsilon f(x)=\sum_{i< j_\varepsilon} \int_{\R^d}K_i(x,y)f(y)dy+\int_{|x-y|>\varepsilon}K_{j_\varepsilon}(x,y)f(y)dy.
\end{equation}
For brevity, we denote the second term of \eqref{reduction} as $T_{\varepsilon,j\varepsilon}f(x)$. The following result enables us to reduce the study of $(T_\varepsilon)_{\varepsilon>0}$ to its lacunary subsequence.

\begin{proposition}\label{keyprop}
Given a weight ${w}\in A_1$, let $T$ be a Calder\'{o}n-Zygmund operator satisfying the assumptions in Theorem \ref{main-maximal}. Then for any operator-valued positive function $f \in L_1^w(\mathcal{M})$, we have the following estimates:
\begin{enumerate}[{\rm (i)}]
\item  For any $\lambda>0$, there exists a projection $e\in \mathcal{M}$ such that
$$
\sup _{\varepsilon>0}\left\|e\left(T_{\varepsilon,j\varepsilon}f\right) e\right\|_{\infty} \lesssim \lambda \text { and } \lambda\varphi^w\left( {\bf 1_\M}-e\right) \lesssim {\|f\|_{L_1^w(\M)}}.
$$
\item $\left(T_{\varepsilon,j\varepsilon}\right)_{\varepsilon>0}$ is bounded from $L_{p}^w(\mathcal M)$ to $L_{p}^w(\mathcal M;\ell_\infty)$ for all $1<p < \infty$.
\end{enumerate}
\end{proposition}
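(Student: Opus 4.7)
Plan. My proof will reduce the family $(T_{\varepsilon,j_\varepsilon})_{\varepsilon>0}$ to the noncommutative Hardy-Littlewood maximal operator $M_{HL}$ and then invoke the weighted noncommutative maximal inequalities established by Ga\l{\c{a}}zka, Jiao, Os\c{e}kowski and Wu \cite{GJOW2022}, from which both (i) and (ii) follow essentially simultaneously. The crucial point is that once the frequency scale $j_\varepsilon$ is fixed, the operator $T_{\varepsilon,j_\varepsilon}$ is effectively an averaging operator at scale $\varepsilon\approx 2^{-j_\varepsilon}$ acting on $f$, so its behaviour is controlled by the Hardy-Littlewood maximal function rather than by any singular integral regularity of $K$.

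First I would exploit the structure of the truncated kernel $K_{j_\varepsilon}(x,y)\chi_{|x-y|>\varepsilon}$. By the choice of $j_\varepsilon$, it is supported in the annulus $|x-y|\in [\sqrt{d}\,2^{-j_\varepsilon},\,2\sqrt{d}\,2^{-j_\varepsilon}]$ (of volume $\sim \varepsilon^d$), and by the size condition \eqref{sizec} it is bounded in modulus by $C_d\cdot 2^{j_\varepsilon d}$. Splitting the scalar kernel into its real/imaginary and positive/negative parts, $K_{j_\varepsilon}=\sum_{\ell=1}^{4}\sigma_\ell K_{j_\varepsilon}^{(\ell)}$ with $K_{j_\varepsilon}^{(\ell)}\geq 0$ and $K_{j_\varepsilon}^{(\ell)}\leq |K_{j_\varepsilon}|$, I obtain four positive operators $G_{\varepsilon,\ell}f(x):=\int K_{j_\varepsilon}^{(\ell)}(x,y)\chi_{|x-y|>\varepsilon}f(y)\,dy$ satisfying, for positive $f$,
\begin{equation*}
0\leq G_{\varepsilon,\ell}f(x)\lesssim \frac{1}{|B(x,2\sqrt{d}\,2^{-j_\varepsilon})|}\int_{B(x,2\sqrt{d}\,2^{-j_\varepsilon})}f(y)\,dy \lesssim M_{HL}f(x),
\end{equation*}
where $M_{HL}$ denotes the noncommutative centered Hardy-Littlewood maximal operator.

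Next I would transfer the weighted maximal bounds for $M_{HL}$ to $(T_{\varepsilon,j_\varepsilon})_{\varepsilon>0}$. For (i), the weighted weak-type $(1,1)$ Hardy-Littlewood inequality (a direct consequence of the weighted Doob inequality of \cite{GJOW2022}) produces, for each $\lambda>0$, a projection $e\in\mathcal{M}$ with $\lambda\,\varphi^w(\mathbf{1}_\M - e)\lesssim [w]_{A_1}\|f\|_{L_1^w(\M)}$ and $\|e\,M_{HL}(f)\,e\|_\infty\lesssim \lambda$; combined with the self-adjoint bounds $-C\,M_{HL}f \leq \sigma_\ell G_{\varepsilon,\ell}f \leq C\,M_{HL}f$ summed over $\ell=1,\dots,4$, this yields $\|e\,(T_{\varepsilon,j_\varepsilon}f)\,e\|_\infty\lesssim \lambda$ uniformly in $\varepsilon>0$. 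For (ii), the analogous strong-type weighted maximal inequality of \cite{GJOW2022} for $M_{HL}$ furnishes a factorization $M_{HL}f=u\,b\,v$ with $u,v\in L_{2p}^w(\M)$ of norm $\lesssim \|f\|_{L_p^w(\M)}$ and $b\in L_\infty(\M)$; the same operator domination transfers this factorization (up to absolute constants and the factor of $4$) to each $T_{\varepsilon,j_\varepsilon}f$, producing the required bound in $L_p^w(\M;\ell_\infty)$.

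The principal obstacle will be executing the operator-valued domination rigorously in the noncommutative setting. Although the scalar inequality $|T_{\varepsilon,j_\varepsilon}f|\lesssim M_{HL}f$ is conceptually clear, implementing it at the level of $\Lambda_{1,\infty}^w(\M;\ell_\infty)$ and $L_p^w(\M;\ell_\infty)$ requires the decomposition of the complex-valued kernel into four positive pieces, independent verification of the four resulting self-adjoint operator inequalities, and a uniform argument over the \emph{continuous} parameter $\varepsilon>0$ rather than a discrete sequence; to address the latter, I would argue that on each interval $\{\varepsilon: j_\varepsilon = j\}$ the dependence of $T_{\varepsilon,j}f$ on $\varepsilon$ is absorbed by the ball average at scale $2^{-j}$, so the supremum over $\varepsilon$ reduces to the discrete Hardy-Littlewood supremum. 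Finally, the constants must be tracked to depend only on $d$, $[w]_{A_1}$, and the kernel constant in \eqref{sizec}.
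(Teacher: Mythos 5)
Your proposal is correct and takes essentially the same route as the paper: dominate $T_{\varepsilon,j_\varepsilon}f$ (for positive $f$, using the annulus support of $\psi_{j_\varepsilon}$ and the size condition \eqref{sizec}) by the averaging operator at scale $\sim 2^{-j_\varepsilon}$ --- the paper imports exactly this domination from \cite[Proposition 3.2]{HLX2020} --- and then invoke the weighted noncommutative Hardy--Littlewood maximal inequalities of \cite[Theorem 4.1]{GJOW2022}. The only adjustment needed is cosmetic: state the domination against the specific average $M_{2\sqrt{d}\,2^{-j_\varepsilon}}f$ rather than a pointwise object ``$M_{HL}f(x)$'', which does not exist as a single operator in the noncommutative setting; the majorant $b$ (resp.\ projection $e$) supplied by the maximal inequality for the family $(M_r f)_{r>0}$ plays that role, exactly as in your transfer step.
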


\begin{proof}
The discussion in \cite[Proposition 3.2]{HLX2020} shows that for any $\varepsilon>0$,
$$
-M_{2^{-j_\varepsilon+1}\sqrt{d}}\left(T_{\varepsilon,j\varepsilon}\right)_{\varepsilon>0}f(x)\lesssim
T_{\varepsilon,j\varepsilon}f(x)\lesssim M_{2^{-j_\varepsilon+1}\sqrt{d}}\left(T_{\varepsilon,j\varepsilon}\right)_{\varepsilon>0}f(x),
$$
where $M_r$ is the Hardy-Littlewood averaging operator defined as
$$
M_{r} f(x)=\frac{1}{r^d} \int_{|x-y| \leq r} f(y) d y, \quad x\in \mathbb{R}^d.
$$
Then, by the noncommutative weighted Hardy-Littlewood maximal inequalities established in \cite[Theorem 4.1]{GJOW2022}, we obtain the desired results.
\end{proof}

\begin{lemma}\label{keylemma2}
Let $1\leq p<\infty$. Given a weight ${w}\in A_1$, let $f\in L_1^w(\M)$ be positive. Suppose that $K$ satisfies the size condition and \eqref{Lr-Hc} with $r=p r_w'$, where $r_w$ is as given in Lemma \ref{weightprop} and $r_w'$ is the conjugate of $r_w$. For any $Q\in D_n$, denote by $\{Q_i\}_{i\leq n-1}$ a sequence of sets satisfying $Q\subset Q_i$, $|Q_i|\approx 2^{-id}$. Then
 $$\sum\limits_{i\leq n-1}|Q_i|\left(\frac{1}{|Q_i|}\int_{Q}{\|f(y)\|}_{L_1(\N)}\int_{Q_i}\big|K_{i, Q}(x, y)\big|^p w(x)d xdy\right)^{\frac 1 p}\lesssim {\|\chi_Qf\|}_{L_1^w(\M)}^{\frac 1 p},$$
 where $K_{i, Q}(x, y)=K_i(x, y)-K_i\left(x, c_{Q}\right)$, and $c_{Q}$ denotes the centre of $Q$.
\end{lemma}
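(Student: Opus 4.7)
The plan is to proceed in three stages: first extract the weight from the kernel integral using the reverse Hölder property of $A_1$ weights, then peel off the $L_1^w$-norm of $f$, and finally match the remaining unweighted kernel sum to the $L_{pr_w'}$-Hörmander condition.

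\textbf{Stage 1 (Weight extraction).} I would apply Hölder's inequality with conjugate exponents $r_w,r_w'$ on the inner integral over $Q_i$:
$$\int_{Q_i}|K_{i,Q}(x,y)|^p w(x)\,dx \leq \Big(\int_{Q_i}|K_{i,Q}(x,y)|^{pr_w'}dx\Big)^{1/r_w'}\Big(\int_{Q_i} w^{r_w}\Big)^{1/r_w}.$$
Since $w\in A_1 \subset RH_{r_w}$ (Lemma \ref{weightprop}(i)), the reverse Hölder inequality gives $\big(\int_{Q_i}w^{r_w}\big)^{1/r_w}\lesssim |Q_i|^{1/r_w-1}w(Q_i)$; combining with $A_1$ and $Q\subset Q_i$ yields $\big(\int_{Q_i}w^{r_w}\big)^{1/r_w}\lesssim |Q_i|^{1/r_w} w(y)$ for a.e.\ $y\in Q$. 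The upshot is the pointwise bound
$$\int_{Q_i}|K_{i,Q}(x,y)|^p w(x)\,dx\lesssim |Q_i|^{1/r_w}\, w(y)\,\Big(\int_{Q_i}|K_{i,Q}(x,y)|^{pr_w'}dx\Big)^{1/r_w'}.$$

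\textbf{Stage 2 (Separating $f$ and $K$).} Substituting the Stage 1 bound and tracking the powers, each summand becomes
$$|Q_i|^{1-1/(pr_w')}\Big(\int_Q \|f(y)\|_{L_1(\N)} w(y)\big[\,\cdot\,\big]^{1/r_w'} dy\Big)^{1/p},$$
where $[\,\cdot\,]$ denotes the $x$-integral of $|K_{i,Q}(x,y)|^{pr_w'}$ over $Q_i$. Bounding $[\,\cdot\,]$ by its supremum over $y\in Q$ allows the $y$-integral to factor off as $\|\chi_Q f\|_{L_1^w(\M)}$, producing the clean expression
$$\|\chi_Q f\|_{L_1^w(\M)}^{1/p}\cdot |Q_i|\,\Big(\frac{1}{|Q_i|}\sup_{y\in Q}\int_{Q_i}|K_{i,Q}(x,y)|^{pr_w'}dx\Big)^{1/(pr_w')}.$$

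\textbf{Stage 3 (Matching the Hörmander condition).} It remains to show that $\sum_{i\leq n-1}$ of the last factor is finite. I would decompose
$$K_{i,Q}(x,y)=K_Q(x,y)\,\psi_i(x-y)+K(x,c_Q)\big[\psi_i(x-y)-\psi_i(x-c_Q)\big].$$
For the first piece, $|\psi_i|\leq 1$, and its support in $x$ lies in the annulus $|x-c_Q|\approx 2^{-i}$ (because $i\leq n-1$ forces $|y-c_Q|\leq\sqrt{d}\,2^{-n-1}\ll 2^{-i}$). Setting $j=n-i$, this annulus coincides (up to $O(1)$ adjacent dyadic shells) with the annulus appearing in \eqref{Lr-Hc} at scale $j$, so the sum in $i$ is dominated by the Hörmander seminorm $\|K\|_{\H_{pr_w'}}$. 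For the second piece, the size condition gives $|K(x,c_Q)|\lesssim 2^{id}$ on the supporting annulus, and the Lipschitz bound $|\psi_i(x-y)-\psi_i(x-c_Q)|\lesssim 2^i|y-c_Q|\lesssim 2^{i-n}$ produces an $L^{pr_w'}$-contribution of the order $2^{i-n}$, summable in $i\leq n-1$ by a geometric series.

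\textbf{Expected main obstacle.} The delicate point is Stage 3: the $L_r$-Hörmander condition is stated for $K_Q$, not for the truncated kernel $K_{i,Q}$, so one must carefully bridge the two via the decomposition above. The "smooth-perturbation" term is the one that forces an appeal to the size condition \eqref{sizec} and to the smoothness of the auxiliary bump $\psi_i$; keeping track of the exponents $r_w,r_w',pr_w'$ and of the geometric scales $|Q_i|\approx 2^{-id}\approx (2^j\ell(Q))^d$ so that the powers collapse to produce the prefactor $|Q_i|^{1-1/(pr_w')}$ matching the Hörmander normalization is the principal bookkeeping difficulty.
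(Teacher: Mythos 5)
Your proposal is correct and follows essentially the same route as the paper: Hölder with exponents $r_w,r_w'$ on the $x$-integral, the reverse Hölder and $A_1$ properties to replace the averaged weight by $w(y)$ for $y\in Q\subset Q_i$, factoring out $\|\chi_Q f\|_{L_1^w(\M)}^{1/p}$, and then summing the normalized kernel quantity $|Q_i|\big(\tfrac{1}{|Q_i|}\sup_{y\in Q}\int_{Q_i}|K_{i,Q}(x,y)|^{pr_w'}dx\big)^{1/(pr_w')}$ over $i\le n-1$. The only difference is in your Stage 3: where you re-derive the bound on this quantity via the splitting $K_{i,Q}(x,y)=K_Q(x,y)\psi_i(x-y)+K(x,c_Q)[\psi_i(x-y)-\psi_i(x-c_Q)]$ (size condition plus smoothness of $\psi_i$ for the second piece), the paper simply invokes \cite[Lemma 4.2]{HLX2020}, which encapsulates exactly that computation, yielding $\int_{\R^d}|K_{i,Q}(x,y)|^{r}dx\lesssim 2^{id(r-1)}[{\rm m}_r(n-i)+2^{i-n}]^{r}$ and then summability from the $\H_{pr_w'}$ condition and a geometric series.
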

\begin{proof}
For the sake of brevity, denote
$$
{\rm (LHS)}:=\sum\limits_{i\leq n-1}|Q_i|\left(\frac{1}{|Q_i|}\int_{Q}{\|f(y)\|}_{L_1(\N)}\int_{Q_i}\big|K_{i, Q}(x, y)\big|^p w(x)d xdy\right)^{\frac 1 p}.
$$
Applying the H\"{o}lder inequality and Lemma \ref{weightprop}(i), we get
\begin{align*}
&\int_{Q_i}\left|K_{i, Q}(x, y)\right|^{p} w(x)d x\\
&\leq \big|{Q_i}\big|\left(\frac{1}{\big|{Q_i}\big|}\int_{\R^d}\left|K_{i, Q}(x, y)\right|^{pr_w'}dx\right)^{\frac{1}{r_w'}}
\left( \frac{1}{\big|{Q_i}\big|}\int_{Q_i}w(x)^{r_w} dx \right)^{r_w}\\
&\lesssim \big|{Q_i}\big|\left(\frac{1}{\big|{Q_i}\big|}\int_{\R^d}\left|K_{i, Q}(x, y)\right|^{pr_w'}dx\right)^{\frac{1}{r_w'}}
\left( \frac{1}{\big|{Q_i}\big|}\int_{Q_i}w(x) dx \right).
\end{align*}
As $w\in A_1$, for any $y\in Q\subset Q_i$, we have
$$
 \left( \frac{1}{\big|{Q_i}\big|}\int_{Q_i}w(x) dx \right)
 \leq Mw(y)\lesssim w(y),
$$
where $M$ is the Hardy-Littlewood maximal operator. Thus,
\begin{align*}
&\int_{Q_i}\left|K_{i, Q}(x, y)\right|^{p} w(x)d x\lesssim \big|{Q_i}\big|\left(\frac{1}{\big|{Q_i}\big|}\int_{\R^d}\left|K_{i, Q}(x, y)\right|^{pr_w'}dx\right)^{\frac{1}{r_w'}}
w(y),
\end{align*}
which gives
\begin{align*}
{\rm (LHS)}\lesssim \sum_{i\leq n-1} \big|{Q_i}\big|\left(\int_{Q}{\|f(y)\|}_{L_1(\N)}\left(\frac{1}{\big|{Q_i}\big|}\int_{\R^d}\left|K_{i, Q}(x, y)\right|^{pr_w'}dx\right)^{\frac{1}{r_w'}} w(y)dy\right)^{\frac 1p}. \end{align*}
Thanks to \cite[Lemma 4.2]{HLX2020}, for any integer $i\leq n-1$, we have
\begin{equation*}\label{keyclaim}
\int_{\R^d}\left|K_{i, Q}(x, y)\right|^{r}dx\lesssim
2^{id(r-1)}[{\rm{m}}_{r}(n-i)+2^{(i-n)}]^{r},
\end{equation*}
where for $r\geq 1$,
$$
{\rm{m}}_{r}^i(j)=\sup \limits_{y \in Q}\left({\big(2^{j} \ell(Q)\big)}^{d(r-1)} \int_{2^j \ell(Q) \leq\left|x-c_Q\right| <2^{j+1} \ell(Q)}{|{K}_{i,Q}(x,y)|}^r d x\right)^{\frac{1}{r}}.
$$
Then, taking $r=pr_w'$, since $|{Q_i}|\approx 2^{-id}$, we further get
$$
{\rm (LHS)}\lesssim \sum\limits_{i\leq n-1} [{\rm{m}}_{pr_w'}^i(n-i)+2^{(i-n)}]
\left(\int_Q{\|f(y)\|}_{L_1(\N)}w(y)dy\right)^{\frac 1 p}.
$$
Since $K$ satisfies the $L_{p r_w'}$-H\"{o}rmander condition, it follows that
\begin{equation}\label{Lr}
\sum_{j\geq 1}{{\rm{m}}_{p r_w'}^i(j)}<\infty.
\end{equation}
Changing variables and using \eqref{Lr}, the desired inequality follows:
$$
{\rm (LHS)}\lesssim\sum_{k\geq 1}[{\rm{m}}_{pr_w'}^i(k)+2^{-k}]{\|\chi_Qf\|}_{L_1^w(\M)}^{\frac 1 p}\lesssim {\|\chi_Qf\|}_{L_1^w(\M)}^{\frac 1 p}.
$$
The assertion is verified.
\end{proof}

Now, we are ready to prove our first main result: Theorem \ref{main-maximal}.

\begin{proof}[Proof of Theorem \ref{main-maximal}]
For any $j\in \Z$, we introduce the following operator $T_{j}$:
$$T_{j}f(x)=\sum_{i< j} \int_{\R^d}K_i(x,y)f(y)dy.$$
Without loss of generality, we may assume that the kernel $K$ is real and $f$ is positive (see \cite{HLX2020}). According to Proposition \ref{keyprop}(i) and the quasi-triangle inequality in $\Lambda_{1,\infty}^w(\mathcal M;\ell_\infty)$, it is sufficient to show that there exists a projection $e\in \M$ such that for any $f\in L_1^w(\M)$
\begin{equation}\label{aimeq1}
\sup_{j\in\mathbb Z}\left\|e\left(T_j f\right) e\right\|_{\infty}\leq \lambda\quad \text{and} \quad \lambda \varphi^w\left({\bf1}_{\M}-e\right) \lesssim \|f\|_{L_1^w(\mathcal M)}.
\end{equation}

Consider the noncommutative Calder\'{o}n-Zygmund decomposition \eqref{dec} of $f$:
$$f=g+b_{\mathrm{d}}+b_{\mathrm{off}}.$$
To prove \eqref{aimeq1}, it suffices to show that for any $h\in \{g, b_{\mathrm{d}}, b_{\mathrm{off}}\}$, there exists a projection ${e}_h \in \mathcal{M}$
such that
\begin{equation}\label{aimeq}
\sup_{j\in\Z}\left\|{e}_h \left(T_j h\right) {e}_h\right\|_{\infty}\leq \lambda\quad \text{and} \quad \lambda \varphi^w\left(\textbf{1}_{\M}-e_h\right) \lesssim \|f\|_{L_1^w(\mathcal M)}.
\end{equation}
Indeed, once we have \eqref{aimeq}, then the desired result \eqref{aimeq1} follows by setting $e=e_g \wedge e_{b_{\rm d}} \wedge e_{b_{\rm off}}$.
In the following, we establish \eqref{aimeq} for $h=g, b_{\mathrm{d}}$ and $b_{\mathrm{off}}$ separately.

\medskip
{\bf Estimate of \eqref{aimeq}: the case of \bm{$h=g$}.} Since $(T_\varepsilon)_{\varepsilon>0}$ is bounded from $L_{p_0}^w(\M)$ to $L_{p_0}^w(\M;\ell_\infty)$ for some $p_0>1$, by the triangle inequality in $L_p(\M;\ell_\infty)$ and Proposition \ref{keyprop}(ii), we conclude that $(T_j)_{j\in\Z}$ is also bounded from $L_{p_0}^w(\M)$ to $L_{p_0}^w(\M;\ell_\infty)$. Therefore, there exists a positive operator $a \in L_p^w\left(\mathcal{M}\right)$ such that for any $j\in\Z$,
$$-a\leq T_j g\leq a\quad \text{and}\quad  {\|a\|}_{L_{p_0}^w(\M)}\lesssim{\|g\|}_{L_{p_0}^w(\M)}.$$
Take $e_g=\chi_{(0,\lambda]}(a)$. Then it follows that $\left\|e_g (T_j g) e_g\right\|_{\infty} \leq \lambda$. Moreover, as a consequence of the Chebychev inequality, we have
$$
\lambda\varphi^w\left(\textbf{1}_\M-e_1\right) \leq \frac{{\|a\|}_{L_{p_0}^w(\M)}^{p_0}}{\lambda^{{p_0}-1}}\lesssim \frac{{\|g\|}_{L_{p_0}^w(\M)}^{p_0}}{\lambda^{{p_0}-1}}\leq \frac{{\|g\|}_{L_1^w(\M)}{\|g\|}_{\infty}^{{p_0}-1}}{\lambda^{{p_0}-1}}\lesssim [w]_{A_1}\|f\|_{L_1^w(\M)},
$$
where the last inequality is due to Lemma \ref{NewCZ}(i). This completes the proof of \eqref{aimeq} for $h=g$.

\medskip
{\bf Estimate of \eqref{aimeq}: the case of \bm{$h=b_d$}.} We rewrite
\begin{equation}\label{bd-deco}
{T_j}(b_{\mathrm{d}})=T_j(b_{\mathrm{d}})(\textbf{1}_{\M}-\zeta)+(\textbf{1}_{\M}-\zeta)T_j(b_{\mathrm{d}})\zeta+\zeta T_j(b_{\mathrm{d}})\zeta,
\end{equation}
where $\zeta$ is defined in \eqref{def-zeta}. Recall that $K$ is real and
$$b_{\mathrm{d}}=\sum_{n\geq1}b_{\mathrm{d},n} =\sum_{n\geq1}\sum_{Q\in D_n}b_{\mathrm{d},n,Q}.$$
It is easy to see that $\zeta T_j(b_{\mathrm{d}})\zeta$ is self-adjoint. Denote by
$$
G:=\sum_{n\geq 1}\sum_{i\leq n-1}\sum_{Q\in D_n}\left|\int_{Q}K_{i,Q}(x,y)b_{\mathrm{d},n,Q}(y) d y\right|,
$$
where $K_{i,Q}$ is given in Lemma \ref{keylemma2}. The following estimate is contained in \cite[Section 4.2]{HLX2020}
\begin{equation}\label{claim-1}
-\zeta G \zeta \leq\zeta T_jb_{\mathrm{d}}\zeta \leq \zeta G \zeta.
\end{equation}
We claim that
\begin{equation}\label{claim-2}
{\|G\|}_{L_1^w(\M)}\lesssim {\|f\|}_{L_1^w(\M)}.
\end{equation}

Before verifying the claim, let us finish the estimate for $b_d$. Set $$e_{b_{\mathrm{d}}}=\chi_{(0,\lambda]}(\zeta G\zeta)\wedge\zeta.$$
Then, by \eqref{bd-deco} and \eqref{claim-1}, we have
$$\sup_{j\in\mathbb{Z}}\left\|e_{b_{\mathrm{d}}} \left(T_j b_{\mathrm{d}}\right) e_{b_{\mathrm{d}}} \right\|_{\infty}=\sup_{j\in\mathbb{Z}} \left\|e_{b_{\mathrm{d}}} \zeta\left(T_j b_{\mathrm{d}}\right)\zeta e_{b_{\mathrm{d}}} \right\|_{\infty} \leq \lambda.$$
On the other hand, from Lemma \ref{Cuculescu}(iv), the Chebychev inequality and \eqref{claim-2}, it follows that
\begin{align*}
 \lambda \varphi^w(\textbf{1}_{\M}-e_{b_{\mathrm{d}}})& \leq \lambda \varphi^w(\textbf{1}_{\M}-\zeta)+\lambda \varphi^w(\chi_{(\lambda,\infty)}(\zeta G\zeta))\\
 & \lesssim [w]_{A_1}  {\|f\|}_{L_1^w(\M)}+ {\|\zeta G\zeta\|}_{L_1^w(\M)}\\
 & \leq [w]_{A_1}  {\|f\|}_{L_1^w(\M)}+ {\| G\|}_{L_1^w(\M)}\lesssim {\|f\|}_{L_1^w(\M)}.
\end{align*}
This proves \eqref{aimeq} for $h=b_{\mathrm{d}}$.

It remains to verify the claim \eqref{claim-2}. For the sake of convenience, for any fixed $n\geq 1$, $i\leq n-1$ and $Q\in D_n$, denote by
 $$
 G_Q(x):=\int_{Q}\left|K_{i,Q}(x,y)\right| b_{\mathrm{d},n,Q}(y)d y.
 $$
 We observe that there exists $Q_i$ satisfying $ Q_i\supset Q$ and $ |Q_i|\approx 2^{-id}$ such that
 \begin{equation}\label{GQ}
 G_Q=G_Q \chi_{Q_i}.
\end{equation}
 In fact, fixing $x\in \R^d$ satisfying $G_Q(x)\neq 0$, there exists $y_0\in Q$ such that $K_{i,n}(x,y_0)\neq 0$. This implies
 $$
 2^{-i}\sqrt{d}\leq |x-y_0|\leq 2^{-i+1}\sqrt{d} \quad \mbox{or} \quad  2^{-i}\sqrt{d}\leq |x-c_{Q}|\leq 2^{-i+1}\sqrt{d}.
 $$
Suppose the first case holds; namely,
 $$
 2^{-i}\sqrt{d}\leq |x-y_0|\leq 2^{-i+1}\sqrt{d}. $$
On one hand,
 $$
 \mathrm{dist}(x,Q)\leq  |x-y_0|\leq 2^{-i+1}\sqrt{d}.
 $$
On the other, it is obvious that
 $$|x-y_0|\leq \mathrm{dist}(x,Q)+ \sqrt{d}\ell(Q).$$
Then, since $i\leq n-1$, it follows that
$$\mathrm{dist}(x,Q)\geq |x-y_0| -\sqrt{d}\ell(Q)\geq 2^{-i}\sqrt{d}- 2^{-i-1}\sqrt{d}=2^{-i-1}\sqrt{d}.$$
Thus, we have $$\mathrm{dist}(x,Q)\approx 2^{-i}$$
with the constants depends only on $d$. The case $2^{-i}\sqrt{d}\leq |x-c_{Q}|\leq 2^{-i+1}\sqrt{d}$ can be proved similarly. Thus, the support set of $G_Q$ is contained in the set $\{x\in \R^d: \mathrm{dist}(x,Q)\approx 2^{-i}\}$. Set
\begin{equation*}\label{Qi}
{Q_i}=Q\cup\{x:{\rm dist}(x,Q)\approx 2^{-i}\}.
\end{equation*}
The observation has been verified.  Thanks to this observation, we conclude that
 \begin{align*}
{\|G\|}_{L_1^w(\M)}&\leq  \sum_{n\geq 1}\sum_{Q\in D_n}\sum_{i\leq n-1}\nu\int_{\R^d} G_Q(x)w(x)dx \\
&=   \sum_{n\geq 1}\sum_{Q\in D_n}\sum_{i\leq n-1}\nu\int_{Q_i} G_Q(x)w(x)dx\\
&= \sum_{n\geq 1}\sum_{Q\in D_n}\sum_{i\leq n-1}\int_{Q_i}\int_{Q}\left|K_{i,Q}(x,y)\right|{\|b_{\mathrm{d},n,Q}(y)\|}_{L_1(\N)} d yw(x)dx\\
&=\sum_{n\geq 1}\sum_{Q\in D_n}\sum_{i\leq n-1}
\int_{Q_i}{\|b_{\mathrm{d},n,Q}(y)\|}_{L_1(\N)}\int_{Q}\left|K_{i,Q}(x,y)\right| w(x)dxdy.
\end{align*}
Consequently, using Lemma \ref{keylemma2} for $p=1$, we get
 \begin{align*}
{\|G\|}_{L_1^w(\M)}&\lesssim\sum_{n\geq 1}\sum_{Q\in D_n}{\|b_{\mathrm{d},n,Q}(y)\|}_{L_1^w(\M)}\\
&\leq \sum_{n\geq 1}{\|b_{\mathrm{d},n}(y)\|}_{L_1^w(\M)} \lesssim {\|f\|}_{L_1^w(\M)},
\end{align*}
where the last inequality is due to Lemma \ref{NewCZ}(ii). This verifies \eqref{claim-2}; and so the proof of the estimate for the diagonal part of the bad function is complete.

\medskip

{\bf Estimate of \eqref{aimeq}: the case of \bm{$h=b_\mathrm{off}$}.}  Set $b_{\mathrm{off},n,Q}=p_Qfq_Q$. Then
$$b_\mathrm{off}=\sum_{n\geq1}\sum_{Q\in D_n} b_{\mathrm{off},n,Q}+b_{\mathrm{off},n,Q}^*.$$
Denote by
$$
H:=\sum_{n\geq 1}\sum_{i\leq n-1}\sum_{Q\in D_n}\left|\int_{Q}K_{i,Q}(x,y)\big(b_{\mathrm{off},n,Q}(y)+b_{\mathrm{off},n,Q}^*(y)\big)  d y\right|.
$$
Similar as above, we have
\begin{equation}\label{claim-3}
-\zeta H \zeta \leq\zeta T_jb_{\mathrm{off}}\zeta \leq \zeta H\zeta, 
\end{equation}
and we claim that
\begin{equation}\label{claim-4}
{\|H\|}_{L_1^w(\M)}\lesssim {\|f\|}_{L_1^w(\M)}.
\end{equation}
Once the claim is verified, taking $$e_{b_{\mathrm{off}}}=\chi_{(0,\lambda]}(\zeta H\zeta)\wedge\zeta,$$
we get \eqref{aimeq} for $h=b_{\mathrm{off}}$. 
Therefore, it remains to verify the claim. Note that
\begin{equation*}
  {\|H\|}_{L_1^w(\M)}\lesssim \sum_{n\geq 1}\sum_{i\leq n-1}\sum_{Q\in D_n}
  \int_{\R^d}{\left\|\int_{Q}K_{i,Q}(x,y)p_Qf(y)q_Q d y\right\|}_{L_1(\N)}w(x)dx.
   \end{equation*}
Using the H\"{o}lder inequality from \cite[Proposition 1.1]{Me2007}, we deduce that
\begin{align*}
&{\left\|\int_{Q}K_{i,Q}(x,y)p_Qf(y)q_Q d y\right\|}_{L_1(\N)}\\
&\leq
	\Big\|\Big(\int_Q |K_{i,Q}(x,y)|^2p_Qf(y)p_Q dy\Big)^{\frac 1 2}\Big\|_{L_1(\N)} \cdot	\Big\|\Big(\int_Q q_Qf(y)q_Q dy\Big)^{\frac 1 2}\Big\|_{L_{\infty}(\N)}\\
	&\leq \Big\|\Big(\int_Q |K_{i,Q}(x,y)|^2p_Qf(y)p_Q dy\Big)^{\frac 1 2}\Big\|_{L_2(\N)}[\nu(p_Q)]^{\frac12} (\lambda|Q|)^{\frac12}.
\end{align*}
Therefore,
\begin{equation*}
  {\|H\|}_{L_1^w(\M)}\lesssim \sum_{n\geq 1}\sum_{i\leq n-1}\sum_{Q\in D_n}(\nu(p_Q)\lambda|Q|)^{\frac12}\int_{\R^d}\Big\|\Big(\int_Q |K_{i,Q}(x,y)|^2p_Qf(y)p_Q dy\Big)^{\frac 1 2}\Big\|_{L_2(\N)}w(x)dx.
   \end{equation*}
For any fixed $n\geq 1$, $i\leq n-1$ and $Q\in D_n$, denote by
 $$
 H_Q(x):=\int_{Q}\left|K_{i,Q}(x,y)\right|^2 p_Q f(y) p_Q d y.
 $$
Similar to \eqref{GQ}, we can show that there exists $Q_i$ satisfying $ Q_i\supset Q$, $ |Q_i|\approx 2^{-id}$ such that
 $$
 H_Q=H_Q \chi_{Q_i}.
 $$
Based on this observation, one can proceed with the estimate of $H$ as follows:
\begin{align}\label{key-h}
  &{\|H\|}_{L_1^w(\M)}\nonumber\\
  &\lesssim \sum_{n\geq 1}\sum_{i\leq n-1}\sum_{Q\in D_n}(\nu(p_Q)\lambda|Q|)^{\frac12}\int_{Q_i}\Big\|\Big(\int_Q |K_{i,Q}(x,y)|^2p_Qf(y)p_Q dy\Big)^{\frac 1 2}\Big\|_{L_2(\N)}w(x)dx\\
  &= \sum_{n\geq 1}\sum_{Q\in D_n}\sum_{i\leq n-1} (\nu(p_Q)\lambda|Q|)^{\frac12}\int_{Q_i}\Big\|\Big(\int_Q |K_{i,Q}(x,y)|^2p_Qf(y)p_Q dy\Big)^{\frac 1 2}\Big\|_{L_2(\N)}w(x)dx\nonumber.
   \end{align}
According to Lemma \ref{weightprop}(ii), for any $ Q_i\supset Q$, we have
\begin{equation}\label{hhh}
|Q|^{1/2}\leq C\frac{w(Q)^{1/2}|Q_i|^{1/2}}{w(Q_i)^{1/2}},
\end{equation}
where $C$ is a positive constant which is independent of $i$. Thus, combining \eqref{hhh} with the Cauchy-Schwarz inequality, it follows that
\begin{align*}
&|Q|^{1/2}\sum_{i\leq n-1}\int_{Q_i}{\left\|\Big(\int_{Q}|K_{i,Q}(x,y)|^2 p_Qf(y)p_Q d y\Big)^{\frac12}\right\|}_{L_2(\N)}w(x)dx \\
&\lesssim |Q|^{1/2}\sum_{i\leq n-1}\Big(\int_{Q_i}{\left\|\Big(\int_{Q}|K_{i,Q}(x,y)|^2 p_Qf(y)p_Q d y\Big)^{\frac12}\right\|}_{L_2(\N)}^2w(x)dx\Big)^{1/2}\Big(\int_{Q_i}w(x)dx\Big)^{1/2}\\
&\lesssim w(Q)^{\frac12}\sum_{i\leq n-1}|Q_i|^{1/2} \Big(\int_{Q}{\big\|p_Qf(y)p_Q \big\|}_{L_1(\N)}\int_{Q_i}|K_{i,Q}(x,y)|^2w(x)dxdy\Big)^{1/2}.
\end{align*}
Plugging the above estimate into \eqref{key-h} and using Lemma \ref{keylemma2} with $p=2$, we conclude that
$$\|H\|_{L_1^w(\M)}\lesssim \sum_{n\geq 1}\sum_{Q\in D_n}(\lambda \nu(p_Q)w(Q))^{\frac12}\|\chi_Qp_nfp_n\|_{L_1^w(\M)}^{\frac 1 2}.$$
Using the fact $\varphi^w(\chi_Q p_n)=\nu(p_Q)w(Q)$ and the Cauchy-Schwartz inequality, we obtain
\begin{equation}\label{last-part}
\begin{aligned}
  {\|H\|}_{L_1^w(\M)}&\lesssim  \sum_{n\geq 1}\sum_{Q\in D_n} \left(\lambda \varphi^w(\chi_Q p_n)\right)^{\frac 1 2}\varphi^w(\chi_Qp_nf)^{\frac 1 2}\\
&\leq \Big(\sum_{n\geq1}\sum_{Q\in D_n}
\lambda\varphi^w(\chi_Qp_n)\Big)^{\frac 1 2}\Big(\sum_{n\geq1}\sum_{Q\in D_n}
\varphi^w(\chi_Qp_nf)\Big)^{\frac 1 2}
\\
&=\big(\lambda\varphi^w({\bf 1}_{\M}-q)\big)^{\frac 1 2}\varphi^w\big(({\bf 1}_{\M}-q)f\big)^{\frac 1 2}\lesssim {\|f\|}_{L_1^w (\M)},
\end{aligned}
\end{equation}
where the last inequality is due to Lemma \ref{Cuculescu}(iv). This completes the proof.
\end{proof}


\section{Weighted estimate for square functions}\label{sec5}
In this section, we provide the proof of Theorem \ref{main-square}. To this end, we first establish the following lemma.

\begin{lemma}\label{keylemma-square}
Let $1\leq p<\infty$ and let a weight $w\in A_1$. Suppose $K=(K_k)_{k\geq1}$ satisfies the size condition and \eqref{Lr-Hc-square} with $r=p r_w'$ $($$r_w'$ is given in Theorem \ref{main-maximal}$)$. Then, for any cube $Q$, we have
\begin{align*}
\sum_{k\geq 1}\left|5^{k+1} Q\right|\left(\frac{1}{\left|5^{k+1}Q\right|} \int_Q\|f(y)\|_{L_1(\N)}\int_{5^{k+1}Q\setminus5^k Q}{\|\vec{K}_Q(x,y)\|}_{\ell_2}^p w(x) d x d y\right)^{\frac{1}{p}}
 \lesssim\left\|\chi_Q f\right\|_{L_1^w(\M)}^{\frac{1}{p}}.
\end{align*}
\end{lemma}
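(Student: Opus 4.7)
The plan is to follow the template of Lemma \ref{keylemma2}, with two adjustments: the kernel is vector-valued, which only replaces $|\cdot|$ by $\|\cdot\|_{\ell_2}$ in the key computation, and the annuli $5^{k+1}Q\setminus 5^kQ$ in the statement are $5$-adic rather than dyadic, which will force a short covering argument at the end. The three ingredients to combine are H\"older's inequality in $x$, the reverse H\"older property from Lemma \ref{weightprop}(i) together with the $A_1$ condition, and the $L_{pr_w'}$-H\"ormander hypothesis \eqref{Lr-Hc-square}.

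Concretely, I would first apply H\"older's inequality in $x$ with conjugate exponents $r_w'$ and $r_w$ on each annulus, giving
\[
\int_{5^{k+1}Q\setminus 5^kQ}\|\vec{K}_Q(x,y)\|_{\ell_2}^p w(x)\,dx \leq \Bigl(\int_{5^{k+1}Q\setminus 5^kQ}\|\vec{K}_Q(x,y)\|_{\ell_2}^{pr_w'}dx\Bigr)^{1/r_w'}\Bigl(\int_{5^{k+1}Q}w(x)^{r_w}dx\Bigr)^{1/r_w}.
\]
By Lemma \ref{weightprop}(i), $w\in \mathrm{RH}_{r_w}$, so \eqref{RH-condition} converts the last factor to a constant multiple of $|5^{k+1}Q|^{1/r_w'}\cdot\frac{1}{|5^{k+1}Q|}\int_{5^{k+1}Q}w$, and the defining $A_1$ inequality further replaces this average by a constant times $w(y)$ for any $y\in Q\subset 5^{k+1}Q$. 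Combining these yields, for every $y\in Q$,
\[
\int_{5^{k+1}Q\setminus 5^kQ}\|\vec{K}_Q(x,y)\|_{\ell_2}^p w(x)\,dx \lesssim |5^{k+1}Q|\Bigl(\frac{1}{|5^{k+1}Q|}\int_{5^{k+1}Q\setminus 5^kQ}\|\vec{K}_Q(x,y)\|_{\ell_2}^{pr_w'}dx\Bigr)^{1/r_w'}w(y).
\]

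Plugging this into the left-hand side of the lemma and pulling out the supremum over $y\in Q$ of the kernel factor separates the $y$-integration from the $k$-summation, so after taking the $1/p$-th power the remaining $y$-integral produces $\|\chi_Qf\|_{L_1^w(\M)}^{1/p}$. The claim is then reduced to showing
\[
\sum_{k\geq 1}\sup_{y\in Q}|5^{k+1}Q|\Bigl(\frac{1}{|5^{k+1}Q|}\int_{5^{k+1}Q\setminus 5^kQ}\|\vec{K}_Q(x,y)\|_{\ell_2}^{pr_w'}dx\Bigr)^{1/(pr_w')} \lesssim 1
\]
uniformly in $Q$.

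The main obstacle, and the only genuinely new point beyond Lemma \ref{keylemma2}, is to derive this uniform bound from \eqref{Lr-Hc-square} with $r=pr_w'$, since that hypothesis is formulated over dyadic annuli $\{2^j\ell(Q)\leq |x-c_Q|\leq 2^{j+1}\ell(Q)\}$ rather than the $5$-adic ones. The resolution is a covering argument: each annulus $5^{k+1}Q\setminus 5^kQ$ is contained in the union of a bounded number (depending only on $d$) of consecutive dyadic annuli with $j\approx k\log_2 5$, and the ranges of $j$ corresponding to distinct $k$ overlap by at most a constant. Applying the triangle inequality in $L^{pr_w'}$ over these dyadic pieces, bounding the prefactor $|5^{k+1}Q|^{1/(pr_w')'}$ by a multiple of $(2^j\ell(Q))^{d/(pr_w')'}$ on each piece, and then reindexing the sum over $k$ into a sum over $j$ produces exactly the dyadic expression in \eqref{Lr-Hc-square}, which is finite by hypothesis. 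This closes the proof.
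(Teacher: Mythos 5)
Your proof is correct and follows essentially the same route as the paper: H\"older's inequality in $x$ with exponents $r_w'$ and $r_w$, the reverse H\"older estimate combined with the $A_1$ condition to replace the weight average over $5^{k+1}Q$ by $w(y)$, and the $L_{pr_w'}$-H\"ormander hypothesis to control the remaining sum over $k$. The explicit covering of each annulus $5^{k+1}Q\setminus 5^kQ$ by boundedly many dyadic annuli with bounded overlap is a detail the paper leaves implicit and you handle it correctly; the only slip is the prose factor $|5^{k+1}Q|^{1/r_w'}$, which should be $|5^{k+1}Q|^{1/r_w}$, though your displayed inequality (the one actually used afterwards) is the right one.
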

\begin{proof}
For simplicity, we set $Q_k=5^{k+1}Q\setminus5^k Q$. Using
the H\"{o}lder inequality and Lemma \ref{weightprop}(i), we get, for any $y \in Q$,
 \begin{align*}
\int_{Q_k}&{\|\vec{K}_Q(x,y)\|}_{\ell_2}^p w(x) d x\\
&\leqslant\left|5^{k+1} Q\right|\left(\frac{1}{|5^{k+1}Q |} \int_{Q_k}{\|\vec{K}_Q(x,y)\|}_{\ell_2}^{p r_w'}\right)^{\frac{1}{r_w'}}\left(\frac{1}{|5^{k+1} Q|} \int_{Q^k} w(x)^{r_w} d x\right)^{\frac{1}{r_w}}\\
&\lesssim\left|5^{k+1} Q\right|\left(\frac{1}{|5^{k+1} Q|} \int_{Q_k}{\|\vec{K}_Q(x,y)\|}_{\ell_2}^{p r_w^{\prime}}\right)^{\frac{1}{r_w'}}\left(\frac{1}{|5^{k+1} Q|} \int_{5^{k+1} Q} w(x) d x\right)\\
&\lesssim\left|5^{k+1} Q\right|\left(\frac{1}{\left|5^{k+1} Q\right|} \int_{Q_k}{\|\vec{K}_Q(x,y)\|}_{\ell_2}^{p r_w'}\right)^{\frac{1}{r_w'}} w(y),
\end{align*}
where the last inequality is due to $w\in A_1$. We denote by ${\rm (LHS)}$ the left-hand side of the desired inequality. Since $(K_k)_{k\geq 1}$ satisfies \eqref{Lr-Hc-square} with $r=p r_w'$, we have
\begin{align*}
{\rm (LHS)} &\lesssim \sum_{k\geq 1}\left|5^{k+1} Q\right|\left(\int_Q\|f(y)\|_{L_1(\N)}\left(\frac{1}{\left|5^{k+1} Q\right|} \int_{Q_k}{\|\vec{K}_Q(x,y)\|}_{\ell_2}^{p r_w'}\right)^{\frac{1}{r_w'}} w(y)dxdy\right)^{\frac 1 p}\\
&\leq\sum_{k\geq 1}\sup \limits_{y \in Q}\left|5^{k+1} Q\right|\left(\frac{1}{\left|5^{k+1} Q\right|} \int_{Q_k}{\|\vec{K}_Q(x,y)\|}_{\ell_2}^{p r_w'}dx\right)^{\frac{1}{pr_w'}}\left(\int_Q\|f(y)\|_{L_1(\N)}w(y)dy\right)^{\frac 1 p}\\
&\lesssim \left(\int_Q\|f(y)\|_{L_1(\N)}w(y)dy\right)^{\frac 1 p}=\left\|\chi_Q f\right\|_{L_1^w(\M)}^{\frac{1}{p}},
\end{align*}
which completes the proof.
\end{proof}

Now, we are in a position to prove Theorem \ref{main-square}.

 \begin{proof}[Proof of Theorem \ref{main-square}]
 Without loss of generality, we may assume that $f\geq0$. 	
 Set
 $$\widetilde{T}f = \sum_{k\geq1}\varepsilon_kT_k(f),$$
 where $(\varepsilon_k)_{k\geq 1}$ is a sequence of Rademacher functions defined on $(0,1)$.
 By \eqref{khin1}, it is enough to prove
 $$\|\widetilde{T}f\|_{L^w_{1,\infty}(\mathcal A)}=\Big\|\sum_{k\geq1}\varepsilon_kT_k(f)\Big\|_{L_{1,\infty}(\A^w)}\lesssim \|f\|_{L^w_1(\M)},$$
 	where $\A^w=L_{\infty}(0,1)\overline{\otimes}\M$ equipped with the trace $\widetilde{\varphi^w}=\int_0^1\otimes \varphi^w$.
 	To this end, for fixed $\lambda>0$, consider the Calder\'{o}n-Zygmund decomposition \eqref{dec} of $f$:
 	$$f=g+b_{\mathrm{d}}+b_{\mathrm{off}}.$$
Obviously,
 \begin{align*}		&\lambda\widetilde{\varphi^w}(\chi_{(\lambda,\infty)}(|\widetilde{T}(f)|))\\
 &\quad \leq\lambda\widetilde{\varphi^w}(\chi_{(\lambda/3,\infty)}(|\widetilde{T}(g)|))+\lambda\widetilde{\varphi^w}(\chi_{(\lambda/3,\infty)}(|\widetilde{T}(b_{\mathrm{d}})|))+\lambda\widetilde{\varphi^w}(\chi_{(\lambda/3,\infty)}(|\widetilde{T}(b_{\mathrm{off}})|))\\
 		&\quad  =:\mathrm{I}+\mathrm{II}+\mathrm{III}.
 	\end{align*}
It is sufficient for us to estimate $\mathrm{I}$, $\mathrm{II}$ and $\mathrm{III}$ separately.

 The estimate of $\mathrm{I}$ is easy. Indeed, by the Chebyshev inequality and the assumption that $T$ is bounded from $L_2^w(\M)$ to $L_2^w(\M;\ell_2^{cr})$, we have
 \begin{align*}
 	\mathrm{I}&\lesssim \frac{1}{\lambda} \|\widetilde{T}(g)\|_{L_2^w(\A)}^2=\frac{1}{\lambda} \|T(g)\|_{L_2^w(\M;\ell_2^{cr})}^2\lesssim \frac{1}{\lambda} \|g\|_{L_2^w(\M)}^2\lesssim \frac{1}{\lambda}\lambda \|f\|_{L_1^w(\M)}=\|f\|_{L_1^w(\M)},
 \end{align*}
where the last inequality is due to Lemma \ref{NewCZ}(i).

We now turn to estimate $\mathrm{II}$. We rewrite 
$$\widetilde{T}(b_{\mathrm{d}})=\sum_{k\geq1}\varepsilon_kT_k(b_{\mathrm{d}})({\bf 1_\M}-\zeta)+\sum_{k\geq1}\varepsilon_k({\bf 1_\M}-\zeta)T_k(b_{\mathrm{d}})\zeta+\sum_{k\geq1}\varepsilon_k\zeta T_k(b_{\mathrm{d}})\zeta.$$
According to Lemma \ref{lem1}(i), we have
 \begin{align*}
 	\mathrm{II}&\leq 2 \lambda \widetilde{\varphi^w} (1_{(0,1)}\otimes ({\bf 1_\M}-\zeta)) +\lambda \widetilde{\varphi^w}\Big(\chi_{(\lambda/3,\infty)}\Big(\Big|\sum_{k\geq1}\varepsilon_k\zeta T_k(b_{\mathrm{d}})\zeta\Big|\Big)\Big)\\
 	&\lesssim \|f\|_{L_1^w(\M)}+\lambda \widetilde{\varphi^w}\Big(\chi_{(\lambda/3,\infty)}\Big(\Big|\sum_{k\geq1}\varepsilon_k\zeta T_k(b_{\mathrm{d}})\zeta\Big|\Big)\Big)\\
 	&=: \|f\|_{L_1^w(\M)}+\mathrm{II}_{\zeta}.
 \end{align*}
Recall that
$$b_{\mathrm{d}}=\sum_{n\geq1}\sum_{Q\in D_n}p_{Q}(f-f_Q)p_Q\chi_Q=:\sum_{n\geq1}\sum_{Q\in D_n}b_{\mathrm{d},n,Q}.$$
By Lemma \ref{lem1}, we have $p_Q\zeta(x)=\zeta(x)p_Q=0$ provided $x\in 5Q$.
Hence, 
$$\zeta T_k(b_{\mathrm{d},n,Q})\zeta=\zeta T_k(b_{\mathrm{d},n,Q})\zeta\chi_{(5Q)^c}.$$
Then the Chebyshev inequality and triangle inequality give us
\begin{align*}
	\mathrm{II}_{\zeta}&\leq 3 \Big\|\sum_{k\geq1}\varepsilon_k\zeta T_k(b_{\mathrm{d}})\zeta\Big\|_{L_1^w(L_{\infty}(0,1)\overline{\otimes}\N)}\leq 3\sum_{n\geq1}\sum_{Q\in D_n} \Big\|\sum_{k\geq1}\varepsilon_k\zeta T_k(b_{\mathrm{d},n,Q})\zeta\Big\|_{L_1^w(L_{\infty}(0,1)\overline{\otimes}\N)}\\
	&=3\sum_{n\geq1}\sum_{Q\in D_n} \int_{(5Q)^c}  \Big\|\sum_{k\geq1}\varepsilon_k\zeta(x) T_k(b_{\mathrm{d},n,Q})(x)\zeta(x)\Big\|_{L_1(L_{\infty}(0,1)\overline{\otimes}\N)}w(x)dx\\
	&\leq 3\sum_{n\geq1}\sum_{Q\in D_n} \int_{(5Q)^c}  \Big\|\sum_{k\geq1}\varepsilon_k T_k(b_{\mathrm{d},n,Q})(x)\Big\|_{L_1(L_{\infty}(0,1)\overline{\otimes}\N)}w(x)dx\\
&=: 3\sum_{n\geq1}\sum_{Q\in D_n} H_Q.
\end{align*}
From Lemma \ref{NewCZ}(iii), we have
$$T_k(b_{\mathrm{d},n,Q})(x)=\int_{Q} b_{\mathrm{d},n,Q}(y) \big(K_k(x,y)-K_k(x,c_Q)\big)dy=:\int_{Q} b_{\mathrm{d},n,Q}(y) K_{k,Q}(x,y)dy, $$
where $c_Q$ is the centre of $Q$. Furthermore,
\begin{align*}
	&\Big\|\sum_{k\geq1}\varepsilon_k  T_k(b_{\mathrm{d},n,Q})(x) \Big\|_{L_1(L_{\infty}(0,1)\overline{\otimes}\N)}\\
	&\leq \int_{Q} \Big\|b_{\mathrm{d},n,Q}(y)\sum_{k\geq1}\varepsilon_k  K_{k,Q}(x,y) \Big\|_{L_1(L_{\infty}(0,1)\overline{\otimes}\N)} dy\\
	&=\int_{Q} \|b_{\mathrm{d},n,Q}(y)\|_{L_1(\N)}\Big\|\sum_{k\geq1}\varepsilon_k  K_{k,Q}(x,y) \Big\|_{L_1(L_{\infty}(0,1))} dy\\
	&\lesssim \int_{Q} \|b_{\mathrm{d},n,Q}(y)\|_{L_1(\N)}\Big(\sum_{k\geq1}  |K_{k,Q}(x,y)|^{2}\Big)^{1/2} dy,
\end{align*}
where the last inequality is due to the classical Khintchine inequality. Applying Lemma \ref{keylemma-square} with $p=1$, we obtain

\begin{align*}
H_Q&\leq \sum_{j\geq 1}\int_{5^{j+1}Q\setminus5^j Q}\int_{Q} \|b_{\mathrm{d},n,Q}(y)\|_{L_1(\N)}\Big(\sum_{k\geq1}  |K_{k,Q}(x,y)|^{2}\Big)^{1/2} dyw(x)dx\\
&\lesssim\|b_{\mathrm{d},n,Q}\|_{L_1^w(\M)}.
\end{align*}
We conclude from the above argument and Lemma \ref{NewCZ}(ii) that
$$\mathrm{II}_{\zeta}\lesssim \sum_{n\geq1}\sum_{Q\in D_n}\|b_{\mathrm{d},n,Q}\|_{L_1^w(\M)} \lesssim \|f\|_{L_1^w(\M)}.$$

Finally, we estimate $\mathrm{III}$. Let $b_{\mathrm{off},n,Q}=p_Qfq_Q$. Then
$$b_\mathrm{off}=\sum_{n\geq1}\sum_{Q\in D_n} b_{\mathrm{off},n,Q}+b_{\mathrm{off},n,Q}^*.$$
By similar argument used as in the estimate of $\mathrm{II}$, we obtain
\begin{align*}
	\mathrm{III}\lesssim 2\|f\|_{L_1(\M)}+ \mathrm{III}_{\zeta},
\end{align*}
where $\mathrm{III}_{\zeta}$ is estimated as follows:
\begin{equation}\label{ineq1}
\begin{split}
	\mathrm{III}_{\zeta}&= \lambda \widetilde{\varphi}\Big(\chi_{(\lambda/3,\infty)}\Big(\Big|\sum_{k\geq1}\varepsilon_k\zeta T_k(b_{\mathrm{off}})\zeta\Big|\Big)\Big)
	\leq 3 \Big\|\sum_{k\geq1}\varepsilon_k\zeta T_k(b_{\mathrm{off}})\zeta\Big\|_{L_1^w(L_{\infty}(0,1)\overline{\otimes}\N)}\\
	&\leq 3\sum_{n\geq1}\sum_{Q\in D_n} \Big\|\sum_{k\geq1}\varepsilon_k\zeta T_k(b_{b_{\mathrm{off},n,Q}+b_{\mathrm{off},n,Q}^*})\zeta\Big\|_{L_1^w(L_{\infty}(0,1)\overline{\otimes}\N)}\\
	&\leq 6\sum_{n\geq1}\sum_{Q\in D_n} \Big\|\sum_{k\geq1}\varepsilon_k\zeta T_k(b_{b_{\mathrm{off},n,Q}})\zeta\Big\|_{L_1^w(L_{\infty}(0,1)\overline{\otimes}\N)}.
\end{split}
\end{equation}
For each fixed $n$ and $Q\in D_n$, we have from Lemma \ref{NewCZ}(iii) that
\begin{equation} \label{ineq2}
  \begin{split}
  &\Big\|\sum_{k\geq1}\varepsilon_k\zeta T_k(b_{{\mathrm{off},n,Q}})\zeta\Big\|_{L_1^w(L_{\infty}(0,1)\overline{\otimes}\N)}    \\
  &=   \int_{(5Q)^c} \Big\|\sum_{k\geq1}\varepsilon_k T_k(b_{\mathrm{off},n,Q})(x)\Big\|_{L_1(L_{\infty}(0,1)\overline{\otimes}\N)}w(x)dx\\
	&= \int_{(5Q)^c} \Big\|\int_Q p_Qf(y)q_Q \sum_{k\geq1}\varepsilon_k K_{k,Q}(x,y)dy\Big\|_{L_1(L_{\infty}(0,1)\overline{\otimes}\N)}w(x)dx.
  \end{split}
  \end{equation}
By the H\"{o}lder inequality stated in \cite[Proposition 1.1]{Me2007}, we have
\begin{align*}
&	\Big\|\int_Q p_Qf(y)q_Q \sum_{k\geq1}\varepsilon_k K_{k,Q}(x,y)dy\Big\|_{L_1(L_{\infty}(0,1)\overline{\otimes}\N)}\\
&\leq
	\Big\|\Big(\int_Q p_Qf(y)p_Q K_{\varepsilon,Q}(x,y)^2dy\Big)^{\frac 1 2}\Big\|_{L_1(L_{\infty}(0,1)\overline{\otimes}\N)} \cdot	\Big\|\Big(\int_Q q_Qf(y)q_Q dy\Big)^{\frac 1 2}\Big\|_{L_{\infty}(L_{\infty}(0,1)\overline{\otimes}\N)}\\
	&\leq \Big\|\Big(\int_Q p_Qf(y)p_Q K_{\varepsilon,Q}(x,y)^2dy\Big)^{\frac 1 2} \Big\|_{L_2(L_{\infty}(0,1)\overline{\otimes}\N)}[\nu(p_Q)]^{1/2} (\lambda|Q|)^{1/2}\\
&=\Big\|\int_Q p_Qf(y)p_Q K_{\varepsilon,Q}(x,y)^2dy \Big\|_{L_1(L_{\infty}(0,1)\overline{\otimes}\N)}^{\frac 1 2} [\nu(p_Q)]^{\frac 1 2} (\lambda|Q|)^{\frac 1 2},
\end{align*}
where $K_{\varepsilon,Q}(x,y)=\sum_{k\geq1}\varepsilon_k K_{k,Q}(x,y)$.  The classical Khintchine inequality gives that
\begin{align*}
&\Big\|\int_Q p_Qf(y)p_Q K_{\varepsilon,Q}(x,y)^2dy\Big\|_{L_1(L_{\infty}(0,1)\overline{\otimes}\N)}\\&\leq \int_Q\Big\| p_Qf(y)p_Q K_{\varepsilon,Q}(x,y)^2\Big\|_{L_1(L_{\infty}(0,1)\overline{\otimes}\N)}dy\\
&\lesssim\int_Q {\|p_Qf(y)p_Q\|}_{L_1(\N)}\Big(\sum_{k\geq1}  |K_{k,Q}(x,y)|^{2}\Big)dy,
\end{align*}
where the first inequality is due to \cite[Proposition 1.2.2]{HvanNVW2016}. Plugging the above estimate into \eqref{ineq2}, we deduce that
\begin{align*}
 &\Big\|\sum_{k\geq1}\varepsilon_k\zeta T_k(b_{{\mathrm{off},n,Q}})\zeta\Big\|_{{L_1^w({L_{\infty}(0,1)\overline{\otimes}\N)}}}\\
 &\leq  \int_{(5Q)^c} (\lambda|Q|)^{\frac 1 2} [\nu(p_Q)]^{\frac 1 2}\left(\int_Q {\|p_Qf(y)p_Q\|}_{L_1(\N)}\Big(\sum_{k\geq1}  |K_{k,Q}(x,y)|^{2}\Big)dy\right)^{\frac 1 2}w(x)dx\\
 &=(\lambda\nu(p_Q))^{\frac 1 2}\sum_{j\geq 1}|Q|^{1/2}\int  _{Q_j}\left(\int_Q{\|p_Qf(y)p_Q\|}_{L_1(\N)}\Big(\sum_{k\geq1}  |K_{k,Q}(x,y)|^{2}\Big)dy\right)^{\frac 1 2}{w(x)}dx,
\end{align*}
where $Q_j=5^{j+1}Q\setminus5^j Q$. By Lemma \ref{weightprop}(ii), for any $j\geq 1$, we have
$$
|Q|\lesssim \big|5^{j+1}Q\big|\frac{w(Q)}{w(5^{j+1}Q)}.
$$
Then, using the Cauchy-Schwarz inequality, we arrive at
\begin{align*}
&\Big\|\sum_{k\geq1}\varepsilon_k\zeta T_k(b_{b_{\mathrm{off},n,Q}})\zeta\Big\|_{L_1^w(L_{\infty}(0,1)\overline{\otimes}\N)}\\
&\lesssim L_Q\sum_{j\geq 1}\left(\frac{\big|5^{j+1}Q\big|}{w(5^{j+1}Q)}\int  _{Q_j}\int_Q{\|p_Qf(y)p_Q\|}_{L_1(\N)}{\|\vec{K}_Q(x,y)\|}_{\ell_2}^2 dyw(x)dx\right)^{\frac12}
\left(\int  _{Q_j} w(x)dx\right)^{\frac12}\\
&\leq L_Q\sum_{j\geq 1}|5^{j+1}Q|^{\frac12}\left(\int_{Q_j}\int_Q{\|p_Qf(y)p_Q\|}_{L_1(\N)}{\|\vec{K}_Q(x,y)\|}_{\ell_2}^2{w(x)}dydx\right)^{\frac 1 2},
\end{align*}
where $L_	Q=(\lambda \nu(p_Q)w(Q))^{\frac 1 2}$.
Applying Lemma \ref{keylemma-square} with $p=2$, we get
\begin{equation}\label{ineq3}
\Big\|\sum_{k\geq1}\varepsilon_k\zeta T_k(b_{b_{\mathrm{off},n,Q}})\zeta\Big\|_{L_1^w((L_{\infty}(0,1)\overline{\otimes}\N))}\lesssim \left\|\chi_Q p_nfp_n\right\|_{L_1^w(\M)}^{\frac{1}{2}}.
\end{equation}
Note that $\varphi^w(\chi_Q p_n)=\nu(p_Q)w(Q)$. Combining \eqref{ineq1}, \eqref{ineq3} with Lemma \ref{NewCZ} and following the similar proof as \eqref{last-part}, we obtain the desired estimate
$$
\mathrm{III}_{\zeta}\lesssim {\|f\|}_{L_1^w (\M)}.
$$
The proof is finished.
 \end{proof}

\medskip


\section{Weighted estimate in Hardy spaces}\label{sec6}

This section is devoted to proving Theorem \ref{HL1}. To this end, we establish the atomic decomposition theorem for martingale Hardy space $H_{1w}^c(\M)$, which can be regarded as a weighted version of \cite[Corollary 3.12]{CRX2023}. In what follows, we denote by $H_{1w}^{0,c}(\M)$ (resp. $h_{1w}^{0,c}(\M)$), the closure of the set $\{f\in L_1(\M)\cap \M:\E_1(f)=0\}$ in ${H_{1w}^c(\M)}$ (resp. $h_{1w}^{c}(\M)$).

\subsection{Atomic decomposition of weighted Hardy spaces}\label{atomic-hardy}

We firstly introduce the various concepts of noncommutative atoms associated with a weight $w$. Note that if we take $w\equiv 1$, then the following atoms are reduced to the noncommutative atoms introduced in \cite{BCPY2010,MP2011}.

\begin{definition}\label{def-atom}
	An element $a\in L_2^w(\M)$ is said to be a $(1,2)_{c}^w$-atom with respect to the filtration $(\M_k)_{k\geq1}$ and a weight $w$ if there exist $k\geq1$ and a projection $e\in \M_k$ such that
	\begin{enumerate}[{\rm (1)}]
		\item $a=ae$;
		\item $\mathbb{E}_k(a)=0$;
		\item $\|a\|_{L_2^w(\M)}\leq \varphi(ew)^{-1/2}$.
	\end{enumerate}
Row
atoms are defined to satisfy $a=ea$ instead.
\end{definition}

\begin{definition}\label{def-alge-atom}
An element $z\in L_1^w(\M)$ is said to be an algebraic $h_{1w}^c$-atom with respect to the filtration $(\M_k)_{k\geq1}$ and a weight $w$ if there exist a factorization $z=\sum_{k\geq1}a_kb_k$, with $a_k$ and $b_k$ satisfying
	\begin{enumerate}[{\rm (1)}]
			\item $\E_k(a_k)=0$ for all $k\geq 1$, $\sum_{k\geq 1}\|a_k\|_{L_2^w(\M)}^2\leq 1$;
		\item $b_k\in L_2^w(\M_k)$ for all $k\geq 1$, $ \Big\|\big(\sum_{k\geq 1}|b_k|^2\big)^{1/2}\Big\|_{L_2^w(\M)}\leq 1$.	\end{enumerate}
The notion of algebraic $h_{1w}^r$-atoms can be obtained by replacing the factorizations above by $z=\sum_{k\geq1}b_ka_k$.
\end{definition}

Due to the special structure of the dyadic filtration $\{\mathcal M_n\}_{n\geq 1}$ (see \eqref{def-filtration}), we have the following useful properties of weighted $(1,2)_{c}^w$-atoms.

\begin{remark}\label{use-rem} Assume that $a$ is an $(1,2)_{c}^w$-atom associated with $k\geq1$ and $e\in \M_k$. From the above definition, we have the following facts.
	
	\begin{enumerate}[{\rm (i)}]
		\item We have
		\begin{equation}\label{aL1}
			\|a\|_{L_1^w(\M)}=\|aew\|_{L_1{(\M)}}\leq \|a\|_{L_2^w(\M)}\varphi(ew)^{1/2}\leq 1.
		\end{equation}
		\item Since $e\in \M_k$, we can write
		$$e=\sum_{Q\in D_k}p_Q\chi_{Q},$$
		where $p_Q\in \mathcal{P}(\N)$ for each $Q\in D_k$. Moreover, we have
		\begin{equation}\label{a}
		a=\sum_{Q\in D_k} a\chi_{Q} =\sum_{Q\in D_k} ae\chi_{Q}=\sum_{Q\in D_k} ap_Q\chi_{Q}.
		\end{equation}
		\item Consider
	$$\eta =\bigvee_{Q\in D_k} p_Q\chi_{5Q},\quad \eta^\perp=\1-\eta,$$
	where $5Q$ is the interval with the same centre as $Q$ with $|5Q|=5^d|Q|$. Assume $w\in A_1$. Then it follows that
	\begin{equation}\label{eta}
		\varphi(\eta w)\lesssim\varphi(ew).
	\end{equation}
	Indeed, by Lemma \ref{weightprop}(ii), we know that
	\begin{align*}
	\varphi(\eta w)&\leq\sum_{Q\in D_k}\varphi(p_Q\chi_{5Q}w)=\sum_{Q\in D_k}\nu(p_Q)\int_{5Q}w(x)dx\\
	&\lesssim \sum_{Q\in D_k}\nu(p_Q) \int_{Q}w(x)dx=\sum_{Q\in D_k}\varphi(p_Q\chi_Qw)=\varphi(e w).
	\end{align*}
\item According to Definition \ref{def-atom} (2),
$$\E_k(a)=\sum_{Q\in D_k}\frac{1}{|Q|}\int_Qa(y)dy\chi_Q=0,$$
which further implies
\begin{equation}\label{aQ}
	\int_Qa(y)dy=0,\quad \forall Q\in D_k.
\end{equation}
	\end{enumerate}
\end{remark}

We now introduce the atomic Hardy spaces $ H_{1,\rm{at}}^{cw}(\M)$, $H_{1,\rm{aa}}^{cw}(\M)$ associated with weights.
\begin{definition}\label{def-atomic-spaces}
Let $ H_{1,\rm{at}}^{cw}(\M)$ be the space of all operators $f\in H_{1w}^c(\M)$	which can be written as
	\begin{equation}\label{decom}
	f=\sum_k\lambda_k a_k,
	\end{equation}
	where for each $k$, {$a_k$ is either a $(1,2)_{c}^w$-atom or an element of the unit ball of $L_1^w(\M_1)$}, and $(\lambda_k)_k\subset \mathbb{C}$ satisfying $\sum_k|\lambda_k|<\infty$. For $f\in H_{1,\rm{at}}^{cw}(\M)$ we define
	$$
	\|f\|_{H_{1,\rm{at}}^{cw}(\M)}=\inf \sum_{k}|\lambda_k|,
	$$
where the infimum is taken over all the decompositions of $f$ as in \eqref{decom}. Replacing $(1,2)_{c}^w$-atoms with algebraic $h_{1w}^c$-atoms in the above definitions, we obtain the algebraic-atom spaces $H_{1,\rm{aa}}^{cw}(\M)$.
\end{definition}

\begin{remark}\label{indspaces}
These two types of atomic Hardy spaces are indeed identical with equivalent norms. Namely,
\begin{equation}\label{equiv-atomspaces}
 H_{1,\rm{at}}^{cw}(\M)=H_{1,\rm{aa}}^{cw}(\M).
\end{equation}
We include a proof of \eqref{equiv-atomspaces} in Appendix \ref{appendix}.
\end{remark}

Next, we establish the atomic decomposition theorem for weighted Hardy spaces ${H_{1w}^{c}(\M)}$.

\begin{theorem}\label{atomic}
Given a weight $w\in A_2$, we have
	$$
{H_{1w}^{0,c}(\M)}={H_{1,\rm{at}}^{cw}(\M)}
$$
with equivalent norms. Similar result holds for the row version.
	
\end{theorem}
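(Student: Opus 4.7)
The plan is to establish the two continuous inclusions $H_{1,\rm{at}}^{cw}(\M) \hookrightarrow H_{1w}^{0,c}(\M)$ and $H_{1w}^{0,c}(\M) \hookrightarrow H_{1,\rm{at}}^{cw}(\M)$ separately, invoking Remark \ref{indspaces} so that algebraic atoms can be used for the harder inclusion.

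\textbf{Forward direction.} First, I would show that every $(1,2)_c^w$-atom satisfies $\|a\|_{H_{1w}^c(\M)} \lesssim 1$. For an atom $a = ae$ with $e \in \M_k$ and $\E_k(a) = 0$, one has $da_j = 0$ for $j \leq k$ and $da_j = (da_j) e$ for $j > k$ (because $e \in \M_j$), and hence $S_c(a) = e S_c(a) e$. The centrality of $w$ together with a Cauchy--Schwarz estimate in noncommutative $L_2$ then yields
\[
\|S_c(a)\|_{L_1^w(\M)} = \varphi(S_c(a)\,w) \leq \|S_c(a)\|_{L_2^w(\M)}\,\varphi(ew)^{1/2}.
\]
The weighted $L_2$ Burkholder--Gundy inequality (valid under $w \in A_2$) converts $\|S_c(a)\|_{L_2^w}$ into $\|a\|_{L_2^w}$, and the atom normalisation $\|a\|_{L_2^w} \leq \varphi(ew)^{-1/2}$ completes the estimate. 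For atoms of the second type (unit ball of $L_1^w(\M_1)$) the martingale is constant past time $1$, so the bound is immediate.

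\textbf{Reverse direction.} For $f \in H_{1w}^{0,c}(\M)$, I would implement the constructive scheme of \cite{CRX2023,RWZ2020} level-by-level. For each $n \in \Z$, apply a Cuculescu-type construction at threshold $2^n$ to the adapted increasing sequence $(S_{c,k}(f))_{k \geq 1}$ (or, equivalently via the filtration regularity \eqref{regular}, to the predictable $(s_{c,k}(f))_k$) to produce decreasing projections $\{q_k^{(n)}\}_k \subset \M_k$ with $q_k^{(n)} S_{c,k}(f) q_k^{(n)} \leq 2^n q_k^{(n)}$ and $q^{(n)} := \bigwedge_k q_k^{(n)}$. Telescoping the martingale differences of $f$ cut by the layers $q_k^{(n)} - q_k^{(n+1)}$ produces
\[
f = \sum_{n \in \Z} \lambda_n a^{(n)}, \qquad \lambda_n \approx 2^n\,\varphi^w(\1_{\M} - q^{(n)}),
\]
where each $a^{(n)}$ admits a factorisation $\sum_k u_k^{(n)} v_k^{(n)}$ with $u_k^{(n)} \in L_2^w(\M)$ mean-zero at time $k$ and $v_k^{(n)} \in L_2^w(\M_k)$, matching Definition \ref{def-alge-atom}. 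The summability $\sum_n|\lambda_n| \lesssim \|f\|_{H_{1w}^c(\M)}$ follows from the layer-cake identity
\[
\sum_{n \in \Z} 2^n \varphi^w\bigl(q^{(n-1)} - q^{(n)}\bigr) \approx \|S_c(f)\|_{L_1^w(\M)},
\]
and Remark \ref{indspaces} promotes the algebraic atoms into genuine $(1,2)_c^w$-atoms. The row version is obtained by taking adjoints.

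\textbf{Main obstacle.} The central technical difficulty is verifying that the factors $u_k^{(n)}, v_k^{(n)}$ produced by the construction really meet the sharp normalisations of Definition \ref{def-alge-atom} in the weighted trace $\varphi^w$. Because $w$ is not adapted to the dyadic filtration, interactions between $w$ and the noncommuting Cuculescu-type projections cannot be moved around freely as in the unweighted setting; one must substitute pointwise/martingale manipulations with $A_2$-driven estimates, in particular the weighted Doob inequality and the weighted $L_2$ Burkholder--Gundy inequality. A secondary hurdle is the distributional bound $\varphi^w(\1_{\M} - q^{(n)}) \lesssim 2^{-n} \|S_c(f)\|_{L_1^w(\M)}$ for an adapted (not martingale) process; this can be circumvented by running the construction on $s_c(f)$ and passing back to $S_c(f)$ via the regularity \eqref{regular}.
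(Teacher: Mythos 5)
Your forward inclusion is essentially the paper's own argument (one shows $S_c(a)=S_c(a)e$, applies Cauchy--Schwarz against $\varphi(ew)^{1/2}$, and uses the weighted $L_2$ square-function equivalence, which the paper derives in Lemma \ref{L2} from the UMD property of $L_2(\N)$ and the weighted transform bound \eqref{w_transform}), so that half is fine. The genuine gap is in the reverse inclusion. You propose running a Cuculescu-type construction at every dyadic threshold $2^n$ on $(S_{c,k}(f))_k$ (or $(s_{c,k}(f))_k$), cutting the martingale differences by the ``layers'' $q_k^{(n)}-q_k^{(n+1)}$, and summing coefficients $\lambda_n\approx 2^n\varphi^w(\1_\M-q^{(n)})$ via a layer-cake identity $\sum_n 2^n\varphi^w\big(q^{(n-1)}-q^{(n)}\big)\approx\|S_c(f)\|_{L_1^w(\M)}$. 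This is exactly the step that does not survive noncommutativity as stated: Cuculescu projections built at different thresholds are not nested, so $q_k^{(n)}-q_k^{(n+1)}$ and $q^{(n-1)}-q^{(n)}$ need not be projections, nor even positive, and the layers do not resolve the identity. Moreover, per threshold one only has the weak-type estimate $2^n\varphi^w(\1_\M-q^{(n)})\lesssim\|f\|_{H_{1w}^c(\M)}$, which does not sum over $n\in\Z$; a usable layer-cake bound would require dominating $\1_\M-q^{(n)}$ by a spectral projection of $s_c(f)$ at a comparable level, and that comparison is precisely what is missing. You yourself flag this (the ``main obstacle'' and the weighted normalisation of the factors $u_k^{(n)},v_k^{(n)}$) but do not resolve it, so the hard half of the theorem remains unproved in your proposal.

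For comparison, the paper avoids thresholds entirely. After reducing $H_{1w}^c(\M)$ to the conditioned space $h_{1w}^c(\M)$ (Lemma \ref{use-lem}), it exhibits $f$, normalised by $C\|f\|_{h_{1w}^c(\M)}$, as a \emph{single} algebraic $h_{1w}^c$-atom through the explicit factorization $f=\sum_{l\geq1}\alpha_l\beta_l$ with $\alpha_l=\sum_{n\geq l+1}df_n\,s_n^{-1}(s_{l+1}-s_l)^{1/2}$ and $\beta_l=(s_{l+1}-s_l)^{1/2}$; the required bounds $\sum_l\|\alpha_l\|_{L_2^w(\M)}^2\lesssim\|f\|_{h_{1w}^c(\M)}$ and $\big\|(\sum_l|\beta_l|^2)^{1/2}\big\|_{L_2^w(\M)}^2=\|f\|_{h_{1w}^c(\M)}$ follow from Lemma \ref{L2} and Lemma \ref{usefullem}, and Remark \ref{indspaces} (proved in the appendix) converts algebraic atoms into $(1,2)_c^w$-atoms. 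If you wish to salvage your scheme, you would have to build the full threshold machinery (nested substitutes for the Cuculescu projections and a genuine distributional comparison); the algebraic factorization is the standard device for bypassing exactly these difficulties.
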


To prove Theorem \ref{atomic}, we first introduce the UMD spaces. Let $(\Omega,\F,(\F_n)_{n\geq 1},\mu)$ be a filtered $\sigma$-finite measure space. A Banach space $\mathbb{X}$ is a UMD (Unconditional for Martingale Differences) space if for some (equivalently, all) $p\in (1,\infty)$, there is a constant $C_{p}$ such that for any martingale difference $(df_n)_{n\geq 1}$ with values in $\mathbb{X}$,
\begin{equation*}
\Big\|\sum_{n=1}^k r_ndf_n\Big\|_{L_p(\Omega;\mathbb{X})}\leq C_{p}\Big\|\sum_{n=1}^k df_n\Big\|_{L_p(\Omega;\mathbb{X})},\qquad k\geq 1,
\end{equation*}
where $(r_n)_n$ is a deterministic sequence with values in $\{1, -1\}$.

Let $\mathbb{X}$ be a UMD space and $w$ be an $A_p$ weight on $\Omega$. The following result is contained in \cite{La2017}:
if $(df_n)_{n\geq 1}$ is a martingale difference with values in $\mathbb{X}$, $(r_n)_{n\geq 1}$ is a predictable sequence of signs, then we have
\begin{equation}\label{w_transform}
 \Big\|\sum_{n=1}^k r_ndf_n\Big\|_{L_p^w(\Omega;\mathbb{X})}\lesssim [w]_{A_p}^{\max\{1/(p-1),1\}}\Big\|\sum_{n=1}^k df_n\Big\|_{L_p^w(\Omega;\mathbb{X})}, k\geq 1.
\end{equation}
Here $\|f\|_{L_p^w(\Omega;\mathbb{X})}=\left(\int_\Omega \|f\|_\mathbb{X}^pw d\mu\right)^{1/p}$. 

\begin{lemma}\label{L2}
Given a weight $w\in A_2$ and $f\in L_2^w(\M)$, the following inequalities
\begin{equation*}
\big\|(df_n)_{n\geq l}\big\|_{L_2^w(\M;\ell_2)}^2\approx
\Big\|\sum_{n\geq l}df_n\Big\|_{L_2^w(\M)}^2
\end{equation*}
holds for every integer $l\geq 1$.
\end{lemma}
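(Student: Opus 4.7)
My plan is to deduce the equivalence from the weighted martingale transform inequality \eqref{w_transform} combined with the equality case of the noncommutative Khintchine inequality at $p=2$. The key structural observation is the isometric identification
$$L_2^w(\M) = L_2^w\big(\R^d; L_2(\mathcal N)\big),$$
since $\|f\|_{L_2^w(\M)}^2 = \int_{\R^d}\|f(x)\|_{L_2(\mathcal N)}^2 w(x)\,dx$, and the target space $L_2(\mathcal N)$ is a Hilbert space, hence a UMD Banach space. This places us exactly in the setting of \eqref{w_transform} with $\Omega = \R^d$, filtration $(\M_n)_{n\geq 1}$, exponent $p = 2$ and $\mathbb{X} = L_2(\mathcal N)$.

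First I would invoke the weighted noncommutative Khintchine inequality \eqref{khin2}, which at $p=2$ reduces to a genuine equality, via orthonormality of the Rademacher system combined with Fubini and the trace identity $\varphi^w(|\,\cdot\,|^2)$:
$$\|(df_n)_{n\geq l}\|_{L_2^w(\M;\ell_2)}^2 = \int_0^1 \Big\|\sum_{n\geq l}\varepsilon_n(t)\,df_n\Big\|_{L_2^w(\M)}^2 dt.$$
Then, for each fixed $t \in (0,1)$, the sequence $(\varepsilon_n(t))_{n\geq 1}$ is a deterministic, and therefore trivially predictable, sequence of signs with respect to the dyadic filtration. Applying \eqref{w_transform} with $p=2$ to finite truncations $\sum_{n=l}^{l+N}$ and letting $N \to \infty$ (which is justified since $f \in L_2^w(\M)$ guarantees $L_2^w(\M)$-convergence of $\sum_{n\geq 1}df_n$) gives
$$\Big\|\sum_{n\geq l}\varepsilon_n(t)\,df_n\Big\|_{L_2^w(\M)} \lesssim [w]_{A_2}\Big\|\sum_{n\geq l}df_n\Big\|_{L_2^w(\M)}.$$
The reverse inequality is obtained symmetrically: since $\varepsilon_n(t)^2 = 1$, the sequence $(\varepsilon_n(t)\,df_n)_n$ is again a martingale difference, and a second application of \eqref{w_transform} with the same multipliers $(\varepsilon_n(t))$ reconstructs $(df_n)_n$. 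Squaring, integrating over $t\in(0,1)$, and combining with the Khintchine equality displayed above yields the claimed two-sided bound with constants depending only on $[w]_{A_2}$.

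I do not anticipate a serious obstacle. The main conceptual step is recognizing that a norm equivalence for noncommutative martingale square functions reduces to a purely vector-valued $L_2^w$ statement whose target is Hilbertian, at which point the ambient weighted UMD transform inequality applies directly. The only technical points requiring care are the equality case of Khintchine at $p=2$ (immediate from orthogonality of Rademachers and linearity of the weighted trace) and the routine truncation argument needed to pass from finite to infinite sums in \eqref{w_transform}.
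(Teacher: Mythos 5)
Your proposal is correct and follows essentially the same route as the paper: identify $L_2^w(\M)$ with $L_2^w(\R^d;L_2(\N))$, use that $L_2(\N)$ is UMD, apply the weighted martingale transform bound \eqref{w_transform} at $p=2$ for each fixed sign pattern, exploit $\varepsilon_n(t)^2=1$ for the reverse direction, and conclude via the $p=2$ Khintchine equality \eqref{khin2}. The only difference is that you spell out the truncation/limiting step, which the paper leaves implicit.
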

\begin{proof}
{Note that $L_2(\mathcal{N})$ is a UMD space. It follows from \eqref{w_transform} that
\begin{align*}
\Big\|\sum_{n\geq l}  r_ndf_n\Big\|_{L_2(\A^w)}^2&=\int_0^1 \Big\|\sum_{n\geq l} r_ndf_n\Big\|_{L_2^w(\R^d;{L_2(\N)})}^2dt \\&
\lesssim[w]_{A_2}^2
\int_0^1 \Big\|\sum_{n\geq l} df_n\Big\|_{L_2^w(\R^d;{L_2(\N)})}^2dt\\&
=\Big\|\sum_{n \geq l} df_n\Big\|_{L_2(\A^w)}^2,
\end{align*}
}
where $\mathcal{A}^w=L_{\infty}(0,1) \bar{\otimes} \mathcal{M}$ equipped with the trace $\widetilde{\varphi^w}=\int_0^1 \otimes \varphi^w$ and {$(r_n(t))_n$} is a deterministic sequence with values in $\{1, -1\}$. Then, combining with \eqref{khin2}, we have
\begin{align*}
\big\|(df_n)_{n\geq l}\big\|_{L_2^w(\M;\ell_2)}^2&=\Big\|\sum_{n\geq l}  r_ndf_n\Big\|_{L_2(\A^w)}^2\lesssim \Big\|\sum_{n \geq l}  df_n\Big\|_{L_2(\A^w)}^2\\&
=\Big\|\sum_{n \geq l}  r_n^2df_n\Big\|_{L_2(\A^w)}^2
\lesssim \Big\|\sum_{k\geq l}  r_ndf_n\Big\|_{L_2(\A^w)}^2\\&
=\Big\|(df_n)_{n\geq l}\Big\|_{L_2^w(\M;\ell_2)}^2.
\end{align*}
The assertion follows.
\end{proof}

\begin{proof}[Proof of Theorem \ref{atomic}]
We only consider the column case here since the proof of the row one is identical. Assume that $a$ is an $(1,2)_c^w$ atom, and $e\in \M_n$ is the projection associated with $a$ in Definition \ref{def-atom}. Obviously, $\E_1(a)=\E_1(\E_n(a))=0$. Let $a_k=\E_k(a)$, $k\geq 1$. It was shown in \cite[Proposition 2.2]{BCPY2010} that
$$
eS_c(a)=S_c(a)=S_c(a)e.
$$
Therefore, by the H\"{o}lder inequality and Lemma \ref{L2}, we get
\begin{align*}
\|a\|_{H_{1w}^c(\mathcal{M})}&=\varphi(S_c(a)ew)\leq
\|S_c(a)\|_{L_2^w(\M)}\|e\|_{L_2^w(\M)}\\&\approx\|a\|_{L_2^w(\M)}\varphi(ew)^{1/2}\leq 1,
\end{align*}
which implies that
$$
 H_{1,\rm{at}}^{cw}(\M)\subset H_{1w}^{0,c}(\M).
$$

Conversely, according to Lemma \ref{use-lem}, it suffices to prove the following inequality
\begin{equation}\label{atomic-converse}
\|f\|_{H_{1,\rm{at}}^{cw}(\mathcal{M})}\lesssim \|f\|_{h_{1w}^{c}(\mathcal{M})}
\end{equation}
holds for every martingale $f=(f_n)_{n\geq 1}$ in $h_{1w}^{0,c}(\M)$. Without loss of generality, we may assume that $s_{c,n}(f)$ are invertible with bounded inverses. For the sake of convenience, we simply write $s_n$ for $s_{c,n}(f)$. Then, we rewrite

$$
\begin{aligned}
f & =\sum_{n \geq 2} d f_n s_n^{-1} s_n=\sum_{n \geq 2} d f_n s_n^{-1}
\Big(\sum_{2 \leq j \leq n}\left(s_j-s_{j-1}\right)\Big) \\
& = \sum_{n\geq 2}\sum_{1\leq l\leq n-1}df_ns_n^{-1}(s_{l+1}-s_{l})=\sum_{l\geq 1}\sum_{n\geq l+1}df_ns_n^{-1}(s_{l+1}-s_l),
\end{aligned}
$$
with the convention that $s_1=0$. Set
$$
\left\{\begin{aligned}
&\alpha_l  :=\sum_{n \geq l+1} d f_n s_n^{-1}\left(s_{l+1}-s_l\right)^{1 / 2}\\ 
&\beta_l  :=\left(s_{l+1}-s_l\right)^{1 / 2}, \end{aligned}\right.
$$
for each $l\geq 1$. Then, $f$ can be expressed as
$$
f=\sum_{l \geq 1} \alpha_l \beta_l .
$$
Due to the predictability of $(s_n)_{n\geq 1}$, it can be easily confirmed that $\E_l(\alpha_l)=0$ and $\beta_l\in L_2^w(\M_l)$ for each $l\geq 1$.
Next, we estimate the sequences $(\alpha_l)_{l\geq 1}$ and $(\beta_l)_{l\geq 1}$ respectively.

To deal with the sequence $(\alpha_l)_{l\geq 1}$. Note that for any fixed $l$ and all $n\geq l+1$, we have
$$\E_{l-1}\big( d f_n s_n^{-1}\left(s_{l+1}-s_l\right)^{1 / 2}\big)=0.$$
For each $l\geq1$, set $\gamma_n=d f_n s_n^{-1}\left(s_{l+1}-s_l\right)^{1 / 2}$. Then the sequence $ (\gamma_n)_{n\geq l+1}$ can be regarded as a sequence of martingale differences.
It follows from Lemma \ref{L2} that
\begin{align*}
{\|\alpha_l\|}_{L_2^w(\M)}^2&=\Big\|\sum_{n\geq l+1}d f_n s_n^{-1}\left(s_{l+1}-s_l\right)^{1 / 2}\Big\|^{2}_{L_2^w(\M)}\\&
=\Big\|\sum_{n \geq l+1} \gamma_n\Big\|_{L_2^w(\M)}^2
\lesssim\sum_{n\geq l+1}\|\gamma_n\|_{L_2^w(\M)}^2.
\end{align*}
Then, we arrive at
$$
\begin{aligned}
\sum_{l\geq 1}{\|\alpha_l\|}_{L_2^w(\M)}^2&\lesssim
\sum_{l\geq 1}\sum_{n \geq l+1}\Big\| d f_n s_n^{-1}\left(s_{l+1}-s_l\right)^{1 / 2}\Big\|_{L_2^w(\M)}^2\\
&=\sum_{l\geq 1}\sum_{n \geq l+1} \varphi\big(|df_n|^2s_n^{-1} (s_{l+1}-s_l)s_n^{-1}w\big)\\
&=\sum_{n\geq2}\sum_{l=1}^{n-1}
\varphi^w\big(|df_n|^2s_n^{-1} (s_{l+1}-s_l)s_n^{-1}\big)\\
&=\sum_{n\geq 2}\varphi^w\big(s_n^{-1}(s_n^2-s_{n-1}^2)\big)\lesssim
\sum_{n\geq 2}\varphi^w(s_n-s_{n-1})={\|f\|}_{h^c_{1w}(\mathcal{M})},
\end{aligned}
$$
where the last inequality is due to Lemma \ref{usefullem}. Clearly, we have
$$
\begin{aligned}
\Big\|\Big(\sum_{l\geq 1}|\beta_l|^2\Big)^{1/2}\Big\|_{L_2^w(\M)}^2&=
\varphi\big((\sum_{l\geq 1}|\beta_l|^2)w\big)\\
&=\sum_{l\geq 1}\varphi\big(\left(s_{l+1}-s_l\right)w\big)={\|f\|}_{h_{1w}^c(\mathcal{M})}.
\end{aligned}
$$
This leads to
\begin{equation*}\label{beta}
\Big\|\Big(\sum_{l\geq 1}|\beta_l|^2\Big)^{1/2}\Big\|_{L_2^w(\M)}^2\lesssim{\|f\|}_{h_{1w}^c(\mathcal{M})}.
\end{equation*}
Thus, we conclude that
$$
\Big(\sum_{l\geq 1}{\|\alpha_l\|}_{L_2^w(\M)}^2\Big)^{1/2}\Big\|\Big(\sum_{l\geq 1}|\beta_l|^2\Big)^{1/2}\Big\|_{L_2^w(\M)}\lesssim {\|f\|}_{h_{1w}^c(\mathcal{M})}.
$$
This means that there exists a proper constant $\lambda=C{\|f\|}_{h_{1w}^c(\mathcal{M})}$ ($C>0$ and it only depends on the weight $w$) such that $g=\lambda^{-1}f$ is an algebraic $h_{1w}^c$-atom. Therefore, according to \eqref{equiv-atomspaces}, we conclude \eqref{atomic-converse} and finish the proof.
\end{proof}

\subsection{Proof of Theorem \ref{HL1}} Applying the atomic decomposition theorem established in the previous subsection, we prove Theorem \ref{HL1}.
\begin{proof}[Proof of Theorem \ref{HL1}]
We only prove the column case here as the row version can be shown identically. According to Theorem \ref{atomic}, it suffices to show that
\begin{equation}\label{Ta}
\|Ta\|_{L_1^w(\M;\ell_2^c)}:=\Big\|\sum_{n\geq1} T_n(a)\otimes e_{n,1}\Big\|_{L_1^w(\M\overline{\otimes} B(\ell_2))}\lesssim 1
\end{equation}
holds for each column $(1,2)_{c}^w$-atom $a$. Without loss of generality, we may assume that the given atom $a$ associated with $k$ and $e\in \mathcal{P}(\M_k)$.
As mentioned in Remark \ref{use-rem}, we may write
\begin{equation}\label{e}
e=\sum_{Q\in D_k}p_Q\chi_Q,\quad p_Q\in \mathcal{P}(\N).
\end{equation}
and
$$T_n(a)=T_n(a)\eta+T_n(a)\eta^{\perp},$$
where
$$\eta =\bigvee_{Q\in D_k} p_Q\chi_{5Q},\quad \eta^\perp=\1_\M-\eta.$$
Then
$$
\|Ta\|_{L_1^w(\M;\ell_2^c)}\leq \mathrm{I}+\mathrm{II},
$$
where
$$\mathrm{I}=\Big\|\sum_{n\geq1} T_n(a)\eta\otimes e_{n,1}\Big\|_{L_1^w(\M\overline{\otimes} B(\ell_2))},\quad
\mathrm{II}=\Big\|\sum_{n\geq1} T_n(a)\eta^{\perp}\otimes e_{n,1}\Big\|_{L_1^w(\M\overline{\otimes} B(\ell_2))}.$$
We now show $\mathrm{I}\lesssim 1$ and $\mathrm{II}\lesssim 1$, respectively.

We first estimate $\mathrm{I}$. Note that 
$$
\begin{aligned}
\mathrm{I}&=\Big\|\sum_{n\geq1} T_n(a)\eta\otimes e_{n,1}\Big\|_{L_1^w(\M\overline{\otimes} B(\ell_2))}\\
&=\Big\|\Big(\sum_{n\geq1} \eta |T_n(a)|^2 \eta\Big)^{1/2}\Big\|_{L_1^w(\M)}=\Big\|\Big(\sum_{n\geq1} \eta |T_n(a)|^2 \eta\Big)^{1/2}\eta\Big\|_{L_1^w(\M)}\\
&\leq \Big\|\Big(\sum_{n\geq1} \eta |T_n(a)|^2 \eta\Big)^{1/2}\Big\|_{L_2^w(\M)}\cdot \|\eta\|_{L_2^w(\M)}.
\end{aligned}
$$
According to the assumption that $T:L_2^w(\M)\to L_2^w(\M;\ell_2^{c})$ is bounded, we know
$$
\begin{aligned}
\Big\|\Big(\sum_{n\geq1} \eta |T_n(a)|^2 \eta\Big)^{1/2}\Big\|_{L_2^w(\M)}&=\Big\|\eta\Big(\sum_{n\geq1}  |T_n(a)|^2\Big) \eta\Big\|_{L_1^w(\M)}^{1/2}\\
&\leq \Big\|  \sum_{n\geq1}  |T_n(a)|^2 \Big\|_{L_1^w(\M)}^{1/2}\lesssim \|a\|_{L_2^w(\M)}\leq \varphi(ew)^{-1/2},
\end{aligned}
$$
where we used the fact $\|a\|_{L_2^w(\M)}\leq \varphi(ew)^{-1/2}$ as in Definition \ref{def-atom}.
Then, by \eqref{eta}, we obtain
$$\mathrm{I}\lesssim \varphi(ew)^{-1/2}\|\eta \|_{L_2^w(\M)}\lesssim 1.$$

We now turn to estimate $\mathrm{II}$.
We write
$$
a=\sum_{Q\in D_k}a\chi_Q=\sum_{Q\in D_k}a\cdot p_Q\chi_Q.
$$
Then
$$
\begin{aligned}
\mathrm{II}&=\Big\|\sum_{n\geq1} T_n(a)\eta^{\perp}\otimes e_{n,1}\Big\|_{L_1^w(\M\overline{\otimes} B(\ell_2))}\\
&\leq \sum_{Q\in D_k}\Big\|\sum_{n\geq1} T_n(a\chi_Q)\eta^{\perp}\otimes e_{n,1}\Big\|_{L_1^w(\M\overline{\otimes} B(\ell_2))}\\
&=:\sum_{Q\in D_k} \mathrm{II}_Q.
\end{aligned}
$$
We claim that for each fixed $Q\in D_k$,
\begin{equation}\label{claim1}
\mathrm{II}_Q\lesssim \|a\chi_{Q}\|_{L_1^w(\M)}.
\end{equation}
Obviously, if the claim is verified, then by \eqref{aL1}, we arrive at
$$\mathrm{II}\lesssim \sum_{Q\in D_k} \|a\chi_{Q}\|_{L_1^w(\M)}= \|a\|_{L_1^w(\M)}\leq 1.$$
It remains to check the claim \eqref{claim1}.  Fix $Q\in D_k$. Then
\begin{align}\label{IIQ}
\mathrm{II}_Q&=\Big\|\Big(\sum_{n\geq1}\eta^{\perp}| T_n(a\chi_Q)|^2\eta^{\perp}\Big)^{1/2}\Big\|_{L_1^w(\M)}\nonumber\\
&=\Big\|\Big(\sum_{n\geq1}\eta^{\perp}|T_n(a\chi_Q)|^2\eta^{\perp}\Big)^{1/2}\chi_{\R^d\setminus 5Q}\Big\|_{L_1^w(\M)}=:\mathrm{II}_{Q,2}.
\end{align}
To see the second equality, noting that by \eqref{a}, we have $a\chi_Q=a\cdot p_Q\chi_Q$. This implies
$$|T_n(a\chi_Q)|^2=p_Q|T_n(a\chi_Q)|^2p_Q.$$
Clearly $\eta(x)\geq p_Q$ provided $x\in 5Q$. Then, for $x\in 5Q$, $\eta^{\perp}(x)\leq p_Q^{\perp}$, and \eqref{IIQ} follows. Thus, 
\begin{align*}
\mathrm{II}_{Q,2}&=\int_{\R^d\setminus 5Q}\otimes\nu\otimes \mathrm{tr} \Big|\sum_{n\geq1} \int_QK_n(x,y)a(y)dy\otimes e_{n,1}\Big|w(x)dx\\
&=\int_{\R^d\setminus 5Q}\otimes\nu\otimes \mathrm{tr} \Big|\sum_{n\geq1} \int_Q[K_n(x,y)-K_n(x,c_Q)]a(y)dy\otimes e_{n,1}\Big|w(x)dx\\
&=\int_{\R^d\setminus 5Q} \Big\|\int_Q \sum_{n\geq1}K_{n,Q}(x,y)a(y)\otimes e_{n,1}dy\Big\|_{L_1(\N\overline{\otimes}B(\ell_2))}w(x)dx\\
&\leq \int_{\R^d\setminus 5Q} \int_Q\Big\| \sum_{n\geq1}K_{n,Q}(x,y)a(y)\otimes e_{n,1}\Big\|_{L_1(\N\overline{\otimes}B(\ell_2))}dyw(x)dx\\
&=\int_{Q} \|a(y)\|_{L_1(\N)}\int_{\R^d\setminus 5Q} \Big(\sum_{n\geq1}|K_{n,Q}(x,y)|^2\Big)^{1/2}w(x)dx dy\\
&\lesssim \|a\chi_{Q}\|_{L_1^w(\M)},
\end{align*}
where $K_{n,Q}(x,y)=K_n(x,y)-K_n(x,c_Q)$ for each $n\geq 1$, $c_Q$ is the centre of the fixed cube $Q$, and $\mathrm{tr}$ is the standard trace on $B(\ell_2)$. The first inequality above is referred to \cite[Proposition 1.2.2]{HvanNVW2016}, and the last inequality follows from Lemma \ref{keylemma-square} with $p=1$ as the kernel $\vec{K}$ satisfies \eqref{Lr-Hc-square} with $r=r_w'$. The claim \eqref{claim1} is verified.

Combining the estimates of $\mathrm{I}$ and $\mathrm{II}$, we get \eqref{Ta}, and hence the proof is finished.
\end{proof}

\begin{remark}
It is natural to ask whether the martingale Hardy space $H_1^w(\M)$ in Theorem \ref{HL1} can be replaced by the weighted Hardy space $H_1^w(\R,\M)$.  This is not clear to us at this moment. 
\end{remark}

\medskip
\noindent{\bf Acknowledgement }
The authors would like to thank Professor Tao Mei for his valuable discussions and explanations at the time of this writing.

\appendix
\section{Proof of Remark \ref{indspaces}}\label{appendix}
To verify the equivalence of two types of weighted atomic Hardy spaces introduced in Section \ref{atomic-hardy}, we need to introduce another type of weighted atomic spaces, which can be considered as weighted version of the crude type atoms introduced in \cite[Definition 4.1]{HM2012}.
\begin{definition}\label{def-crude-atom}
An element $a\in L_1^w(\M)$ is said to be an $(1,2)_{c}^w$-crude atom with respect to the filtration $(\M_k)_{k\geq1}$ and a weight $w$ if there exist $k\geq1$ and a factorization $a=yb$ such that
	\begin{enumerate}[{\rm (1)}]
		\item $y\in L_2^w(\M)$, $\E_k(y)=0$ and $\|y\|_{L_2^w(\M)}\leq 1$;
		\item $b\in L_2^w(\M_k)$ with $ \|b\|_{L_2^w(\M)}\leq 1$.	\end{enumerate}
Replacing the factorizations above by $a=by$, we have the notion of $(1,2)_{r}^w$-crude atoms.
Also, we define the associated weighted atomic spaces by substituting $ H_{1,\rm{at}}^{cw}(\M)$ with $ H_{1,\rm{crude}}^{cw}(\M)$ in Definition \ref{def-atomic-spaces}.
\end{definition}

The following lemmas can be deduced from \cite[Remark 3.7, Proposition 3.9]{CRX2023}. We include the details here for the sake of completeness.
\begin{lemma}\label{equiv1}
Every algebraic $h_{1w}^c$-atom $z$ can be rewritten as
$$z=\sum_{k}\lambda_k b_k,$$
where $b_k$'s are $(1,2)_c^w$-crude atoms and $\sum_{k}|\lambda_k|\leq 1$. Consequently, we have
$$
H_{1,\rm{aa}}^{cw}(\M)\subseteq H_{1,\rm{crude}}^{cw}(\M).
$$
\end{lemma}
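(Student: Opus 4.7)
The plan is to take an algebraic $h_{1w}^c$-atom and rewrite its defining factorization $z = \sum_{k\geq 1} a_k b_k$ as a sum of properly normalized rank-one products. Each individual term $a_k b_k$ is already \emph{shaped} like a crude atom (Definition \ref{def-crude-atom}): $a_k$ satisfies the cancellation $\E_k(a_k)=0$, and $b_k \in L_2^w(\M_k)$. The only thing missing is that the individual $L_2^w$-norms of $a_k$ and $b_k$ are not separately bounded by $1$; they are only controlled through the collective square-sum conditions from Definition \ref{def-alge-atom}.

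First I would set $\lambda_k := \|a_k\|_{L_2^w(\M)}\,\|b_k\|_{L_2^w(\M)}$ and, whenever $\lambda_k > 0$, define the normalized pieces
\begin{equation*}
\widetilde{a}_k := \frac{a_k}{\|a_k\|_{L_2^w(\M)}}, \qquad \widetilde{b}_k := \frac{b_k}{\|b_k\|_{L_2^w(\M)}},
\end{equation*}
(with $\widetilde{a}_k \widetilde{b}_k := 0$ if $\lambda_k = 0$). Then $z = \sum_k \lambda_k\,\widetilde{a}_k \widetilde{b}_k$, and by construction each $\widetilde{a}_k \widetilde{b}_k$ is a $(1,2)_c^w$-crude atom associated to the level $k$: conditions (1)--(2) of Definition \ref{def-crude-atom} hold because $\E_k(\widetilde{a}_k)=\E_k(a_k)/\|a_k\|_{L_2^w(\M)}=0$, $\widetilde{b}_k \in L_2^w(\M_k)$, and both have unit $L_2^w$-norm.

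The remaining point is to verify $\sum_k \lambda_k \le 1$. Here I would apply the Cauchy--Schwarz inequality on $\ell_2$:
\begin{equation*}
\sum_{k} \lambda_k = \sum_{k}\|a_k\|_{L_2^w(\M)}\|b_k\|_{L_2^w(\M)} \leq \Big(\sum_{k}\|a_k\|_{L_2^w(\M)}^2\Big)^{1/2}\Big(\sum_{k}\|b_k\|_{L_2^w(\M)}^2\Big)^{1/2}.
\end{equation*}
The first factor is at most $1$ by condition (1) of Definition \ref{def-alge-atom}. For the second factor, I would use the tracial identity
\begin{equation*}
\sum_k \|b_k\|_{L_2^w(\M)}^2 = \sum_k \varphi(|b_k|^2 w) = \varphi\Big(\big(\sum_k |b_k|^2\big) w\Big) = \Big\|\big(\sum_k |b_k|^2\big)^{1/2}\Big\|_{L_2^w(\M)}^2 \le 1,
\end{equation*}
which is exactly condition (2) of Definition \ref{def-alge-atom}. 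Combining gives $\sum_k \lambda_k \le 1$, establishing the decomposition; the inclusion $H_{1,\rm{aa}}^{cw}(\M)\subseteq H_{1,\rm{crude}}^{cw}(\M)$ is then immediate from the definitions of the norms. There is no real obstacle here; the argument is essentially a normalization plus one application of Cauchy--Schwarz, and the only thing to be slightly careful about is handling the zero-norm terms so that the normalization is well-defined.
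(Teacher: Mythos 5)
Your proposal is correct and follows essentially the same route as the paper's proof: normalize each product $a_k b_k$ by $\lambda_k=\|a_k\|_{L_2^w(\M)}\|b_k\|_{L_2^w(\M)}$ to obtain crude atoms, then bound $\sum_k\lambda_k$ by Cauchy--Schwarz together with the identity $\sum_k\|b_k\|_{L_2^w(\M)}^2=\big\|\big(\sum_k|b_k|^2\big)^{1/2}\big\|_{L_2^w(\M)}^2\le 1$. Your explicit handling of the zero-norm terms is a minor tidy-up the paper leaves implicit.
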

\begin{proof}
Assume that $z$ is an algebraic $h_{1w}^c$-atom. Then it can be decomposed as
 $$z=\sum_{k\geq1}a_kb_k,$$
with $(a_k)_k$ and $(b_k)_k$ as in Definition \ref{def-alge-atom}. Let
$$
y_k=\frac{a_kb_k}{\|a_k\|_{L_2^w(\M)}\|b_k\|_{L_2^w(\M)}},
$$
and let
$$
\lambda_k=\|a_k\|_{L_2^w(\M)}\|b_k\|_{L_2^w(\M)}.
$$	
Then, for each $k$, it is evident that $y_k$ is an $(1,2)_c^w$-crude atom and $z$ can be rewritten as
$$
z=\sum_{k}\lambda_k y_k.
$$
By the H\"{o}lder inequality, we arrive at
\begin{align*}
\sum_k\lambda_k&=\sum_k\|a_k\|_{L_2^w(\M)}\|b_k\|_{L_2^w(\M)}\\
&\leq\Big(\sum_k\|a_k\|_{L_2^w(\M)}^2\Big)^{1/2}
\Big(\sum_k\|b_k\|_{L_2^w(\M)}^2\Big)^{1/2}\\
&\leq \Big\|\big(\sum_{k\geq 1}|b_k|^2\big)^{1/2}\Big\|_{L_2^w(\M)}\leq 1.
\end{align*}
This completes the proof.
\end{proof}

\begin{lemma}\label{equiv2}
Every $(1,2)_c^w$-crude atom $y$ can be decomposed as
$$y=\sum_{k}\lambda_k y_k,$$
where $y_k$'s are $(1,2)_c^w$-atoms and $\sum_{k}|\lambda_k|\leq 1$. Consequently, we have
$$
H_{1,\rm{crude}}^{cw}(\M)\subseteq H_{1,\rm{at}}^{cw}(\M).
$$
\end{lemma}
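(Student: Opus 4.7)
The plan is to slice the given crude atom along dyadic level sets of its $\M_k$-measurable factor. By Definition~\ref{def-crude-atom}, there exist $k \geq 1$ and a factorization $y = xb$ with $x \in L_2^w(\M)$ satisfying $\E_k(x) = 0$, $\|x\|_{L_2^w(\M)} \leq 1$, and $b \in L_2^w(\M_k)$ with $\|b\|_{L_2^w(\M)} \leq 1$. I will take the polar decomposition $b = u|b|$; since $b$ is affiliated with $\M_k$, both $u$ and $|b|$ are affiliated with $\M_k$ and therefore commute with $w$. For each $n \in \Z$, set
$$
e_n := \chi_{(2^{n-1},\,2^n]}(|b|) \in \mathcal{P}(\M_k), \qquad y_n := xbe_n,
$$
so that $y = \sum_{n\in \Z} y_n$. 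Each $y_n$ fulfils the two structural axioms for a column atom at level $k$ with associated projection $e_n$: one has $y_n e_n = y_n$ tautologically, while $\E_k(y_n) = \E_k(x)\cdot (be_n) = 0$ because $be_n \in \M_k$. Setting $\lambda_n := \|y_n\|_{L_2^w(\M)}\,\varphi(e_n w)^{1/2}$ and $a_n := \lambda_n^{-1} y_n$ whenever $\lambda_n > 0$ then makes each $a_n$ a genuine $(1,2)_c^w$-atom and yields the decomposition $y = \sum_n \lambda_n a_n$.

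The crux is to bound $\sum_n \lambda_n$ by an absolute constant. Applying the Cauchy--Schwarz inequality with the weight pair $(2^{-n}, 2^{n})$,
$$
\sum_n \lambda_n \leq \Big(\sum_n 2^{-2n}\|y_n\|_{L_2^w(\M)}^2\Big)^{1/2}\Big(\sum_n 2^{2n}\varphi(e_n w)\Big)^{1/2}.
$$
For the second factor, the spectral estimate $|b|^2 e_n \geq 2^{2(n-1)} e_n$, orthogonality of $\{e_n\}$, and the commutation of $|b|^2, e_n$ with $w$ give
$$
\sum_n 2^{2n}\varphi(e_n w) \leq 4\,\varphi(|b|^2 w) = 4\|b\|_{L_2^w(\M)}^2 \leq 4.
$$
For the first factor, trace cyclicity together with the commutation of $w$ with the $\M_k$-affiliated objects $b,b^*$ rewrites $\|y_n\|_{L_2^w(\M)}^2 = \varphi(x^* x\,be_n b^*\,w)$; the identity
$$
be_n b^* = u\,|b|\,e_n\,|b|\,u^* = u\,e_n|b|^2\,u^* \leq 2^{2n}\,u e_n u^*,
$$
available because $|b|$ commutes with its own spectral projection $e_n$, yields
$$
2^{-2n}\|y_n\|_{L_2^w(\M)}^2 \leq \varphi(x^* x\, u e_n u^*\, w) = \|\widetilde x\, e_n\|_{L_2^w(\M)}^2, \qquad \widetilde x := xu.
$$
Since $u \in \M_k$ commutes with $w$ and $uu^* \leq \mathbf{1}_\M$, one has $\|\widetilde x\|_{L_2^w(\M)} \leq \|x\|_{L_2^w(\M)} \leq 1$; summing in $n$ and invoking $\sum_n e_n \leq \mathbf{1}_\M$ then bounds the first factor by $1$ as well. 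Combining the two gives $\sum_n \lambda_n \leq 2$, and the inclusion $H_{1,\mathrm{crude}}^{cw}(\M) \subseteq H_{1,\mathrm{at}}^{cw}(\M)$ follows (with an absolute constant, absorbed by the ambient normalisation of the atomic norm).

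The main technical obstacle is the careful bookkeeping of commutations inside the weighted trace: the outer factor $x$ does not commute with $w$, so one may only push $w$ past the inner $\M_k$-affiliated pieces $b$, $b^*$, $u$, $|b|$, $e_n$, $uu^*$. The polar-decomposition trick — replacing the a priori unwieldy positive operator $be_n b^*$ by the projection bound $2^{2n}\, u e_n u^*$ — is precisely what linearises the estimate and converts it into the clean weighted Cauchy--Schwarz identity above; everything else reduces to routine spectral theory.
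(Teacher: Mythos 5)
Your proposal is correct and follows essentially the same route as the paper: slice the $\M_k$-measurable factor by spectral projections over geometric level sets, renormalise each slice into a $(1,2)_c^w$-atom, and control $\sum_n\lambda_n$ by a weighted Cauchy--Schwarz argument; your explicit polar decomposition $b=u|b|$ simply substantiates the paper's one-line reduction ``we may assume $b\geq 0$''. The only substantive deviation is quantitative: with fixed dyadic cuts you obtain $\sum_n\lambda_n\leq 2$, whereas the lemma asserts $\sum_k|\lambda_k|\leq 1$; to match the stated bound, replace the thresholds $2^n$ by $l^n$ and let $l\to 1^+$ exactly as the paper does (your estimates go through verbatim, yielding $\sum_n\lambda_n\leq l$ for every $l>1$). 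One harmless slip: the weight $w\otimes\mathbf{1}_{\mathcal N}$ is central in $\M$, so your caveat that the outer factor $x$ does not commute with $w$ is unnecessary (and in fact false), though your proof never relies on it.
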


\begin{proof}
Let $y$ be a $(1,2)_c^w$-crude atom. Then there exists $n\geq 1$ such that $y=ab$ with $\|a\|_{L_2^w(\M)}\leq 1$ and $b\in L_2^w(\M_n)$, $\|b\|_{L_2^w(\M)}\leq 1$. We may assume that $b\geq 0$. For any fixed $l>1$, we consider the sequence of mutually disjoint projections $(e_k)_{k\in\mathbb{Z}}$ in $\M_n$ defined by
$$
e_k=\chi_{[l^k,l^{k+1})}(b).
$$
By the definition of spectral projection, we have $l^k e_k<e_kbe_k\leq l^{k+1}e_k $. For each $k\in\Z$, define $\lambda_k=\|abe_k\|_{L_2^w(\M)}\varphi(e_kw)^{1/2}$ and
$$
y_k=\varphi(e_kw)^{-1/2}\frac{abe_k}{\|abe_k\|_{L_2^w(\M)}}.
$$
Then, each $y_k$ becomes a $(1,2)_c^w$-atom and $y=\sum_k\lambda_ky_k$. Moreover, we estimate
$$
\|abe_k\|_{L_2^w(\M)}^2=\|a(e_kbe_k)\|_{L_2^w(\M)}^2\leq l^{2(k+1)}\|ae_k\|_{L_2^w(\M)}^2,
$$
and
$$
\|b\|_{L_2^w(\M)}^2=\Big\|\sum_{k\in\Z}e_kbe_k\Big\|_{L_2^w(\M)}^2\geq\Big\|\sum_{k\in\Z}l^ke_k\Big\|_{L_2^w(\M)}^2=\sum_{k\in\Z}l^{2k}\varphi(e_kw).
$$
Thus, combining the above estimates with the H\"{o}lder inequality, we have
\begin{align*}
\sum_k\lambda_k&
\leq \Big(\sum_k l^{2k}\varphi(e_kw)\Big)^{1/2}
\Big(\sum_k l^{-2k}\|abe_k\|_{L_2^w(\M)}^2\Big)^{1/2}\\
&\leq l\|b\|_{L_2^w(\M)}\Big(\sum_k\|ae_k\|_{L_2^w(\M)}^2\Big)^{1/2}\leq l\|a\|_{L_2^w(\M)}\leq l.
\end{align*}
We conclude this proof by letting $l\rightarrow 1$.
\end{proof}

\begin{proof}[Proof of \eqref{equiv-atomspaces}]
Indeed, a combination of Lemma \ref{equiv1} and Lemma \ref{equiv2} yields
$$
H_{1,\rm{aa}}^{cw}(\M)\subseteq H_{1,\rm{crude}}^{cw}(\M)\subseteq H_{1,\rm{at}}^{cw}(\M) .
$$
The converse inclusions can be verified easily with simple observations that $(1,2)_{c}^w$-atoms are $(1,2)_{c}^w$-crude atoms, and $(1,2)_{c}^w$-crude atoms are algebraic $h_{1w}^c$-atoms.

\end{proof}

\bibliographystyle{amsplain}
\bibliography{weighted_CZ}

\end{document}